\newtheorem{theorem}{Theorem}[section]
\newtheorem{lemma}[theorem]{Lemma}
\newtheorem{proposition}[theorem]{Proposition}
\theoremstyle{definition}
\newtheorem{definition}[theorem]{Definition}
\newtheorem{remark}[theorem]{Remark}
\numberwithin{equation}{section}
\title{Analysis of $U_q(sl(m+1))$-symmetries on quantum $n$-spaces}
\author{Steven Duplij}
\address{Theory Group, Nuclear Physics Laboratory, V. N. Karazin Kharkov National University, 4 Svoboda Sq., Kharkov, 61022, Ukraine} 
\email{duplij@math.rutgers.edu, sduplij@gmail.com}
\urladdr{http://www.math.rutgers.edu/~duplij}
\author{Yanyong Hong}
\address{Department of Mathematics, Zhejiang University,
Hangzhou, 310027, P.R.China;\;\; The corresponding author} \email{hongyanyong2008@zju.edu.cn}
\author{Fang Li}
\address{Department of Mathematics, Zhejiang University,
Hangzhou, 310027, P.R.China} \email{fangli@zju.edu.cn}
\subjclass[2010]{81R50, 16T20, 17B37, 20G42, 16S40}
\keywords{quantum enveloping algebra, quantum space and quantum plane, Hopf action, module algebra,
 Verma representation, projection, weight}
\begin{document}

\begin{abstract}
In this paper, the module-algebra structures of $U_q(sl(m+1))$ on the quantum $n$-space $A_q(n)$ are studied. 
We characterize all module-algebra structures of $U_q(sl(m+1))$ on $A_q(2)$ and $A_q(3)$ when $m\geq 2$. 
The module-algebra structures of $U_q(sl(m+1))$ on $A_q(n)$ are also considered for any $n\geq 4$.
\end{abstract}
\maketitle

\section{Introduction}
The actions of Hopf algebras \cite{montgomery} and their generalizations (see, e.g., \cite{dra/van/zha}) play an important role in the quantum group theory
\cite{kassel,KlSch} and in its various application in physics \cite{cas/wes}. However, 
it was long time believed that the quantum plane \cite{manin2}
admits only one special symmetry \cite{mon/smi} inspired by the action of
$U_{q}( sl(  2) )  $ (in other words the $U_{q}(
sl(  2) )  $-module algebra structure \cite{kassel}). In \cite{hu2000},
the quantum $n$-space is equipped with the special
$U_q(sl(m+1))$-module algebra structure via a certain $q$-differential
operator realization.
Then it was shown \cite{dup/sin3}, that the $U_{q}(  sl(  2)
)  $-module algebra structure on the quantum plane is much more diverse
and consists of 8 nonisomorphic cases \cite{dup/sin3,dup/sin4}. The full
classification was given in terms of the introduced so-called weight. Its
introduction follows from the general form of an automorphism of the quantum
plane \cite{ale/cha}. Some properties of the actions of commutative Hopf
algebras on quantum polynomials were studied in \cite{art2000,art2002}.

Here we consider the actions of
quantum universal enveloping algebra $U_{q}(sl(m+1))$ on the quantum $n$-space
$A_{q}(  n)  $. We use the method of weights
\cite{dup/sin3,dup/sin4} to classify some actions in terms of the introduced
action matrices. Then we present the Dynkin diagrams for the obtained actions
and find their classical limit. A special case discussed in this paper was also included in \cite{hu2000}.

This work is organized as follows. In Section 2, we give the necessary
preliminary information and notation, as well as prove an important lemma about
action on generators and any elements of the quantum $n$-space. In Section 3,
we present a general idea about how to connect actions of $U_{q}( sl(
2))$ with those of $U_{q}(  sl(  m+1))  $ and characterize all the module-algebra structures of $U_q(sl(m+1))$ on the quantum plane when $m\geq 2$.  Section 4 is devoted to
classification of  $U_{q}(  sl(  m+1))  $ actions
on the quantum 3-space $A_{q}( 3) $ using the method of
weights \cite{dup/sin3,dup/sin4} for $m\geq 2$. Then we present the classical limit of the
obtained actions together with the Dynkin diagrams. In Section 5, we study the module-algebra structures of $U_q(sl(m+1))$ on the quantum $n$-space for $n\geq 4$.

\section{Preliminaries}

In this paper, all algebras, modules and linear spaces are over the field $\mathbb{C}$ of complex numbers.

Let $H$ be a Hopf algebra whose comultiplication is $\Delta$, counit is $\varepsilon$ and antipode is $S$ and let $A$ be a unital algebra with unit $1$.
Using the Sweedler notation, we set $\Delta(h)=\sum_i h_i^{'}\otimes h_i^{''}$.
\begin{definition} By a structure of $H$-\emph{module algebra} (or say, $H$-{\em symmetry}) on $A$, we mean a homomorphism $\pi: H\rightarrow End_{\mathbb{C}}A$ such that:\\
(1) $\pi(h)(ab)=\sum_i\pi(h_i^{'})(a)\pi(h_i^{''})(b)$ for all $h\in H$, $a$, $b\in A$,\\
(2) $\pi(h)(1)=\varepsilon(h)1$ for all $h\in H$.

The structures $\pi_1$, $\pi_2$ are said to be \emph{isomorphic}, if there exists an automorphism $\psi$ of $A$ such that $\psi\pi_1(h)\psi^{-1}=\pi_2(h)$ for all $h\in H$.
\end{definition}

Throughout this paper we assume that $q\in \mathbb{C}\backslash\{0\}$ and $q$ is not a root of unity.

For any integer $n$, we introduce the $q$-integer by $(n)_q=\frac{q^n-1}{q-1}.$
They were introduced by Heine \cite{heine} and are called the Heine numbers or
$q$-deformed numbers \cite{kac/che}. If $n>0$,
$$(n)_q=\frac{q^n-1}{q-1}=1+q+\cdots+q^{n-1}.$$

First, we introduce the definition of $U_q(sl(m+1))$.
\begin{definition} The quantum universal enveloping algebra $U_q(sl(m+1))$ $(m\geq 1)$ as the algebra is generated by $(e_i,f_i,k_i,k_i^{-1})_{1\leq i\leq m}$ and the relations
\begin{eqnarray}\label{w5}
k_ik_i^{-1}=k_i^{-1}k_i=1,~~k_ik_j=k_jk_i,\end{eqnarray}
\begin{eqnarray}\label{w2}
k_ie_jk_i^{-1}=q^{a_{ij}}e_j,~~k_if_jk_i^{-1}=q^{-a_{ij}}f_j, \end{eqnarray}
\begin{eqnarray}
[e_i, f_j]=\delta_{ij}\frac{k_i-k_i^{-1}}{q-q^{-1}},
\end{eqnarray}
\begin{eqnarray}
e_ie_j=e_je_i\qquad\text{and}\qquad f_if_j=f_jf_i,\qquad \text{if~~$a_{ij}=0$,}
\end{eqnarray}
if $a_{ij}=-1$,
\begin{eqnarray}
e_i^2e_j-(q+q^{-1})e_ie_je_i+e_je_i^2=0,\end{eqnarray}
\begin{eqnarray}\label{w6}
f_i^2f_j-(q+q^{-1})f_if_jf_i+f_jf_i^2=0,
\end{eqnarray}
where for any $i$, $j\in\{1,2,\cdots, m\}$, $a_{ii}=2$ and $a_{ij}=0$, if $|i-j|>1$; $a_{ij}=-1$, if $|i-j|=1$.
\end{definition}

The standard Hopf algebra structure on $U_q(sl(m+1))$ is determined by
\begin{eqnarray} \Delta(e_i)=1\otimes e_i+e_i\otimes
k_i,\end{eqnarray}
\begin{eqnarray} \Delta(f_i)=k_i^{-1}\otimes f_i+f_i\otimes
1,\end{eqnarray}
\begin{eqnarray} \Delta(k_i)=k_i\otimes k_i,\qquad \Delta(k_i^{-1})=k_i^{-1}\otimes k_i^{-1},\end{eqnarray}
\begin{eqnarray} \varepsilon(k_i)=\varepsilon(k_i^{-1})=1,\end{eqnarray}
\begin{eqnarray} \varepsilon(e_i)=\varepsilon(f_i)=0,\end{eqnarray}
\begin{eqnarray}S(e_i)=-e_ik_i^{-1},\qquad S(f_i)=-k_if_i,\end{eqnarray}
\begin{eqnarray} S(k_i)=k_i^{-1},\qquad S(k_i^{-1})=k_i,\end{eqnarray}
for $i\in \{1,2,\cdots, m\}$.

Next, we introduce the definition of quantum $n$-space
(see \cite{goo/let,art2000}).
\begin{definition}
 The \emph{quantum $n$-space} $A_q(n)$
 is a unital algebra generated by $n$ generators $x_i$ for $i\in\{1,\cdots, n\}$ satisfying the relations
\begin{eqnarray}\label{e61}x_ix_j=qx_jx_i~~~for ~~any ~~i>j.\end{eqnarray}
\end{definition}

If $n=2$, $A_q(2)$ is called  a \emph{quantum plane} (see \cite{kassel}).
In this case, Duplij and Sinel'shchikov studied the classification of $U_q(sl(2))$-module algebra structures on $A_q(2)$ in \cite{dup/sin3}.

The next proposition is very important for us to study the module-algebra structures
of $U_q(sl(m+1))$ on $A_q(n)$ (see \cite{ale/cha,art2006}).
\begin{proposition}\label{w3}
Let $\varphi$ be an automorphism of $A_q(n)$, then there exist nonzero constants $\alpha_i\in \mathbb C$ for $i\in \{1,\cdots, n\}$ such that
$$\varphi:~~x_i\rightarrow \alpha_i x_i.$$ \end{proposition}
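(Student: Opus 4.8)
The plan is to combine the graded and normal structure of $A_q(n)$ with the genericity of $q$. First I would record the standard structural facts. The defining relations $x_ix_j=qx_jx_i$ (for $i>j$) are homogeneous, so $A_q(n)$ is an iterated Ore extension; it is a Noetherian domain with the PBW basis of ordered monomials $x^a=x_1^{a_1}\cdots x_n^{a_n}$ ($a\in\mathbb{N}^n$), and it carries both the total-degree $\mathbb{N}$-grading (connected, with $A_0=\mathbb{C}$) and the finer $\mathbb{Z}^n$-multigrading with one-dimensional homogeneous components. In particular each $x^a$ is a normal element, and for every $t=(t_1,\dots,t_n)\in(\mathbb{C}^\times)^n$ the assignment $x_i\mapsto t_ix_i$ extends to an automorphism whose joint eigenspaces are exactly the multigraded components. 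A short argument with leading monomials in this grading shows that the only units of $A_q(n)$ are the nonzero scalars.

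The key reduction is to show that $\varphi(x_i)$ is a scalar multiple of a single generator. Since $\varphi$ is an algebra automorphism it carries normal elements to normal elements, so the heart of the matter is the structural fact (the content of \cite{ale/cha,art2006}) that, because $q$ is not a root of unity, every normal element of $A_q(n)$ is a scalar multiple of a monomial $x^a$. Granting this, I would write $\varphi(x_i)=\lambda_i x^{a^{(i)}}$ and $\varphi^{-1}(x_j)=\mu_j x^{b^{(j)}}$ with all $\lambda_i,\mu_j\neq0$ and $a^{(i)},b^{(j)}\in\mathbb{N}^n$. Substituting the second expression into $\varphi^{-1}(\varphi(x_i))=x_i$ and comparing multidegrees yields $\sum_k a^{(i)}_k\,b^{(k)}=e_i$; taking total degrees gives $\sum_k a^{(i)}_k|b^{(k)}|=1$. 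As each $|b^{(k)}|\geq1$ (no $x_j$ is a scalar) and the $a^{(i)}_k$ are nonnegative integers, exactly one $a^{(i)}_k$ equals $1$ and the rest vanish. Hence $\varphi(x_i)=\lambda_i x_{\tau(i)}$ for a permutation $\tau$ of $\{1,\dots,n\}$.

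It remains to force $\tau=\mathrm{id}$, which is where genericity of $q$ is used decisively. For $i>j$ the relation $\varphi(x_i)\varphi(x_j)=q\,\varphi(x_j)\varphi(x_i)$ becomes a relation between $x_{\tau(i)}$ and $x_{\tau(j)}$; if $\tau(i)<\tau(j)$ one obtains the identity $1=q^2$, which is excluded, so necessarily $\tau(i)>\tau(j)$ whenever $i>j$. Thus $\tau$ is order preserving and therefore the identity, giving $\varphi(x_i)=\lambda_i x_i$ with $\lambda_i=\alpha_i\neq0$, as claimed.

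Finally, a remark on difficulty. The two end steps — the degree bookkeeping and the order-preserving argument for $\tau$ — are routine once the framework is in place. The genuine obstacle is the structural input that every normal element is a scalar monomial; equivalently, that $\varphi$ must respect the grading rather than spreading a single $x_i$ across several degrees or introducing a constant term. I would either cite \cite{ale/cha,art2006} for this, or prove it directly by localizing at the Ore set generated by $x_1,\dots,x_n$, passing to the quantum torus whose units are exactly the scalar Laurent monomials, and using that the only units of $A_q(n)$ itself are scalars to conclude that a normal element of $A_q(n)$ cannot carry two distinct multidegree components.
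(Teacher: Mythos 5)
You are reviewing against a paper that contains no proof of this proposition: the paper simply quotes it from \cite{ale/cha,art2006}. So the only question is whether your argument is correct, and it is not, because the structural lemma that carries all the weight --- ``every normal element of $A_q(n)$ is a scalar multiple of a monomial'' --- is false for the uniparameter algebra $A_q(n)$ of this paper. Take $n=3$ and $u=x_2+\lambda x_1x_3$, $\lambda\neq 0$. Using $x_2x_1=qx_1x_2$, $x_3x_1=qx_1x_3$, $x_3x_2=qx_2x_3$ one checks directly that
\begin{equation*}
ux_1=qx_1u,\qquad ux_2=x_2u,\qquad ux_3=q^{-1}x_3u,
\end{equation*}
so $u$ is normal but not a monomial. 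The culprit is that $x_2$ and $x_1x_3$ have identical commutation factors with every generator precisely because all defining parameters are the single $q$; the monomial description of normal elements (and of automorphisms) holds for multiparameter quantum spaces with sufficiently independent $q_{ij}$, which is the actual setting of the results in \cite{ale/cha,art2006}. Your fallback derivation via localization breaks at an identifiable step: normality in the quantum torus does not imply invertibility. That implication (``$uT=Tu$ is a nonzero two-sided ideal, hence all of $T$'') needs the torus to be simple, and the uniparameter torus on $n$ generators is simple only when its center is trivial; for $n=3$ the Laurent monomial $x_1x_2^{-1}x_3$ is a nontrivial central element. Indeed the lemma fails for every odd $n\geq 3$ (for $n=5$, $x_2x_4+\lambda x_1x_3x_5$ is normal), while for $n=2$ and even $n$ the torus is simple and your entire argument --- lemma, the $\varphi^{-1}$ degree bookkeeping, and the order-preserving permutation step, all of which are correct as written --- does go through.

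The damage is not confined to your proof: the same element shows the proposition as stated is false for $n=3$. The map $\varphi(x_1)=x_1$, $\varphi(x_2)=x_2+\lambda x_1x_3$, $\varphi(x_3)=x_3$ preserves all three defining relations (this is exactly the content of the displayed identities) and is inverted by replacing $\lambda$ with $-\lambda$, so it is a non-diagonal automorphism of $A_q(3)$; this is the well-known extra one-parameter unipotent family in the automorphism group of uniparameter quantum $3$-space, which is a semidirect product of $(\mathbb{C}^\times)^3$ with $\mathbb{C}$ rather than a torus. Consequently no proof of the statement in this generality can exist, and any correct treatment must either impose multiparameter genericity or restrict $n$; your two ``routine'' end steps are sound, but the structural input you flagged as the genuine obstacle is irreparable in the uniparameter case. (This is as much a problem for the paper as for you: Section 4 applies the proposition to $A_q(3)$, where diagonality of the $k_i$-actions would need a separate argument, e.g.\ one exploiting $k_ie_ik_i^{-1}=q^{2}e_i$ and invertibility of $k_i$, rather than the cited automorphism theorem.)
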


Finally, we present a lemma which is useful for us in checking the module-algebra structures of $U_q(sl(m+1))$ on $A_q(n)$.
\begin{lemma}\label{w8}
Given the module-algebra actions of the generators $k_i$, $e_i$, $f_i$ of $U_q(sl(m+1))$ on $A_q(n)$ for $i\in\{1,\cdots,m\}$, if an element in the ideal from the relations (\ref{w5})-(\ref{w6}) of $U_q(sl(m+1))$  acting on the generators $x_i$ of $A_q(n)$ produces zero for $i\in\{1,\cdots,n\}$, then this element acting on any $v\in A_q(n)$ produces zero.
\end{lemma}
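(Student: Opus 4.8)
The plan is to recast the statement as a claim about a common kernel and reduce everything to the multiplicativity of the coproduct. Let $\tilde U$ denote the free algebra on the symbols $e_i,f_i,k_i,k_i^{-1}$, equipped with the algebra maps $\Delta,\varepsilon$ extending the given formulas on generators, so that $U_q(sl(m+1))=\tilde U/I$ with $I$ the two-sided ideal generated by the relators underlying (\ref{w5})--(\ref{w6}). Since $\Delta$ and $\varepsilon$ descend to $U_q(sl(m+1))$, the ideal $I$ is a Hopf ideal; in particular $\varepsilon(I)=0$ and $\Delta(I)\subseteq I\otimes\tilde U+\tilde U\otimes I$. The prescribed action of the generators extends uniquely to an algebra homomorphism $\pi\colon\tilde U\to\operatorname{End}_{\mathbb C}A_q(n)$, and I first record that the module-algebra identities persist on all of $\tilde U$: the set of $w$ satisfying $\pi(w)(ab)=\sum_i\pi(w_i^{'})(a)\pi(w_i^{''})(b)$ for all $a,b$ is a subalgebra, because $\Delta$ and $\pi$ are multiplicative, and it contains the generators, hence equals $\tilde U$; by the same reasoning $\pi(w)(1)=\varepsilon(w)1$ holds for every $w\in\tilde U$. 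Because $\pi$ is a homomorphism, $\ker\pi$ is a two-sided ideal, so to prove the lemma it suffices to show that every generating relator lies in $\ker\pi$, i.e. annihilates all of $A_q(n)$.

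Fix the finite set $R=\{r_\beta\}$ of generating relators and put
\[
V=\{\,v\in A_q(n):\pi(r_\beta)(v)=0\text{ for every }r_\beta\in R\,\},
\]
the common kernel. The counit property gives $\pi(r_\beta)(1)=\varepsilon(r_\beta)1=0$, so $1\in V$, while the hypothesis of the lemma is exactly $x_i\in V$ for all $i$. The crux is to prove that $V$ is a subalgebra of $A_q(n)$. Given $a,b\in V$ and a relator $r\in R$, I expand $\pi(r)(ab)=\sum_i\pi(r_i^{'})(a)\pi(r_i^{''})(b)$ using $\Delta(r)$. The key structural fact to establish is that, after rewriting with the commutation and inverse relations, every term of $\Delta(r)$ carries a generating relator in one of its two tensor slots, more precisely
\[
\Delta(r)\in(\tilde U\cdot R)\otimes\tilde U+\tilde U\otimes(\tilde U\cdot R),
\]
where $\tilde U\cdot R$ denotes the left ideal generated by $R$. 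Granting this, each summand kills either $a$ or $b$, since $\pi(u\,r_\beta)(a)=\pi(u)\bigl(\pi(r_\beta)(a)\bigr)=0$ for $a\in V$; hence $\pi(r)(ab)=0$ and $ab\in V$.

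Once $V$ is known to be a subalgebra containing $1$ and all $x_i$, it must equal $A_q(n)$, because the $x_i$ generate the algebra. Thus each generating relator annihilates every element of $A_q(n)$, and since $\ker\pi$ is a two-sided ideal this forces $I\subseteq\ker\pi$, which is precisely the assertion of the lemma.

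The main obstacle is the structural fact about $\Delta(r)$ used above. For the grouplike relations (\ref{w5}) and the commutation relations (\ref{w2}) it is a short computation; for instance
\[
\Delta(k_ie_j-q^{a_{ij}}e_jk_i)=k_i\otimes(k_ie_j-q^{a_{ij}}e_jk_i)+(k_ie_j-q^{a_{ij}}e_jk_i)\otimes k_ik_j+q^{a_{ij}}e_jk_i\otimes(k_ik_j-k_jk_i),
\]
where every nonprimitive term visibly carries a relator, and the $[e_i,f_j]$ relation behaves similarly. The genuinely laborious case is the quantum Serre relators (\ref{w6}) and their $e$-analogue: expanding $\Delta$ of the cubic expression and repeatedly substituting the commutation relators, one must verify that besides the expected skew-primitive pieces $r\otimes g$ and $g^{'}\otimes r$ (which contain $r$ itself and hence already qualify), every remaining term can be collected so that one of its tensor factors lands in the left ideal $\tilde U\cdot R$. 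Checking that no stray term escapes this left ideal is the only real work; the rest of the argument is formal.
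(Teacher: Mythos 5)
Your formal skeleton is sound and is essentially the same skeleton on which the paper's proof rests: extend the module-algebra identity multiplicatively from the generators to the free algebra $\tilde U$, note that $\ker\pi$ is a two-sided ideal, and reduce the lemma to showing that the common kernel $V$ of the defining relators is a subalgebra, which becomes a statement about how $\Delta(r)$ decomposes for each relator $r$. The genuine gap is that you never carry out this decomposition in the only case that has real content: the quantum Serre relators. The paper's proof \emph{is} exactly that computation --- it expands $e_i^2e_{i+1}(xy)-(q+q^{-1})e_ie_{i+1}e_i(xy)+e_{i+1}e_i^2(xy)$ through the coproduct and regroups it into $x\cdot(\mathrm{Serre})(y)$, $(\mathrm{Serre})(x)\cdot k_i^2k_{i+1}(y)$, plus blocks that must be shown to cancel. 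Declaring this step ``the only real work'' and then omitting it leaves the lemma unproved at precisely its essential point.

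Moreover, the claim you defer to is doubtful in the exact form you state it. You permit ``rewriting with the commutation and inverse relations,'' but rewriting changes an element only modulo \emph{two-sided} multiples $u\,r\,w$ of relators, and such terms are useless for your argument: $\pi(u\,r\,w)(a)=\pi(u)\pi(r)(\pi(w)(a))$, and $\pi(w)(a)$ need not be a generator, so the hypothesis does not kill it. The distinction is real in a free algebra: collecting the pure-$k$ second slots of $\Delta(\mathrm{Serre})$ into $r\otimes k_i^2k_j$ leaves behind slots such as $k_jk_i^2-k_i^2k_j$, and one can build a representation of $\tilde U$ with a vector annihilated by every element of $\tilde U\cdot R$ but not by $k_jk_i^2-k_i^2k_j$ (set all $e$'s and $f$'s to zero and let the $k$'s commute only where the relators applied to that vector force them to); hence that slot is \emph{not} in $\tilde U\cdot R$, and any valid decomposition would have to be far less obvious than the term-by-term one you sketch. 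The same issue already appears for $e_if_j-f_je_i-\delta_{ij}(k_i-k_i^{-1})/(q-q^{-1})$, whose cross terms leave the slot $q^{-a_{ij}}e_ik_j^{-1}-k_j^{-1}e_i$. The standard repair is a staged argument: first prove the lemma for the relators of (\ref{w5}), (\ref{w2}), the $[e_i,f_j]$ relators and the $a_{ij}=0$ commutators, for which your left-ideal decomposition does hold (your sample computation is of this type); conclude that these act by zero on all of $A_q(n)$, hence so does every two-sided multiple of them; and only then treat the Serre relators, computing $\Delta$ modulo operators already known to vanish, where the skew-primitivity $\Delta(r)\equiv 1\otimes r+r\otimes k_i^2k_j$ holds and the subalgebra property of $\ker\pi(r)$ is immediate. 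Your proposal contains neither this staging nor the paper's direct computation, so as it stands it does not prove the lemma.
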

\begin{proof}
Here, we only prove that if $e_i^2e_{i+1}(x)-(q+q^{-1})e_ie_{i+1}e_i(x)+e_{i+1}e_i^2(x)=0$ and $e_i^2e_{i+1}(y)-(q+q^{-1})e_ie_{i+1}e_i(y)+e_{i+1}e_i^2(y)=0$,
then $e_i^2e_{i+1}(xy)-(q+q^{-1})e_ie_{i+1}e_i(xy)+e_{i+1}e_i^2(xy)=0$ where $x,y$ are both generators of $A_q(n)$. The other relations can be proved similarly. \\
$e_i^2e_{i+1}(xy)-(q+q^{-1})e_ie_{i+1}e_i(xy)+e_{i+1}e_i^2(xy)$
\begin{eqnarray*}
&=&e_i^2(xe_{i+1}(y)+e_{i+1}(x)k_{i+1}(y))-(q+q^{-1})e_ie_{i+1}(xe_i(y)+e_i(x)k_i(y))\\
&&+e_{i+1}e_i(xe_i(y)+e_i(x)k_i(y))\\
&=&e_i(xe_ie_{i+1}(y)+e_i(x)k_ie_{i+1}(y)+e_{i+1}(x)e_ik_{i+1}(y)+e_ie_{i+1}(x)k_ik_{i+1}(y))\\
&&-(q+q^{-1})e_i(xe_{i+1}e_i(y)+e_{i+1}(x)k_{i+1}e_i(y)+e_i(x)e_{i+1}k_i(y)+e_{i+1}e_i(x)\\
&&\cdot k_{i+1}k_i(y))+e_{i+1}(xe_i^2(y)+e_i(x)k_ie_i(y)+e_i(x)e_ik_i(y)+e_i^2(x)k_i^2(y))\\
&=&(xe_i^2e_{i+1}(y)-(q+q^{-1})xe_ie_{i+1}e_i(y)+xe_{i+1}e_i^2(y))+(e_i^2e_{i+1}(x)-(q+q^{-1})\\
&&\cdot e_ie_je_i(x)+e_je_i^2(x))k_i^2k_{i+1}(y)+(e_i(x)k_ie_ie_{i+1}(y)+e_i(x)e_ik_ie_{i+1}(y)\\
&&-(q+q^{-1})e_i(x)e_ie_{i+1}k_i(y))+(e_i^2(x)k_i^2e_{i+1}(y)-(q+q^{-1})e_i^2(x)\\
&&\cdot k_ie_{i+1}k_i(y)+e_i^2(x)e_{i+1}k_i^2(y))+(e_{i+1}(x)e_i^2k_{i+1}(y)-(q+q^{-1})e_{i+1}(x)\\
&&\cdot e_ik_{i+1}e_i(y)+e_{i+1}(x)k_{i+1}e_i^2(y))+(e_ie_{i+1}(x)k_ie_ik_{i+1}(y)+e_ie_{i+1}(x)\\
&&\cdot e_ik_ik_{i+1}(y)-(q+q^{-1})e_ie_{i+1}(x)k_ik_{i+1}e_i(y))+(-(q+q^{-1})e_{i+1}e_i(x)\\
&&\cdot e_ik_{i+1}k_i(y)+e_{i+1}e_i(x)k_{i+1}k_ie_i(y)+e_{i+1}e_i(x)k_{i+1}e_ik_i(y))\\
&&+(-(q+q^{-1})e_i(x)k_ie_{i+1}e_i(y)+e_i(x)e_{i+1}k_ie_i(y)+e_i(x)e_{i+1}e_ik_i(y))\\
&&=0.
\end{eqnarray*}
Thus, the lemma holds.
\end{proof}

Therefore, by Lemma \ref{w8}, in checking, whether the relations of $U_q(sl(m+1))$, acting on any $v\in A_q(n)$, produce zero, we only need to check, whether they, acting on the generators $x_1$, $\cdots$, $x_n$, produce zero.
\section{Classification of $U_q(sl(m+1))$-symmetries on $A_q(2)$}
In this section, we study the module-algebra structures of $U_q(sl(m+1))$ on $A_q(2)$ where $m\geq 2$.

According to the paper \cite{dup/sin3} by S.Duplij and S.Sinel'shchikov, we have known all module-algebra structures of $U_q(sl(2))$ on $A_q(2)$. For $U_q(sl(3))$, there are two sub-Hopf algebras which are isomorphic to $U_q(sl(2))$. One is generated by $k_1$, $e_1$ and $f_1$. Denote this algebra by $A$. The other denoted by $B$ is generated by $k_2$, $e_2$ and $f_2$. By the definition of module algebra of one Hopf algebra, the module-algebra structures on $A_q(2)$ of these two sub-Hopf algebras are kinds of those in \cite{dup/sin3}. All kinds of
module-algebra structures
of $A$ and $B$ on $A_q(2)$ are as follows (here, $i=1, 2$):\\
\begin{enumerate}
\item
\begin{eqnarray*}
&&k_i(x_1)=\pm x_1,\quad k_i(x_2)=\pm x_2,\\
&&e_i(x_1)=e_i(x_2)=f_i(x_1)=f_i(x_2)=0.
\end{eqnarray*}
\item
\begin{eqnarray*}
&&k_i(x_1)=qx_1, \quad k_i(x_2)=q^{-1}x_2,\\
&&e_i(x_1)=0,\quad e_i(x_2)=\tau_ix_1,\\
&&f_i(x_1)=\tau_i^{-1}x_2, \quad f_i(x_2)=0,
\end{eqnarray*}
where $\tau_i\in\mathbb{C}\backslash\{0\}$.
\item
\begin{eqnarray*}
&&k_i(x_1)=qx_1, \quad k_i(x_2)=q^{-2}x_2,\\
&&e_i(x_1)=0, \quad e_i(x_2)=b_i,\\
&&f_i(x_1)=b_i^{-1}x_1x_2, \quad f_i(x_2)=-qb_i^{-1}x_2^2,
\end{eqnarray*}
where $b_i\in\mathbb{C}\backslash\{0\}$.
\item
\begin{eqnarray*}
&&k_i(x_1)=q^2x_1, \quad k_i(x_2)=q^{-1}x_2,\\
&&e_i(x_1)=-qc_i^{-1}x_1^2, \quad e_i(x_2)=c_i^{-1}x_1x_2,\\
&&f_i(x_1)=c_i, \quad f_i(x_2)=0,
\end{eqnarray*}
where $c_i\in\mathbb{C}\backslash\{0\}$.
\item
\begin{eqnarray*}
&&k_i(x_1)=q^{-2}x_1, \quad k_i(x_2)=q^{-1}x_2,\\
&&e_i(x_1)=a_i,\quad e_i(x_2)=0,\\
&&f_i(x_1)=-qa_i^{-1}x_1^2+t_ix_2^4, \quad f_i(x_1)=-qa_i^{-1}x_1x_2+s_ix_2^3,
\end{eqnarray*}
where $a_i\in\mathbb{C}\backslash\{0\}$, $t_i$, $s_i\in \mathbb{C}$.
\item
\begin{eqnarray*}
&& k_i(x_1)=qx_1, \quad k_i(x_2)=q^{2}x_2,\\
&& e_i(x_1)=-qd_i^{-1}x_1x_2+u_ix_1^3, \quad e_i(x_2)=-qd_i^{-1}x_2^2+v_ix_1^4,\\
&& f_i(x_1)=0, \quad f_i(x_2)=d_i,
\end{eqnarray*}
where $d_i\in\mathbb{C}\backslash\{0\}$, $u_i$, $v_i\in \mathbb{C}$.
\end{enumerate}

In addition, the associated classical
limit actions of $sl_3$
(here it is the Lie algebra generated
by $h_1$, $h_2$, $e_1$, $e_2$, $f_1$, $f_2$ with
the relations $[e_1,f_1]=h_1$, $[e_2,f_2]=h_2$, $[e_1,f_2]=[e_2,f_1]=0$, $[h_1,e_1]=2e_1$, $[h_1,e_2]=-e_2$, $[h_2,e_2]=2e_2$, $[h_2,e_1]=-e_1$, $[h_1,f_1]=-2f_1$, $[h_1,f_2]=f_2$, $[h_2,f_2]=-2f_2$, $[h_2,f_1]=f_1$, $[h_1,h_2]=0$) on $\mathbb{C}[x_1,x_2]$ by differentiations are derived from the quantum actions via substituting $k_1=q^{h_1}$, $k_2=q^{h_2}$ with subsequent formal passage to the limit as $q\rightarrow 1$.

\begin{theorem}\label{T8}
$U_q(sl(3))$-symmetries on $A_q(2)$ up to isomorphisms
and their classical limits are as follows
\medskip

\begin{tabular}{|l|l|}
\hline $U_q(sl(3))$-symmetries &  Classical limit \\
& $sl_3$-actions on $\mathbb{C}[x_1,x_2]$ \\
\hline
$k_i(x_1)=\pm x_1$, $k_i(x_2)=\pm x_2$, & $h_i(x_1)=0$, $h_i(x_2)=0$,\\
$k_j(x_1)=\pm x_1$, $k_j(x_2)=\pm x_2$, & $h_j(x_1)=0$, $h_j(x_2)=0$, \\
 $e_i(x_1)=e_i(x_2)=f_i(x_1)=f_i(x_2)=0$,  & $e_i(x_1)=e_i(x_2)=f_i(x_1)=f_i(x_2)=0$, \\
$e_j(x_1)=e_j(x_2)=f_j(x_1)=f_j(x_2)=0$ & $e_j(x_1)=e_j(x_2)=f_j(x_1)=f_j(x_2)=0$ \\
\hline
$k_i(x_1)=q^{-2}x_1$, $k_i(x_2)=q^{-1}x_2$, & $h_i(x_1)=-2x_1$, $h_i(x_2)=-x_2$, \\
$k_j(x_1)=qx_1$, $k_j(x_2)=q^{-1}x_2$,& $h_j(x_1)=x_1$, $h_j(x_2)=-x_2$,\\
$e_i(x_1)=1$, $e_i(x_2)=0$, & $e_i(x_1)=1$, $e_i(x_2)=0$, \\
$e_j(x_1)=0$, $e_j(x_2)=x_1$, & $e_j(x_1)=0$, $e_j(x_2)=x_1$,  \\
$f_i(x_1)=-qx_1^2$,  $f_i(x_2)=-qx_1x_2$, & $f_i(x_1)=-x^2$,  $f_i(x_2)=-x_1x_2$,  \\
$f_j(x_1)=x_2$, $f_j(x_2)=0$ & $f_j(x_1)=x_2$, $f_j(x_2)=0$\\
\hline
$k_i(x_1)=qx_1$, $k_i(x_2)=q^{2}x_2$, & $h_i(x_1)=x_1$, $h_i(x_2)=2x_2$,  \\
$k_j(x_1)=qx_1$, $k_j(x_2)=q^{-1}x_2$, & $h_j(x_1)=x_1$, $h_j(x_2)=-x_2$,  \\
$e_i(x_1)=-qx_1x_2$, $e_i(x_2)=-qx_2^2$, & $e_i(x_1)=-x_1x_2$, $e_i(x_2)=-x_2^2$, \\
$e_j(x_1)=0$, $ e_j(x_2)=x_1$, & $e_j(x_1)=0$, $ e_j(x_2)=x_1$,  \\
$f_i(x_1)=0$, $f_i(x_2)=1,$ & $f_i(x_1)=0$, $f_i(x_2)=1,$  \\
 $f_j(x_1)=x_2$, $f_j(x_2)=0$ & $f_j(x_1)=x_2$, $f_j(x_2)=0$\\
\hline
$k_i(x_1)=q^{-2}x_1$, $k_i(x_2)=q^{-1}x_2$, & $h_i(x_1)=-2x_1$, $h_i(x_2)=-x_2$,\\
$k_j(x_1)=qx_1$, $k_j(x_2)=q^{2}x_2$, & $h_j(x_1)=x_1$, $h_j(x_2)=2x_2$, \\
$e_i(x_1)=1$, $ e_i(x_2)=0,$ & $ e_i(x_1)=1$, $ e_i(x_2)=0,$\\
$ e_j(x_1)=-qx_1x_2$, $e_j(x_2)=-qx_2^2,$ & $e_j(x_1)=-x_1x_2$, $e_j(x_2)=-x_2^2,$ \\
$f_i(x_1)=-qx_1^2$, $f_i(x_2)=-qx_1x_2,$ & $f_i(x_1)=-x_1^2$, $f_i(x_2)=-x_1x_2,$ \\
$f_j(x_1)=0$, $f_j(x_2)=1$ & $f_j(x_1)=0$, $f_j(x_2)=1$\\
\hline
\end{tabular}\\
\medskip

\noindent for any $i=1$, $j=2$ or $i=2$, $j=1$. Additively, there are no isomorphisms between these four kinds of module-algebra structures.
\end{theorem}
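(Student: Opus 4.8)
The plan is to exploit the two sub-Hopf algebras $A = \langle k_1, e_1, f_1\rangle$ and $B = \langle k_2, e_2, f_2\rangle$, each isomorphic to $U_q(sl(2))$. First I would observe that any $U_q(sl(3))$-module algebra structure on $A_q(2)$ restricts to $A$- and $B$-module algebra structures, so the action of the index-$i$ generators ($i=1,2$) must be one of the six parametrized families (1)--(6) recalled above. The problem thus reduces to deciding which pair of families (one for $A$, one for $B$) can be glued together consistently, and then to normalizing the result.

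The engine of the classification is a weight analysis driven by the cross relations $k_i e_j k_i^{-1} = q^{-1} e_j$ and $k_i f_j k_i^{-1} = q f_j$ for $i \neq j$ (here $a_{12}=a_{21}=-1$). Each family makes $k$ act diagonally, $k(x_1) = q^{w_1} x_1$, $k(x_2) = q^{w_2} x_2$, with weight $(w_1,w_2)$ equal to $(0,0)$, $(1,-1)$, $(1,-2)$, $(2,-1)$, $(-2,-1)$, $(1,2)$ for families (1)--(6) respectively (family (1) in fact forces $\pm1$ eigenvalues, since $[e_i,f_i]=0$ gives $k_i^2=1$). Because conjugations and automorphisms are diagonal, the cross relations become linear conditions on these weights; for instance an index-$j$ action of type (2) forces $w_1^{(i)} - w_2^{(i)} = -1$, type (5) forces $w_1^{(i)} = 1$, and type (6) forces $w_2^{(i)} = -1$. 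Running through all cases, these conditions eliminate families (3) and (4) altogether, eliminate the like-with-like pairings $(2,2)$, $(5,5)$, $(6,6)$, and eliminate every nontrivial family paired with (1) (a power $q^{\pm1}$ cannot equal $\pm1$). The only survivors are $(1,1)$, $(2,5)$, $(2,6)$ and $(5,6)$ up to interchanging the two indices, which are exactly the four rows of the table.

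For each surviving pairing I would then impose the remaining relations. The same cross relations also annihilate the higher-order tail parameters: e.g. $k_j f_i k_j^{-1} = q f_i$ with $i$ of type (5) and $j$ of type (2) forces $t_i(q^{-5}-q) = 0$ and $s_i(q^{-2}-q)=0$, hence $t_i = s_i = 0$ since $q$ is not a root of unity, and the analogous computation kills $u_i, v_i$ in type (6). It then remains to verify the Serre relations together with $[e_i,f_j] = [e_j,f_i] = 0$, which by Lemma \ref{w8} need only be checked on the generators $x_1, x_2$; these hold for each survivor. Finally, Proposition \ref{w3} lets me conjugate by a diagonal automorphism $x_l \mapsto \alpha_l x_l$ to rescale the free nonzero scalars ($\tau, a, b, c, d, \dots$) to $1$, producing precisely the normal forms listed.

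For the non-isomorphism statement I would use that conjugation by a diagonal automorphism leaves the eigenvalues of each $\pi(k_i)$ unchanged, so the weight data of $(k_1,k_2)$ is an isomorphism invariant. The four kinds carry visibly different weight data: kind (1) has only $\pm1$ eigenvalues, while the multisets $\{(q^{-2},q^{-1}),(q,q^{-1})\}$, $\{(q,q^2),(q,q^{-1})\}$, $\{(q^{-2},q^{-1}),(q,q^2)\}$ distinguish kinds (2), (3), (4); hence no two kinds are isomorphic. The classical limit is then read off by setting $k_i = q^{h_i}$ and letting $q \to 1$ formally. I expect the main obstacle to be the bookkeeping inside the surviving pairings --- confirming that all Serre relations and cross-commutators vanish on the generators and that the tail parameters are forced to zero --- whereas the weight analysis that cuts the six-by-six table of pairings down to four is clean and does the conceptual work.
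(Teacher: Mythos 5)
Your proposal is correct and takes essentially the same route as the paper's own proof: restrict to the two $U_q(sl(2))$ sub-Hopf algebras, invoke the six known module-algebra structures, test the 21 unordered pairings against the cross relations (reduced to the generators $x_1,x_2$ by Lemma \ref{w8}), kill the tail parameters $s_i,t_i,u_i,v_i$, normalize the remaining scalars by a diagonal automorphism via Proposition \ref{w3}, and distinguish the four kinds by the $k$-eigenvalue data, which every automorphism of $A_q(2)$ preserves. Your explicit weight conditions (e.g. type (5) forcing $w_1^{(i)}=1$, and $t_i(q^{-5}-q)=0$, $s_i(q^{-2}-q)=0$) are simply a systematic organization of the case check that the paper compresses into ``some computations'' plus the single worked example $(A5)|(B6)$, and they reproduce the paper's conclusions exactly.
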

\begin{proof}
Denote the six cases of module-algebra structures of $A$ (resp. $B$) on $A_q(2)$ by $(A1)$, $\cdots$, $(A6)$ (resp. by $(B1)$, $\cdots$, $(B6)$) respectively.
Then, for studying the module-algebra structures of $U_q(sl(3))$ on $A_q(2)$, we only need to check that whether the actions of $A$ and those of $B$ are compatible. In other words, we only need to check that
\begin{eqnarray}\label{e11}
k_1e_2(u)=q^{-1}e_2k_1(u), \quad k_1f_2(u)=qf_2k_1(u),\end{eqnarray}
\begin{eqnarray}\label{r10}k_2e_1(u)=q^{-1}e_1k_2(u), \quad k_2f_1(u)=qf_1k_2(u),\end{eqnarray}
\begin{eqnarray}\label{r11}e_1f_2(u)=f_2e_1(u),\quad e_2f_1(u)=f_1e_2(u),\end{eqnarray}
\begin{eqnarray}\label{r12}e_1^2e_2(u)-(q+q^{-1})e_1e_2e_1(u)+e_2e_1^2(u)=0,\end{eqnarray}
\begin{eqnarray}\label{r13}e_2^2e_1(u)-(q+q^{-1})e_2e_1e_2(u)+e_1e_2^2(u)=0,\end{eqnarray}
\begin{eqnarray}\label{r14}f_1^2f_2(u)-(q+q^{-1})f_1f_2f_1(u)+f_2f_1^2(u)=0,\end{eqnarray}
\begin{eqnarray}\label{e12}f_2^2f_1(u)-(q+q^{-1})f_2f_1f_2(u)+f_1f_2^2(u)=0,\end{eqnarray}
where $u\in \{x_1, x_2\}$. Here, since the actions of $A$ and $B$ in $U_q(sl(3))$ can be interchanged, we only need to check 21 kinds of cases, i.e., whether $(Ai)$ is compatible with $(Bj)$ for any $1\leq i\leq j\leq 6$.
By some computations, we can find that $(A1)$ is compatible with $(B1)$, $(A2)$ is compatible with $(B5)$ only when $s_2=0$, $t_2=0$, $(A2)$ is compatible with $(B6)$ only when $u_2=0$, $v_2=0$, $(A5)$ is compatible with $(B6)$ only when $s_1=t_1=u_2=v_2=0$ and any other two are not compatible. Here, for example, we only check whether $(A5)$ and $(B6)$ are compatible. First, we need to check $k_1e_2(x_1)=q^{-1}e_2k_1(x_1)$. In this case, we obtain $k_1e_2(x_1)=-q^{-2}d_2^{-1}x_1x_2+q^{-6}u_2x_1^3$ and $q^{-1}e_2k_1(x_1)=-q^{-2}d_2^{-1}x_1x_2+q^{-3}u_2x_1^3$. Therefore, if $k_1e_2(x_1)=q^{-1}e_2k_1(x_1)$, we must have $u_2=0$. Similarly, in checking $k_1e_2(x_2)=q^{-1}e_2k_1(x_2)$ and $k_2f_1(u)=qf_1k_2(u)$ for $u\in \{x_1, x_2\}$, we obtain $v_2=0$, $s_1=0$ and $t_1=0$. Then, other relations in (\ref{e11})-(\ref{e12}) are easily checked when $u_2=0$, $v_2=0$, $s_1=0$ and $t_1=0$. In this case, all actions are isomorphic to that with $a_1$=1, $d_2=1$. The desired isomorphism is given by $\varphi:x_1\rightarrow a_1 x_1, x_2\rightarrow d_2 x_2$.
Other cases can be checked similarly. It should be pointed out that since all the automorphisms of $A_q(2)$ commute with the actions of $k_1$ and $k_2$, the first 16 kinds of actions of $U_q(sl(3))$ in the first case are pairwise nonisomorphic.
Thus, the classical limits in the table above are obtained.

Since every automorphism of $A_q(2)$ commutes with the actions of $k_1$ and $k_2$ and the actions of $k_1$ and $k_2$ in the four kinds of actions are different, there are no isomorphisms between these four kinds of module-algebra structures.
\end{proof}

Denote the actions of $U_q(sl(2))$ on $A_q(2)$ in Case (1), Case (2), Case (5) with $t_i=s_i=0$ and Case (6) with $u_i=v_i=0$ by $\ast1$, $\ast2$, $\ast3$ and $\ast4$ respectively. If $\ast s$ and $\ast t$ are compatible, in other words, they determine a
$U_q(sl(3))$-module algebra structure on $A_q(2)$, we use an edge connecting $\ast s$ and $\ast t$, since $k_1$, $e_1$, $f_1$ and
$k_2$, $e_2$, $f_2$ are symmetric in $U_q(sl(3))$. Therefore,
we can use the following diagrams to express all the module algebra structures of $U_q(sl(3))$ on $A_q(2)$ in Theorem \ref{T8}:
\begin{eqnarray}\label{w1}\xymatrix{{\ast 1} \ar@{-}[r]&{\ast 1}  },~~~~\xymatrix{
  {\ast2} \ar@{-}[rr] \ar@{-}[dr]
                &  &   {\ast3}\ar@{-}[dl]    \\
                & {\ast4}               }.\end{eqnarray}
Here, every two adjacent vertices corresponds to two classes of module-algebra structures of $U_q(sl(3))$ on $A_q(2)$. For example,
$\xymatrix{{\ast 2} \ar@{-}[r]&{\ast 3}  }$ corresponds to the following two kinds of module-algebra structures of $U_q(sl(3))$ on $A_q(2)$:
one is that the actions of $k_1$, $e_1$, $f_1$ are of type $\ast 2$ and the actions of $k_2$, $e_2$, $f_2$ are of type $\ast 3$; the other is that  the actions of $k_1$, $e_1$, $f_1$ are of type $\ast 3$ and the actions of $k_2$, $e_2$, $f_2$ are of type $\ast 2$.

Next, we study the module-algebra structures of $U_q(sl(m+1))$ on $A_q(2)$ for $m\geq 3$. The corresponding Dynkin diagram of $sl(m+1)$ with $m$
vertices is as follows:
$$\xymatrix{
{\circ} \ar@{-}[r]& {\circ} \ar@{-}[r]&{\cdots}\ar@{-}[r]& {\circ} \ar@{-}[r]&{\circ}}.$$
In $U_q(sl(m+1))$, every vertex corresponds to one Hopf subalgebra isomorphic to $U_q(sl(2))$ and two adjacent vertices correspond to one Hopf subalgebra isomorphic to $U_q(sl(3))$. Therefore, for studying the module-algebra structures of $U_q(sl(m+1))$ on $A_q(2)$, we need to endow every vertex one kinds of actions of $U_q(sl(2))$ on $A_q(2)$. Moreover, there are some important observations:
\begin{itemize}
\item[1.] Since two adjacent vertices in the Dynkin diagram correspond to one Hopf subalgebra isomorphic to $U_q(sl(3))$, by Theorem \ref{T8},
the action of $U_q(sl(2))$ on $A_q(2)$ (on every vertex) should be of the following four kinds of possibilities: $\ast1$, $\ast2$,
$\ast3$ and $\ast4$. Moreover, every two adjacent vertices should be of the types in (\ref{w1}).

\item[2.] Except $\ast1$, no other type of actions of $U_q(sl(2))$ on $A_q(2)$ can be endowed with two different vertices simultaneously, since the relations (\ref{w2}), acting on $x_1$, $x_2$ producing zero, can not be satisfied.
\item[3.]If every vertex in the Dynkin diagram of $sl(m+1)$ is endowed an action of $U_q(sl(2))$ on $A_q(2)$ which is not Case $\ast1$, any two vertices which are not adjacent can not be endowed with the types which are adjacent in (\ref{w1}).
\end{itemize}

By the above discussion, we can obtain the following theorem immediately.
\begin{theorem}
For any $m\geq 3$, all module-algebra structures of $U_q(sl(m+1))$ on $A_q(2)$ are as follows:
\begin{eqnarray*}
&&k_i(x_1)=\pm x_1, \qquad k_i(x_2)=\pm x_2,\\
&&e_i(x_1)=e_i(x_2)=f_i(x_1)=f_i(x_2)=0,
\end{eqnarray*}
for any $i\in\{1,\cdots, m\}$.
\end{theorem}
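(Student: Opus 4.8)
The plan is to convert the three itemized observations into a rigorous argument by reducing everything to the already-established $U_q(sl(3))$ classification in Theorem~\ref{T8}. First I would fix a module-algebra structure of $U_q(sl(m+1))$ on $A_q(2)$ and, for each $i\in\{1,\cdots,m\}$, restrict it to the Hopf subalgebra $\langle k_i,e_i,f_i\rangle\cong U_q(sl(2))$. By the classification in \cite{dup/sin3} recalled above, each such restriction must be one of the six types $(1)$--$(6)$. The key constraint comes from adjacent indices: for each $i$ with $1\le i\le m-1$, the pair $(k_i,e_i,f_i;\,k_{i+1},e_{i+1},f_{i+1})$ generates a copy of $U_q(sl(3))$, so by Theorem~\ref{T8} the ordered pair of types at vertices $i$ and $i+1$ must be one of the edges in the diagrams~(\ref{w1}); that is, each vertex carries a type among $\ast1,\ast2,\ast3,\ast4$, and each adjacent pair is an edge of~(\ref{w1}).

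Next I would record the crucial rigidity observation (item~2): among $\ast1,\ast2,\ast3,\ast4$, only $\ast1$ acts on both generators with the $k_i$-eigenvalues $\pm1$, whereas $\ast2,\ast3,\ast4$ force specific nontrivial powers of $q$ as the $k_i$-eigenvalues on $x_1$ and $x_2$. Since every automorphism of $A_q(2)$ is diagonal (Proposition~\ref{w3}) and hence commutes with the diagonal $k$-actions, the eigenvalue data is an isomorphism invariant and cannot be altered. The point is that $\ast2,\ast3,\ast4$ each pin down $k(x_1)$ and $k(x_2)$ to fixed nontrivial scalings, and one checks directly that no single nontrivial type can be placed on two \emph{distinct} vertices $i\ne j$ simultaneously: the relations~(\ref{w2}), namely $k_ie_jk_i^{-1}=q^{a_{ij}}e_j$ and the $f$-analogue, must hold on the generators $x_1,x_2$, and for two vertices carrying nontrivial types these $q$-commutation constraints are incompatible (this is exactly item~2, and the verification is the routine eigenvalue bookkeeping already invoked in the $U_q(sl(3))$ proof).

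From here the combinatorial core (items~1 and 3) finishes the argument. Suppose, for contradiction, that some vertex $i_0$ carries a nontrivial type. Since by item~2 no nontrivial type repeats, and since in the diagrams~(\ref{w1}) every nontrivial vertex must be adjacent (as an edge) to another nontrivial vertex of a \emph{compatible} type, the presence of one nontrivial vertex forces a neighbor to be nontrivial as well, and these neighbors must be \emph{distinct} types drawn from $\{\ast2,\ast3,\ast4\}$. But the Dynkin diagram of $sl(m+1)$ for $m\ge3$ is a path with at least four vertices, and propagating compatible nontrivial types along it — respecting both the edge rule~(\ref{w1}) for adjacent vertices and the non-adjacency obstruction of item~3 for non-adjacent vertices — forces either a repeated nontrivial type (contradicting item~2) or an adjacency pattern among non-adjacent vertices that~(\ref{w1}) forbids. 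Hence no nontrivial type can occur anywhere, and every vertex must be of type $\ast1$, which is precisely the stated conclusion $k_i(x_1)=\pm x_1$, $k_i(x_2)=\pm x_2$, $e_i=f_i=0$ on the generators.

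I expect the main obstacle to be item~3, the non-adjacency obstruction: verifying that two vertices $i,j$ with $|i-j|>1$ (so $a_{ij}=0$, forcing $e_ie_j=e_je_i$, $f_if_j=f_jf_i$ on $x_1,x_2$) cannot both carry nontrivial types that happen to be \emph{adjacent} in~(\ref{w1}). This requires checking the commutation relations~(\ref{w5}) and the vanishing Serre-type relations on the generators for each candidate pair of nontrivial types, and confirming the eigenvalue/weight computations are genuinely obstructed rather than merely constrained; by Lemma~\ref{w8} it suffices to test these relations on $x_1$ and $x_2$ alone, which keeps the casework finite but still demands careful tracking of the $q$-powers appearing in each of $\ast2,\ast3,\ast4$.
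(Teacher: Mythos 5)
Your proposal is correct and takes essentially the same route as the paper: restrict to the $U_q(sl(3))$ subalgebras generated by adjacent pairs, invoke Theorem \ref{T8} to confine each vertex of the Dynkin diagram to the types $\ast1$--$\ast4$ with adjacent vertices forming edges of (\ref{w1}), and then use the no-repetition observation together with the non-adjacency obstruction to force every vertex to be $\ast1$ (the paper states these same three observations and declares the theorem immediate). One trivial slip: the Dynkin diagram of $sl(m+1)$ has $m$ vertices, hence at least three (not four) when $m\geq 3$; this is harmless, since for $m=3$ the contradiction comes from your second disjunct (three distinct types from the triangle necessarily place (\ref{w1})-adjacent types on the two non-adjacent endpoints, violating item~3), while pigeonhole on the three nontrivial types handles $m\geq 4$.
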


\section{Classification of $U_q(sl(m+1))$-symmetries on $A_q(3)$}
In this section we study the module-algebra structures of
$U_q(sl(m+1))$ on $A_q(3)$. As in Section 3, we first study the
actions of $U_q(sl(2))$ on $A_q(3)$.

Let us assume that $U_q(sl(2))$ is generated by $k$, $e$, $f$. By the definition of the module algebra, it is easy to see that any action of $U_q(sl(2))$ on $A_q(3)$ is determined by such $3\times 3$ matrix with entries from $A_q(3)$:
\begin{eqnarray}
M\stackrel{def}{=}\left[\begin{array}{ccc}
k(x_1)& k(x_2)& k(x_3)\\
e(x_1)& e(x_2)& e(x_3)\\
f(x_1)& f(x_2)& f(x_3)\end{array}\right],
\end{eqnarray}
which is called the action matrix (see \cite{dup/sin3}).
Given a $U_q(sl(2))$-module algebra structure on $A_q(3)$. Obviously, the action of $k$ determines an automorphism of $A_q(3)$. Therefore, by Proposition \ref{w3}, we can set
\begin{eqnarray}
M_k\stackrel{def}{=}\left[\begin{array}{ccc}
k(x_1)& k(x_2)& k(x_3)\end{array}\right]= \left[\begin{array}{ccc}
\alpha x_1& \beta x_2& \gamma x_3\end{array}\right],
\end{eqnarray}
where $\alpha$, $\beta$, $\gamma$ are non-zero complex numbers. So, every monomial $x_1^mx_2^nx_3^s\in A_q(3)$ is an eigenvector for $k$ and the
associated eigenvalue $\alpha^m\beta^n\gamma^s$ is called the \emph{weight} of this monomial, which will be written as $wt(x_1^mx_2^nx_3^s)=\alpha^m\beta^n\gamma^s$.

Let
\begin{eqnarray}
M_{ef}\stackrel{def}{=}\left[\begin{array}{ccc}
e(x_1)& e(x_2)& e(x_3)\\
f(x_1)& f(x_2)& f(x_3) \end{array}\right],
\end{eqnarray}
\begin{eqnarray*}
wt(M)\stackrel{def}{=}\left[\begin{array}{ccc}
wt(k(x_1))& wt(k(x_2))& wt(k(x_3))\\
wt(e(x_1))& wt(e(x_2))& wt(e(x_3))\\
wt(f(x_1))& wt(f(x_2))& wt(f(x_3)) \end{array}\right]
\bowtie \left[\begin{array}{ccc}
\alpha & \beta & \gamma\\
q^2\alpha & q^2\beta & q^2\gamma\\
q^{-2}\alpha & q^{-2}\beta & q^{-2}\gamma \end{array}\right],
\end{eqnarray*}
where the relation $A=(a_{ij}) \bowtie B=(b_{ij})$ means that if for every pair of indices $i$, $j$ such that both $a_{ij}$ and $b_{ij}$ are nonzero, one has $a_{ij}=b_{ij}$.

In the following, we denote the $i$-th homogeneous component of $M$, whose elements are just the $i$-th homogeneous components of the corresponding entries of $M$,  by $(M)_i$.

Set \begin{eqnarray*}
(M)_0=\left[\begin{array}{ccc}
0& 0 & 0\\
a_0& b_0& c_0\\
d_0& e_0& f_0 \end{array}\right]_0,
\end{eqnarray*}
where $a_0$, $b_0$, $c_0$, $d_0$, $e_0$, $f_0\in \mathbb{C}$.
Then, we obtain
\begin{eqnarray}\label{2}
wt((M)_0)
\bowtie \left[\begin{array}{ccc}
0& 0& 0\\
q^2\alpha& q^2\beta & q^2\gamma\\
q^{-2}\alpha & q^{-2}\beta & q^{-2}\gamma \end{array}\right]_0
\bowtie \left[\begin{array}{ccc}
0& 0& 0\\
1& 1& 1\\
1& 1& 1 \end{array}\right]_0 .
\end{eqnarray}
An application of $e$ and $f$ to Equation (\ref{e61}) gives the following six equalities:
\begin{eqnarray}\label{4}
x_2e(x_1)-q\beta e(x_1)x_2=qx_1e(x_2)-\alpha e(x_2)x_1,\end{eqnarray}
\begin{eqnarray}\label{e1}f(x_1)x_2-q^{-1}\beta^{-1}x_2f(x_1)=q^{-1}f(x_2)x_1-\alpha^{-1}x_1f(x_2),\end{eqnarray}
\begin{eqnarray}\label{e2}x_3e(x_1)-q\gamma e(x_1)x_3=qx_1e(x_3)-\alpha e(x_3)x_1,\end{eqnarray}
\begin{eqnarray}\label{e3}f(x_1)x_3-q^{-1}\gamma^{-1}x_3f(x_1)=q^{-1}f(x_3)x_1-\alpha^{-1}x_1f(x_3),\end{eqnarray}
\begin{eqnarray}\label{e4}x_3e(x_2)-q\gamma e(x_2)x_3=qx_2e(x_3)-\beta e(x_3)x_2,\end{eqnarray}
\begin{eqnarray}\label{5}f(x_2)x_3-q^{-1}\gamma^{-1}x_3f(x_2)=q^{-1}f(x_3)x_2-\beta^{-1}x_2f(x_3).
\end{eqnarray}

After projecting the above six relations to $(A_q(3))_1$, we get
\begin{eqnarray*}
&&a_0(1-q\beta)x_2=b_0(q-\alpha)x_1,~~~~d_0(q-\beta^{-1})x_2=e_0(1-q\alpha^{-1})x_1,\\
&&a_0(1-q\gamma)x_3=c_0(q-\alpha)x_1,~~~~d_0(q-\gamma^{-1})x_3=f_0(1-q\alpha^{-1})x_1,\\
&&b_0(1-q\gamma)x_3=c_0(q-\beta)x_2,~~~~e_0(q-\gamma^{-1})x_3=f_0(1-q\beta^{-1})x_2.
\end{eqnarray*}
Therefore, we obtain that
\begin{eqnarray*}
&&a_0(1-q\beta)=b_0(q-\alpha)=d_0(q-\beta^{-1})=e_0(1-q\alpha^{-1})\\
&&=a_0(1-q\gamma)=c_0(q-\alpha)=d_0(q-\gamma^{-1})=f_0(1-q\alpha^{-1})\\
&&=b_0(1-q\gamma)=c_0(q-\beta)=e_0(q-\gamma^{-1})=f_0(1-q\beta^{-1})=0.
\end{eqnarray*}
Then, we have
\begin{eqnarray}
&&a_0\neq 0 \Rightarrow \beta=q^{-1}, \gamma=q^{-1},\qquad b_0\neq 0 \Rightarrow \alpha=q, \gamma=q^{-1},\\
&&c_0\neq 0 \Rightarrow \alpha=q, \beta=q,\qquad d_0\neq 0 \Rightarrow \beta=q^{-1}, \gamma=q^{-1},\\
&&e_0\neq 0 \Rightarrow \alpha=q, \gamma=q^{-1},\qquad f_0\neq 0 \Rightarrow \alpha=q, \beta=q.
\end{eqnarray}
By (\ref{2}) and using the above six equalities, we have only seven possibilities as follows:
\begin{eqnarray}
&&\left[\begin{array}{ccc}
\star& 0& 0\\
0 & 0 & 0 \end{array}\right]_0\Rightarrow \alpha=q^{-2}, \beta=q^{-1}, \gamma=q^{-1},\\
&&\left[\begin{array}{ccc}
0& \star&0\\
0  & 0 & 0 \end{array}\right]_0\Rightarrow \alpha=q, \beta=q^{-2}, \gamma=q^{-1},\\
&&\left[\begin{array}{ccc}
0& 0& \star\\
0&  0 & 0 \end{array}\right]_0\Rightarrow \alpha=q, \beta=q, \gamma=q^{-2},\\
&&\left[\begin{array}{ccc}
0& 0&0\\
\star &  0 & 0 \end{array}\right]_0\Rightarrow \alpha=q^{2}, \beta=q^{-1}, \gamma=q^{-1},\\
&&\left[\begin{array}{ccc}
0& 0&0\\
0  &  \star &0 \end{array}\right]_0\Rightarrow \alpha=q, \beta=q^{2}, \gamma=q^{-1},\\
&&\left[\begin{array}{ccc}
0&0&0\\
0  & 0 &\star \end{array}\right]_0\Rightarrow \alpha=q, \beta=q, \gamma=q^{2},\\
&&\left[\begin{array}{ccc}
0& 0& 0\\
0 & 0 &0 \end{array}\right]_0.
\end{eqnarray}
Here, in the above matrices, $\star$ just means the entry in the corresponding position is nonzero.

For the 1-st homogeneous component, since $wt(e(x_1))=q^2wt(x_1)\neq wt(x_1)$, we have $(e(x_1))_1=a_1x_2+b_1 x_3,$ for some $a_1$, $b_1\in \mathbb{C}$. Therefore, we set
\begin{eqnarray}
(M_{ef})_1=\left[\begin{array}{ccc}
a_1x_2+b_1x_3& a_2x_1+b_2x_3&a_3x_1+b_3x_2\\
c_1x_2+d_1x_3&  c_2x_1+d_2x_3&c_3x_1+d_3x_2 \end{array}\right]_1,
\end{eqnarray}
where $a_i$, $b_i$, $c_i\in \mathbb{C}$ for $i\in\{1, 2, 3\}$.

After projecting (\ref{4})-(\ref{5}) to $(A_q(3))_2$, we can obtain
\begin{eqnarray*}
a_1(1-q\beta)x_2^2=b_1(1-q^2\beta)x_1x_2=a_2(q-\alpha)x_1^2=b_2(q-q\alpha)x_1x_3,\end{eqnarray*}
\begin{eqnarray*}c_1(1-q^{-1}\beta^{-1})x_2^2=d_1(q-q^{-1}\beta^{-1})x_2x_3=d_2(1-\alpha^{-1})x_1x_3=c_2(q^{-1}-\alpha^{-1})x_1^2,\end{eqnarray*}
\begin{eqnarray*}a_1(q-q\gamma)x_2x_3=b_1(1-q\gamma)x_3^2=a_3(q-\alpha)x_1^2=b_3(q-q\alpha)x_1x_2,\end{eqnarray*}
\begin{eqnarray*}c_1(1-\gamma^{-1})x_2x_3=d_1(1-q^{-1}\gamma^{-1})x_3^2=d_3(1-\alpha^{-1})x_1x_2=c_3(q^{-1}-\alpha^{-1})x_1^2,\end{eqnarray*}
\begin{eqnarray*}a_2(q-q\gamma)x_1x_3=b_2(1-q\gamma)x_3^2=a_3(q^2-\beta)x_1x_2=b_3(q-\beta)x_2^2,\end{eqnarray*}
\begin{eqnarray*}c_2(1-\gamma^{-1})x_1x_3=d_2(1-q^{-1}\gamma^{-1})x_3^2=c_3(q^{-1}-q\beta^{-1})x_1x_2=d_3(q^{-1}-\beta^{-1})x_2^2.
\end{eqnarray*}
Therefore, we have
\begin{eqnarray*}
a_1\neq 0\Rightarrow b_1=0, \beta=q^{-1}, \gamma=1, \qquad
b_1\neq 0\Rightarrow a_1=0, \beta=q^{-2}, \gamma=q^{-1},\end{eqnarray*}
\begin{eqnarray*}
a_2\neq 0\Rightarrow b_2=0, \alpha=q, \gamma=1,\qquad b_2\neq 0\Rightarrow a_2=0, \alpha=1, \gamma=q^{-1},\end{eqnarray*}
\begin{eqnarray*}a_3\neq 0\Rightarrow b_3=0, \alpha=q, \beta=q^2,\qquad b_3\neq 0\Rightarrow a_3=0, \alpha=1, \beta=q,\end{eqnarray*}
\begin{eqnarray*}c_1\neq 0 \Rightarrow d_1=0, \beta=q^{-1}, \gamma=1,\qquad d_1\neq 0 \Rightarrow c_1=0, \beta=q^{-2}, \gamma=q^{-1},\end{eqnarray*}
\begin{eqnarray*}c_2\neq 0 \Rightarrow d_2=0, \alpha=q, \gamma=1,\qquad d_2\neq 0\Rightarrow c_2=0, \alpha=1, \gamma=q^{-1},\end{eqnarray*}
\begin{eqnarray*}c_3\neq 0 \Rightarrow d_3=0, \alpha=q, \beta=q^2,\qquad d_3\neq 0 \Rightarrow c_3=0, \alpha=1, \beta=q.
\end{eqnarray*}

Since
$
wt((M_{ef})_1)=\left[\begin{array}{ccc}
q^2\alpha & q^2\beta & q^2\gamma\\
q^{-2}\alpha &q^{-2}\beta & q^{-2}\gamma\end{array}\right]_1$, by the above discussion, for the 1-st homogeneous component, we only have the following possibilities 
(here, $\star_i$ in the position of $e(u)$ means that the $i$-th monomial of $e(u)$ following the $x_1$, $x_2$, $x_3$ order is nonzero, where
$u\in\{x_1, x_2, x_3\}$):
\begin{eqnarray*}
&&\left[\begin{array}{ccc}
\star_1& 0& 0\\
0  & 0 & 0 \end{array}\right]_1\Rightarrow \alpha=q^{-3}, \beta=q^{-1}, \gamma=1,\\
&&\left[\begin{array}{ccc}
\star_2& 0& 0\\
0  &  0 &0 \end{array}\right]_1\Rightarrow \alpha=q^{-3}, \beta=q^{-2}, \gamma=q^{-1},\\
&&\left[\begin{array}{ccc}
0&\star_1&0\\
0 & 0 &0 \end{array}\right]_1\Rightarrow \alpha=q, \beta=q^{-1}, \gamma=1,\\
&&\left[\begin{array}{ccc}
0&\star_2&0\\
0 & 0 &0 \end{array}\right]_1\Rightarrow \alpha=1, \beta=q^{-3}, \gamma=q^{-1},\\
&&\left[\begin{array}{ccc}
0&0 &\star_1\\
0 &  0 &0 \end{array}\right]_1\Rightarrow \alpha=q, \beta=q^{2}, \gamma=q^{-1},\\
&&\left[\begin{array}{ccc}
0& 0&\star_2\\
0  &0 &0 \end{array}\right]_1\Rightarrow \alpha=1, \beta=q, \gamma=q^{-1},\\
&&\left[\begin{array}{ccc}
0 & 0 &0\\
\star_1 & 0 & 0 \end{array}\right]_1\Rightarrow \alpha=q, \beta=q^{-1}, \gamma=1,\\
&&\left[\begin{array}{ccc}
0& 0& 0\\
\star_2 & 0 &0 \end{array}\right]_1\Rightarrow \alpha=q, \beta=q^{-2}, \gamma=q^{-1},\\
&&\left[\begin{array}{ccc}
0&0&0\\
0 & \star_1 &0 \end{array}\right]_1\Rightarrow \alpha=q, \beta=q^{3}, \gamma=1,\\
&&\left[\begin{array}{ccc}
0& 0&0\\
0& \star_2 &0 \end{array}\right]_1\Rightarrow \alpha=1, \beta=q, \gamma=q^{-1},\\
&&\left[\begin{array}{ccc}
0&0& 0\\
0 & 0 & \star_1 \end{array}\right]_1\Rightarrow \alpha=q, \beta=q^2, \gamma=q^{3},\\
&&\left[\begin{array}{ccc}
0&0&0\\
0 & 0 & \star_2 \end{array}\right]_1\Rightarrow \alpha=1, \beta=q, \gamma=q^{3},\\
&&\left[\begin{array}{ccc}
0& \star_1& 0\\
\star_1 & 0&0 \end{array}\right]_1\Rightarrow \alpha=q, \beta=q^{-1}, \gamma=1,\\
&&\left[\begin{array}{ccc}
0& 0&\star_2\\
0&\star_2& 0 \end{array}\right]_1\Rightarrow \alpha=1, \beta=q, \gamma=q^{-1},\\
&&\left[\begin{array}{ccc}
0 &0 &0\\
0  & 0 & 0 \end{array}\right]_1.\end{eqnarray*}

Obviously, if both the 0-th homogeneous component and the 1-th homogeneous component of $M_{ef}$ are nonzero, there are only two possibilities
\begin{eqnarray}\label{7}\left(\left[\begin{array}{ccc}
0& \star& 0\\
0&  0 & 0 \end{array}\right]_0, \left[\begin{array}{ccc}
0&0&0\\
\star_2 & 0 &0 \end{array}\right]_1\right) \Rightarrow \alpha=q, \beta=q^{-2}, \gamma=q^{-1},\end{eqnarray}
\begin{eqnarray}\label{8}\left(\left[\begin{array}{ccc}
0& 0& 0\\
0 & \star & 0 \end{array}\right]_0,\left[\begin{array}{ccc}
0& 0& \star_1\\
0 & 0 & 0 \end{array}\right]_1\right)\Rightarrow \alpha=q, \beta=q^2, \gamma=q^{-1}.\end{eqnarray}

Moreover, there are no possibilities when the 0-th homogeneous component of $M_{ef}$ is 0 and the 1-th homogeneous component of $M_{ef}$ has only one nonzero position.
The reasons are the same as those in \cite{dup/sin3}.

So, we only need to consider the following 11 possibilities\\
 $\left(\left[\begin{array}{ccc}
0 & 0&0\\
0 & 0 & 0 \end{array}\right]_0, \left[\begin{array}{ccc}
0 &0&0\\
0 &  0 & 0 \end{array}\right]_1\right)$, $\left(\left[\begin{array}{ccc}
\star & 0& 0\\
0 &  0 &0 \end{array}\right]_0, \left[\begin{array}{ccc}
0 & 0& 0\\
0 & 0 & 0 \end{array}\right]_1\right)$,\\
$\left(\left[\begin{array}{ccc}
0 & 0 &0\\
0 & 0 & \star \end{array}\right]_0, \left[\begin{array}{ccc}
0 &0 &0\\
0 & 0 &0 \end{array}\right]_1\right)$, $\left(\left[\begin{array}{ccc}
0 & 0 & 0\\
0 &  0 & 0 \end{array}\right]_0, \left[\begin{array}{ccc}
0 & \star_1 & 0\\
\star_1 &  0 &0 \end{array}\right]_1\right)$,\\
$\left(\left[\begin{array}{ccc}
0 & 0 &0\\
0 &  0 & 0 \end{array}\right]_0, \left[\begin{array}{ccc}
0 &0 & \star_2\\
0 &  \star_2 &0 \end{array}\right]_1\right)$, $\left(\left[\begin{array}{ccc}
0 & \star &0\\
0 & 0 &0 \end{array}\right]_0, \left[\begin{array}{ccc}
0 &0& 0\\
0 & 0 & 0 \end{array}\right]_1\right)$,\\
 $\left(\left[\begin{array}{ccc}
0&\star& 0\\
0 & 0 & 0 \end{array}\right]_0, \left[\begin{array}{ccc}
0&0&0\\
\star_2 &0 & 0 \end{array}\right]_1\right)$,
$\left(\left[\begin{array}{ccc}
0 &0 & 0\\
0 & \star &0 \end{array}\right]_0, \left[\begin{array}{ccc}
0 & 0 & 0\\
0 & 0 &0 \end{array}\right]_1\right)$,\\ $\left(\left[\begin{array}{ccc}
0&0& 0\\
0 & \star &0 \end{array}\right]_0,\left[\begin{array}{ccc}
0& 0&\star_1\\
0 & 0 & 0 \end{array}\right]_1\right),$
$\left(\left[\begin{array}{ccc}
0 & 0 & \star\\
0 & 0 & 0 \end{array}\right]_0, \left[\begin{array}{ccc}
0 &0 & 0\\
0 & 0 &0 \end{array}\right]_1\right)$,\\
$\left(\left[\begin{array}{ccc}
0 &0 &0\\
\star &  0 &0 \end{array}\right]_0, \left[\begin{array}{ccc}
0 & 0 &0\\
0 & 0 &0 \end{array}\right]_1\right)$.

For convenience, we denote these 11 kinds of cases in the above order by $(\ast_1)$, $\cdots$, $(\ast_{11})$ respectively.
\begin{lemma}\label{r8}
For Case $(\ast_1)$, all $U_q(sl(2))$-module algebra structures on $A_q(3)$ are as follows
\begin{eqnarray}
k(x_1)=\pm x_1, ~~~k(x_2)=\pm x_2,~~~k(x_3)=\pm x_3,\\
e(x_1)=e(x_2)=e(x_3)=f(x_1)=f(x_2)=f(x_3)=0.\end{eqnarray}
\end{lemma}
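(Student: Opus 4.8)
The plan is to show that in Case $(\ast_1)$, where both $(M_{ef})_0$ and $(M_{ef})_1$ vanish, the operators $e$ and $f$ must act as zero on every generator and $k$ must act as $\pm 1$. The starting observation is that, by the hypothesis of Case $(\ast_1)$, each of $e(x_i)$ and $f(x_i)$ has no component in degrees $0$ or $1$, so (if nonzero) it is a sum of monomials of total degree at least $2$. First I would record a degree-raising property. Using the module-algebra rules $e(ab)=ae(b)+e(a)k(b)$ and $f(ab)=k^{-1}(a)f(b)+f(a)b$, together with the fact that $k$ and $k^{-1}$ act diagonally and hence preserve the degree of every monomial, the standard Leibniz expansion on a degree-$d$ monomial $x_{j_1}\cdots x_{j_d}$ replaces a single generator $x_{j_l}$ (degree $1$) by $e(x_{j_l})$ (each monomial of degree $\ge 2$) while the remaining $d-1$ factors keep their degrees. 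Thus every resulting monomial has degree $\ge d+1$, so $e$ (and likewise $f$) maps any element of degree $d$ into elements of degree at least $d+1$.

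Next I would exploit the commutation relation $[e,f]=\frac{k-k^{-1}}{q-q^{-1}}$ applied to a generator $x_i$ with $k(x_i)=\alpha_i x_i$ (where $\alpha_1=\alpha$, $\alpha_2=\beta$, $\alpha_3=\gamma$); this gives
\[
(ef-fe)(x_i)=\frac{\alpha_i-\alpha_i^{-1}}{q-q^{-1}}\,x_i,
\]
an element of degree $1$ (or zero). On the other hand, by the degree-raising property established above, both $ef(x_i)$ and $fe(x_i)$ are either zero or supported entirely in degrees $\ge 3$, and hence so is their difference. Comparing the two descriptions forces $\frac{\alpha_i-\alpha_i^{-1}}{q-q^{-1}}=0$, that is, $\alpha_i^2=1$ and so $\alpha_i\in\{1,-1\}$ for every $i$. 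In other words $\alpha,\beta,\gamma\in\{1,-1\}$.

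Finally I would close with a weight argument. With $\alpha,\beta,\gamma\in\{1,-1\}$, every monomial $x_1^m x_2^n x_3^s$ has weight $\alpha^m\beta^n\gamma^s\in\{1,-1\}$. But from $k e=q^{2}e k$ one has $k(e(x_i))=q^2\alpha_i\,e(x_i)$, so $e(x_i)$ must be a weight vector of weight $q^2\alpha_i=\pm q^2$; since $q$ is not a root of unity, $\pm q^2\notin\{1,-1\}$, so no monomial of $A_q(3)$ can occur in $e(x_i)$, forcing $e(x_i)=0$. The identical reasoning with weight $q^{-2}\alpha_i$ gives $f(x_i)=0$. Together with $k(x_i)=\alpha_i x_i=\pm x_i$, this is exactly the asserted structure.

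I expect the main technical point to be the degree-raising step: one must check through the full Leibniz expansion that no cancellation or degree drop can occur, which rests precisely on $k^{\pm 1}$ acting diagonally and therefore degree-preservingly. Once that is secured, the commutator identity and the weight bookkeeping are routine; the whole argument uses only the standing assumption that $q$ is not a root of unity and parallels the treatment of the corresponding case for the quantum plane in \cite{dup/sin3}.
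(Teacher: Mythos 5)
Your proof is correct and takes essentially the same approach as the paper: the paper's own proof is just a citation of Theorem 4.2 in \cite{dup/sin3}, whose argument is precisely this combination of the degree-raising property of $e$ and $f$ under the Case $(\ast_1)$ hypothesis, the commutator $[e,f]=\frac{k-k^{-1}}{q-q^{-1}}$ applied to generators to force $\alpha_i^{2}=1$, and the weight comparison (using that $q$ is not a root of unity, so $q^{\pm 2}\alpha_i\notin\{1,-1\}$) to conclude $e(x_i)=f(x_i)=0$. In effect you have supplied, for the three-variable case, the details that the paper delegates to that reference.
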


\begin{proof}
The proof is similar to that in Theorem 4.2 in \cite{dup/sin3}.
\end{proof}
\begin{lemma}\label{10}
For Case $(\ast_2)$, all $U_q(sl(2))$-module algebra structures on $A_q(3)$ are
\begin{eqnarray}
k(x_1)=q^{-2}x_1, ~~k(x_2)=q^{-1}x_2,~~~k(x_3)=q^{-1}x_3,\\
e(x_1)=a_0,~~e(x_2)=0,~~~e(x_3)=0,\\
f(x_1)=-qa_0^{-1}x_1^2,\\
f(x_2)=-qa_0^{-1}x_1x_2+\xi_1x_2x_3^2+\xi_2x_2^3+\xi_3x_3^3,\\
f(x_3)=-qa_0^{-1}x_1x_3+\xi_4x_2x_3^2+(1+q+q^2)\xi_2x_2^2x_3-q^{-1}\xi_1x_3^3,
\end{eqnarray}
where $a_0\in \mathbb{C}\backslash\{0\}$, and $\xi_1$, $\xi_2$, $\xi_3$, $\xi_4\in \mathbb{C}$.
\end{lemma}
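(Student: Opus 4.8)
The plan is to exploit that, in Case $(\ast_2)$, the actions of $k$ and $e$ are already completely fixed --- namely $\alpha=q^{-2}$, $\beta=\gamma=q^{-1}$ (so $k(x_1)=q^{-2}x_1$, $k(x_2)=q^{-1}x_2$, $k(x_3)=q^{-1}x_3$) together with $e(x_1)=a_0\neq 0$ and $e(x_2)=e(x_3)=0$ --- so that only the action of $f$ remains to be determined. Since $kfk^{-1}=q^{-2}f$, each $f(x_i)$ is a $k$-eigenvector of weight $q^{-2}\,wt(x_i)$; thus $f(x_1)$ has weight $q^{-4}$ while $f(x_2),f(x_3)$ have weight $q^{-3}$. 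I would begin by listing all monomials $x_1^mx_2^nx_3^s$ of each of these weights (i.e.\ solving $2m+n+s=4$ and $2m+n+s=3$), which yields a finite linear ansatz for the three images. The key structural point is that $f$ is weight-homogeneous but \emph{not} degree-homogeneous: the weight-$q^{-4}$ space contains the degree-$2$ monomial $x_1^2$ alongside the degree-$4$ monomials $x_2^4,x_2^3x_3,\dots,x_3^4$, and it is exactly this phenomenon that allows the higher-order parameters $\xi_i$ to appear.

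Next I would impose the relation $[e,f]=(k-k^{-1})/(q-q^{-1})$ on each generator $x_i$. Because $e$ is a constant on $x_1$ and annihilates $x_2,x_3$, the element $e(f(x_i))$ detects only those monomials of the ansatz that carry a factor $x_1$, and is straightforward to compute. Matching $e(f(x_i))$ against $(k-k^{-1})/(q-q^{-1})(x_i)$, which equals $-(q+q^{-1})x_1$ for $i=1$ and $-x_i$ for $i=2,3$, pins the leading coefficients to $-qa_0^{-1}$, forces the vanishing of the ``wrong'' mixed monomials (the terms $x_1x_2^2,x_1x_2x_3,x_1x_3^2$ in $f(x_1)$, the $x_1x_3$ term of $f(x_2)$, and the $x_1x_2$ term of $f(x_3)$), and leaves only the pure $x_2,x_3$ monomials of each image undetermined.

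The remaining constraints come from compatibility with the defining relations $x_ix_j=qx_jx_i$ of $A_q(3)$ in their $f$-form, i.e.\ from equations (\ref{e1}), (\ref{e3}) and (\ref{5}); the $e$-forms (\ref{4}), (\ref{e2}), (\ref{e4}) and the (absent) Serre relations of $sl(2)$ impose nothing new, since $e$ is already of the required simple form. As these equations mix total degrees, I would split each into its homogeneous components: the degree-$3$ parts merely reconfirm the leading terms, the degree-$5$ parts of (\ref{e1}) and (\ref{e3}) together force every pure $x_2,x_3$ term of $f(x_1)$ to vanish (so $f(x_1)=-qa_0^{-1}x_1^2$), and the degree-$4$ part of (\ref{5}) yields the decisive linear system after normal-ordering. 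Collecting coefficients of $x_2^4,x_2^3x_3,x_2^2x_3^2,x_2x_3^3$ gives $s_1=0$, $r_2=0$, $s_2=(1+q+q^2)\xi_2$ and $s_4=-q^{-1}\xi_1$, while the coefficients of $x_2^3$, $x_2x_3^2$, $x_3^3$ in $f(x_2)$ and the $x_2x_3^2$ coefficient of $f(x_3)$ survive as the four free parameters $\xi_2,\xi_1,\xi_3,\xi_4$. That $q$ is not a root of unity (so each factor $q^k-1$ and $q^{-1}-q$ is nonzero) is used throughout to draw these conclusions.

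Finally, for the converse I would observe that, for every $\xi_i\in\mathbb{C}$ and $a_0\neq 0$, the displayed formulas satisfy the relations (\ref{w5})--(\ref{w6}) on the generators $x_1,x_2,x_3$, whence by Lemma \ref{w8} they genuinely define a $U_q(sl(2))$-module algebra structure on $A_q(3)$. The main obstacle is the bookkeeping in the degree-split of (\ref{5}): one must carefully normal-order products such as $x_2^2x_3x_2$ and $x_2x_3^2x_2$ via $x_3x_2=qx_2x_3$, and it is precisely the cancellation of certain contributions --- so that the $x_2x_3^2$-term of $f(x_3)$ and the $x_3^3$-term of $f(x_2)$ drop out of the degree-$4$ equation --- together with the two genuine relations $s_2=(1+q+q^2)\xi_2$ and $s_4=-q^{-1}\xi_1$ that produces exactly the four-parameter family with its slightly asymmetric coefficients, rather than a larger or more symmetric one.
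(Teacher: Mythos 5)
Your proposal is correct and follows essentially the same route as the paper: a weight-based ansatz for $f(x_1),f(x_2),f(x_3)$ (monomials of weight $q^{-4}$ and $q^{-3}$), then imposing the $q$-commutation relations (\ref{e1}), (\ref{e3}), (\ref{5}) and the relation $[e,f]=(k-k^{-1})/(q-q^{-1})$ on generators, with Lemma \ref{w8} handling sufficiency. The only difference is cosmetic --- you impose the commutator first and the $A_q(3)$ relations second, while the paper does the reverse --- and both orders yield the same four-parameter family.
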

\begin{proof}
Since $wt(M_{ef})\bowtie\left[\begin{array}{ll}
1 \qquad q\qquad q\\
q^{-4} \quad  q^{-3} \quad q^{-3} \end{array}\right]$ and $\alpha=q^{-2}$, $\beta=q^{-1}$, $\gamma=q^{-1}$, we must have
$e(x_1)=a_0$, $e(x_2)=0$, $e(x_3)=0$. With the same reason, $f(x_1)$, $f(x_2)$, $f(x_3)$ must be of the following forms:
$$f(x_1)=u_1x_1^2+u_2x_1x_2^2+u_3x_1x_3^2+u_4x_1x_2x_3+u_5x_2x_3^3+u_6x_2^2x_3^2+u_7x_2^3x_3+u_8x_2^4+u_9x_3^4,$$
$$f(x_2)=v_1x_1x_2+v_2x_1x_3+v_3x_2x_3^2+v_4x_2^2x_3+v_5x_2^3+v_6x_3^3,$$
$$f(x_3)=w_1x_1x_2+w_2x_1x_3+w_3x_2x_3^2+w_4x_2^2x_3+w_5x_2^3+w_6x_3^3,$$
where these coefficients are in $\mathbb{C}$. Then, we consider (\ref{4})-(\ref{5}). Taking $e(x_1)$, $e(x_2)$, $e(x_3)$,
$f(x_1)$, $f(x_2)$, $f(x_3)$ into the six equalities, by comparing the coefficients, we obtain that
$$f(x_1)=u_1x_1^2,$$
$$f(x_2)=u_1x_1x_2+v_3x_2x_3^2+v_5x_2^3+v_6x_3^3,$$
$$f(x_3)=u_1x_1x_3+w_3x_2x_3^2+(1+q+q^2)v_5x_2^2x_3-q^{-1}v_3x_3^3.$$
Using $ef(u)-fe(u)=\frac{k-k^{-1}}{q-q^{-1}}(u),$ for any $u\in \{x_1, x_2, x_3\}$, we get
$u_1=-qa_0^{-1}$.
So we proved the lemma.
\end{proof}
\begin{lemma}\label{S1}
For Case $(\ast_3)$, all $U_q(sl(2))$-module algebra structures on $A_q(3)$ are as follows
\begin{eqnarray}
k(x_1)=qx_1, ~~k(x_2)=qx_2,~~~k(x_3)=q^{2}x_3,\\
e(x_1)=-qf_0^{-1}x_1x_3+\mu_1x_1^2x_2-q\mu_2x_1^3+(1+q+q^2)\mu_3x_1x_2^2,\\
e(x_2)=-qf_0^{-1}x_2x_3+\mu_2x_1^2x_2+\mu_3x_2^3+\mu_4x_1^3,\\
e(x_3)=-qf_0^{-1}x_3^2,\\
f(x_1)=0,~~f(x_2)=0,~~~f(x_3)=f_0,
\end{eqnarray}
where $f_0\in \mathbb{C}\backslash\{0\}$, and $\mu_1$, $\mu_2$, $\mu_3$ $\mu_4\in \mathbb{C}$.
\end{lemma}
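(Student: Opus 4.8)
The plan is to determine the $U_q(sl(2))$-module algebra structures in Case $(\ast_3)$ by the same weight-and-projection method used for the preceding lemmas. By the definition of Case $(\ast_3)$, the degree-zero part of $M_{ef}$ places the single nonzero scalar in the $f(x_3)$ position, so we start from $\alpha=q$, $\beta=q$, $\gamma=q^2$ and $f(x_1)=f(x_2)=0$, $f(x_3)=f_0$ for some $f_0\in\mathbb{C}\backslash\{0\}$. First I would compute $wt(M_{ef})$ from the given weights of $\alpha$, $\beta$, $\gamma$ via the $\bowtie$-relation; this forces each entry of $e$ and $f$ to be a sum of monomials whose weight matches $q^2\alpha$, $q^2\beta$, $q^2\gamma$ (for the $e$-row) and $q^{-2}\alpha$, $q^{-2}\beta$, $q^{-2}\gamma$ (for the $f$-row). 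Since $q$ is not a root of unity, each eigenvalue constraint $\alpha^i\beta^j\gamma^\ell = $ (prescribed value) has only finitely many monomial solutions in the relevant degrees, which pins down the general shape of $e(x_1)$, $e(x_2)$, $e(x_3)$ as linear combinations of finitely many admissible monomials with unknown scalar coefficients, exactly as in the setup for Lemma \ref{10}.

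Next I would impose the commutation relations \eqref{4}--\eqref{5}, obtained by applying $e$ and $f$ to the defining relation \eqref{e61}. With $f$ already completely known and $e$ written in terms of undetermined coefficients, substituting into these six equalities and comparing coefficients of each monomial in $A_q(3)$ yields a linear system that eliminates most unknowns and expresses the survivors in terms of a few free parameters $\mu_1,\mu_2,\mu_3,\mu_4$. This is the step that reproduces the stated forms of $e(x_1)$, $e(x_2)$, $e(x_3)$, including the specific numerical coefficients such as $-qf_0^{-1}$ and the $(1+q+q^2)$ factor. Finally I would enforce the Serre-type commutator relation $ef(u)-fe(u)=\frac{k-k^{-1}}{q-q^{-1}}(u)$ for $u\in\{x_1,x_2,x_3\}$; since $f$ is the simple map sending $x_3\mapsto f_0$ and the other generators to $0$, this relation is easy to evaluate and fixes the leading coefficient (the coefficient of the $-qf_0^{-1}$ terms) while leaving $\mu_1,\mu_2,\mu_3,\mu_4$ unconstrained. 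By Lemma \ref{w8}, verifying all relations on the generators $x_1,x_2,x_3$ suffices to guarantee a genuine module-algebra structure on all of $A_q(3)$.

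The main obstacle I anticipate is the sheer bookkeeping in the coefficient comparison of step two: because $e(x_1)$ and $e(x_2)$ may a priori contain cubic monomials (degree-2 homogeneous components beyond the linear ones appearing in $(\ast_2)$), the candidate expansions are longer, and the relations \eqref{4}--\eqref{5} couple coefficients across different generators in a way that is easy to mishandle. The structure should nonetheless mirror Lemma \ref{10} closely, with the roles of $e$ and $f$ essentially interchanged (here $f$ is the trivial-looking map and $e$ carries the higher-degree freedom), so I would organize the computation by fixing the weight-admissible monomial basis for each entry first, then solving the linear constraints degree by degree to isolate $\mu_1,\mu_2,\mu_3,\mu_4$ as the genuinely free parameters.
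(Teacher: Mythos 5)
Your proposal is correct and takes essentially the same approach as the paper: the paper's proof of this lemma is simply ``similar to that in Lemma \ref{10}'', i.e.\ use the weights $\alpha=q$, $\beta=q$, $\gamma=q^2$ to pin down the admissible monomials in each entry of $M_{ef}$, impose the six relations (\ref{4})--(\ref{5}) to couple the coefficients and isolate the free parameters $\mu_1,\dots,\mu_4$, and then use $ef(u)-fe(u)=\frac{k-k^{-1}}{q-q^{-1}}(u)$ on the generators to fix the coefficient $-qf_0^{-1}$, with Lemma \ref{w8} guaranteeing that checking on generators suffices. This is exactly your plan (with the roles of $e$ and $f$ interchanged relative to Lemma \ref{10}), so there is no gap.
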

\begin{proof}
The proof is similar to that in Lemma \ref{10}.
\end{proof}

\begin{lemma}\label{t1}
For Case $(\ast_4)$, to satisfy (\ref{4})-(\ref{5}), the actions of $k$, $e$, $f$ must be of the following form
\normalsize{
\begin{eqnarray*}
&&k(x_1)=qx_1,~~k(x_2)=q^{-1}x_2,~~k(x_3)=x_3,\\
&&e(x_1)=\sum_{\substack{m\geq 0,p\geq 0\\p\neq m+3}}a_{m,p}x_1^{m+3}x_2^mx_3^p+\sum_{m\geq 0}d_mx_1^{m+3}x_2^mx_3^{m+3},\\
&&e(x_2)=a_2x_1+\sum_{\substack{m\geq 0,p\geq 0\\p\neq m+3}}b_{m,p}x_1^{m+2}x_2^{m+1}x_3^p,\\
&&e(x_3)=\sum_{\substack{m\geq 0,p\geq 0\\p\neq m+3}}c_{m,p}x_1^{m+2}x_2^mx_3^{p+1}+\sum_{m\geq 0}e_mx_1^{m+2}x_2^mx_3^{m+4},\\
&&f(x_1)=c_1x_2+\sum_{\substack{m\geq 0, p\geq 0\\p\neq m+1}}d_{m,p}x_1^{m+1}x_2^{m+2}x_3^p+\sum_{m\geq 0}h_mx_1^{m+1}x_2^{m+2}x_3^{m+1},\\
&&f(x_2)=\sum_{\substack{m\geq 0, p\geq 0\\p\neq m+1}}e_{m,p}x_1^mx_2^{m+3}x_3^p,\\
&&f(x_3)=\sum_{\substack{m\geq 0, p\geq 0\\p\neq m+1}}g_{m,p}x_1^mx_2^{m+2}x_3^{p+1}+\sum_{m\geq 0}g_mx_1^mx_2^{m+2}x_3^{m+2},
\end{eqnarray*}
}
where $a_2, c_1\in \mathbb{C}\backslash\{0\}$, other coefficients are in $\mathbb{C}$ and $\frac{a_{m,p}}{b_{m,p}}=-\frac{(m+p+1)_q}{q^{p-1}(m+3-p)_q}$, $\frac{b_{m,p}}{c_{m,p}}=\frac{q^{p-1}(m+3-p)_q}{(2m+2)_q}$, $\frac{d_m}{e_m}=-\frac{(2m+4)_q}{(2m+2)_q}$, $\frac{d_{m,p}}{e_{m,p}}=-\frac{(m+p+3)_q}{q^{p+1}(m+1-p)_q}$, $\frac{d_{m,p}}{g_{m,p}}=-\frac{(m+p+3)_q}{q(2m+2)_q}$, $\frac{h_m}{g_m}=-\frac{(2m+4)_q}{q(2m+2)_q}$.

Especially, there are the following $U_q(sl(2))$-module algebra structures on $A_q(3)$
\begin{eqnarray}
\label{r1}k(x_1)=qx_1,~~k(x_2)=q^{-1}x_2,~~k(x_3)=x_3,\\
\label{r2}e(x_1)=0,~~e(x_2)=a_2x_1,~~e(x_3)=0,\\
\label{r3}f(x_1)=a_2^{-1}x_2,~~f(x_2)=0,~~f(x_3)=0,
\end{eqnarray}
where $a_2\in \mathbb{C}\backslash\{0\}$.
\end{lemma}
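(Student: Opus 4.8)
We are in Case $(\ast_4)$, where the weight constraints from the zeroth and first homogeneous components force
$$\alpha=q,\quad \beta=q^{-1},\quad \gamma=1,$$
together with $(e(x_2))_1=a_2x_1$ and $(f(x_1))_1=c_1x_2$ as the only surviving degree-$1$ pieces. The goal is to determine the \emph{complete} form of the action of $k$, $e$, $f$ on all three generators subject only to the relations~(\ref{4})--(\ref{5}) (the six equalities coming from applying $e$ and $f$ to the commutation relations $x_ix_j=qx_jx_i$). The statement splits into two parts: a general structural form (which monomials may appear, with prescribed coefficient ratios), and the identification of a distinguished ``small'' family~(\ref{r1})--(\ref{r3}).

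**Main strategy: weight bookkeeping plus projection onto homogeneous components.** The plan is to use the weight machinery already developed in the section. Since $k$ acts diagonally with $\operatorname{wt}(x_1)=q,\operatorname{wt}(x_2)=q^{-1},\operatorname{wt}(x_3)=1$, every monomial $x_1^mx_2^nx_3^s$ has weight $q^{m-n}$. The defining relation $\operatorname{wt}(M)\bowtie\left[\begin{smallmatrix}\alpha&\beta&\gamma\\ q^2\alpha&q^2\beta&q^2\gamma\\ q^{-2}\alpha&q^{-2}\beta&q^{-2}\gamma\end{smallmatrix}\right]$ tells us exactly which weights each entry $e(x_i),f(x_i)$ is allowed to carry. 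For instance $\operatorname{wt}(e(x_1))=q^2\alpha=q^3$, so every monomial in $e(x_1)$ must satisfy $m-n=3$; writing $m=n+3$ and introducing $m$ (here renamed) and $p$ for the free exponents of $x_2$ and $x_3$ gives precisely the general shape $\sum a_{m,p}\,x_1^{m+3}x_2^mx_3^p$ written in the statement. The same weight count applied to $e(x_2),e(x_3)$ (weight $q$) and to $f(x_1),f(x_2),f(x_3)$ (weights $q^{-1},q^{-3},q^{-1}$ respectively) pins down the monomial supports up to the free exponents, and explains why $a_2x_1$ and $c_1x_2$ appear as the only weight-matching \emph{linear} terms. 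First I would carry out this weight enumeration carefully for all six entries, thereby reducing the problem to determining the scalar coefficients.

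**Imposing the six relations to extract the coefficient ratios.** Next I would substitute these weight-constrained Ansätze into~(\ref{4})--(\ref{5}) and project onto each homogeneous component of $A_q(3)$, exactly as in the degree-$0$ and degree-$1$ analyses carried out above and as in \cite{dup/sin3}. Each relation, once expanded using $x_ix_j=qx_jx_i$ to reorder monomials into the normal form $x_1^{m}x_2^{n}x_3^{s}$, produces a family of scalar equations relating $a_{m,p},b_{m,p},c_{m,p},d_m,e_m,d_{m,p},e_{m,p},g_{m,p},h_m,g_m$. These linear relations are what yield the stated ratios such as $\frac{a_{m,p}}{b_{m,p}}=-\frac{(m+p+1)_q}{q^{p-1}(m+3-p)_q}$ and $\frac{d_m}{e_m}=-\frac{(2m+4)_q}{(2m+2)_q}$; the appearance of $q$-integers $(k)_q$ is a direct consequence of telescoping sums like $1+q+\cdots+q^{k-1}$ that arise when moving a generator past a power of another generator. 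The exclusion conditions $p\neq m+3$ (for $e$) and $p\neq m+1$ (for $f$) mark the exponents where a coefficient like $(m+3-p)_q$ or $(m+1-p)_q$ vanishes, so that the corresponding monomials must be treated as separate free parameters $d_m,e_m$ and $h_m,g_m$ — this bookkeeping of degenerate exponents is the subtle part.

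**The distinguished family and the anticipated obstacle.** For the second assertion, I would observe that setting all higher coefficients to zero and keeping only the linear terms $e(x_2)=a_2x_1$, $f(x_1)=c_1x_2$ with everything else zero automatically satisfies every relation in~(\ref{4})--(\ref{5}); the relation $[e,f]=\frac{k-k^{-1}}{q-q^{-1}}$, checked on generators via Lemma~\ref{w8}, forces $c_1=a_2^{-1}$, giving~(\ref{r1})--(\ref{r3}). The genuine difficulty, and the step I expect to consume most of the effort, is proving that the six relations~(\ref{4})--(\ref{5}) impose \emph{only} the stated coefficient ratios and no further constraints — i.e.\ that the displayed infinite family is consistent rather than collapsing. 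This requires verifying that the linear system obtained from projecting onto \emph{all} homogeneous degrees is compatible and that the ratios from different relations agree on overlapping coefficients (for example, the chain $a_{m,p}\to b_{m,p}\to c_{m,p}$ must be consistent with the relation that couples $e(x_1)$ and $e(x_3)$ directly). Managing this consistency across the infinitely many degrees, while correctly isolating the exceptional exponents $p=m+3$ and $p=m+1$, is where the computation is most delicate; the weight constraints are what keep the system triangular enough to resolve degree by degree.
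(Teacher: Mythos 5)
Your proposal is correct and follows essentially the same route as the paper's proof: from $\alpha=q$, $\beta=q^{-1}$, $\gamma=1$ one reads off $wt(M_{ef})\bowtie\left[\begin{smallmatrix}q^{3}&q&q^{2}\\ q^{-1}&q^{-3}&q^{-2}\end{smallmatrix}\right]$, uses the fact that a monomial $x_1^ax_2^bx_3^c$ has weight $q^{a-b}$ to pin down the admissible monomials in each entry of $M_{ef}$, then imposes (\ref{4})--(\ref{5}) degree by degree to extract the coefficient ratios (with the excluded exponents $p=m+3$, $p=m+1$ correctly identified as the degenerate cases requiring separate parameters), and finally verifies the truncated family (\ref{r1})--(\ref{r3}) directly, with $ef-fe=\frac{k-k^{-1}}{q-q^{-1}}$ forcing $c_1=a_2^{-1}$. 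The only slips are in your parenthetical weight values --- $wt(e(x_3))=q^{2}\gamma=q^{2}$, not $q$, and $wt(f(x_3))=q^{-2}\gamma=q^{-2}$, not $q^{-1}$ --- but these do not affect the method, which you apply correctly in the worked example for $e(x_1)$.
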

\begin{proof}
In this case, we get $\alpha=q$, $\beta=q^{-1}$, $\gamma=1$. Therefore, $wt(M_{ef})
\bowtie \left[\begin{array}{ccc}
q^3& q & q^2\\
q^{-1} &q^{-3} &q^{-2}\end{array}\right]$. Since $wt(e(x_1))=q^3$, $wt(e(x_2))=q$ and $wt(e(x_3))=q^2$, using the equalities (\ref{4}), (\ref{e2}) and (\ref{e4}) and by some computations, we can obtain $e(x_1)$, $e(x_2)$ and $e(x_3)$  as the forms in the lemma. Similarly, we also can determine the forms of $f(x_1)$, $f(x_2)$ and $f(x_3)$ in the lemma.

Moreover, it is easy to check that (\ref{r1})-(\ref{r3}) determine the module-algebra structures of $U_q(sl(2))$ on $A_q(3)$.
\end{proof}
\begin{lemma}\label{tt1}
For Case $(\ast_5)$, to satisfy (\ref{4})-(\ref{5}), the actions of $U_q(sl(2))$ on $A_q(3)$ must be of the following form
\normalsize{
\begin{eqnarray*}
&&k(x_1)=x_1,~~k(x_2)=qx_2,~~k(x_3)=q^{-1}x_3,\\
&&e(x_1)=\sum_{\substack{m\geq 0, p\geq 0\\p\neq m+1}}\widetilde{a_{m,p}}x_1^{p+1}x_2^{2+m}x_3^m+\sum_{m\geq 0}\widetilde{a_m}x_1^{m+2}x_2^{m+2}x_3^m,\\
&&e(x_2)=\sum_{\substack{m\geq 0, p\geq 0\\p\neq m+1}}\widetilde{b_{m,p}}x_1^px_2^{3+m}x_3^m,\\
&&e(x_3)=b_3x_2+\sum_{\substack{m\geq 0, p\geq 0\\p\neq m+1}}\widetilde{c_{m,p}}x_1^px_2^{m+2}x_3^{m+1}+\sum_{m\geq 0}\widetilde{c_m}x_1^{m+1}x_2^{m+2}x_3^{m+1},\\
&&f(x_1)=\sum_{\substack{m\geq 0, p\geq 0\\p\neq m+3}}\widetilde{d_{m,p}}x_1^{p+1}x_2^mx_3^{m+2}+\sum_{m\geq 0}\widetilde{d_m}x_1^{m+4}x_2^mx_3^{m+2},\\
&&f(x_2)=d_2x_3+\sum_{\substack{m\geq 0, p\geq 0\\p\neq m+3}}\widetilde{e_{m,p}}x_1^px_2^{m+1}x_3^{m+2},\\
&& f(x_3)=\sum_{\substack{m\geq 0, p\geq 0\\p\neq m+3}}\widetilde{g_{m,p}}x_1^px_2^mx_3^{m+3}+\sum_{m \geq 0}\widetilde{g_m}x_1^{m+3}x_2^mx_3^{m+3},
\end{eqnarray*}}
where $b_3$, $d_2\in \mathbb{C}\backslash\{0\}$, other coefficients are in $\mathbb{C}$ and $\frac{\widetilde{a_{m,p}}}{\widetilde{b_{m,p}}}=\frac{(2m+2)_q}{q^p(m-p+1)_q}$, $\frac{\widetilde{a_{m,p}}}{\widetilde{c_{m,p}}}=-\frac{q(2m+2)_q}{(m+p+3)_q}$, $\frac{\widetilde{a_m}}{\widetilde{c_m}}=-\frac{q(2m+2)_q}{(2m+4)_q}$, $\frac{\widetilde{d_{m,p}}}{\widetilde{e_{m,p}}}=\frac{(2m+2)_q}{q^{p-1}(m-p+3)_q}$, $\frac{\widetilde{d_{m,p}}}{\widetilde{g_{m,p}}}=-\frac{(2m+2)_q}{(m+p+1)_q}$, $\frac{\widetilde{d_m}}{\widetilde{g_m}}=-\frac{(2m+2)_q}{(2m+4)_q}$.

There are the following $U_q(sl(2))$-module algebra structures on $A_q(3)$
\begin{eqnarray}
k(x_1)=x_1,~~k(x_2)=qx_2,~~k(x_3)=q^{-1}x_3,\\
e(x_1)=0,~~e(x_2)=0,~~e(x_3)=b_3x_2,\\
f(x_1)=0,~~f(x_2)=b_3^{-1}x_3,~~f(x_3)=0,
\end{eqnarray}
where $b_3\in \mathbb{C}\backslash\{0\}$.
\end{lemma}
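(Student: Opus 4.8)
The plan is to follow the same weight-analysis strategy that Case $(\ast_4)$ (Lemma \ref{t1}) uses, since Case $(\ast_5)$ has the symmetric structure with the roles of $x_2$ and $x_3$ essentially interchanged. In this case the $0$-th homogeneous component forces $\alpha=1$, $\beta=q$, $\gamma=q^{-1}$, which by Proposition \ref{w3} fixes $k(x_1)=x_1$, $k(x_2)=qx_2$, $k(x_3)=q^{-1}x_3$ and hence the weight constraints
\begin{eqnarray*}
wt(M_{ef})\bowtie\left[\begin{array}{ccc}
q^2 & q^3 & q\\
q^{-2} & q^{-1} & q^{-3}\end{array}\right].
\end{eqnarray*}
First I would write each of the six entries $e(x_i)$, $f(x_i)$ as an arbitrary linear combination of monomials $x_1^ax_2^bx_3^c$ whose weight $\alpha^a\beta^b\gamma^c=q^{b-c}$ matches the prescribed value. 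For instance $wt(e(x_3))=q^{-3}$ forces $b-c=-3$, and the presence of the $\star_2$ entry records that the lowest such monomial $x_2$ (from $b=1,c=0$ together with $a=0$) occurs with nonzero coefficient $b_3$; similarly $wt(f(x_2))=q^{-1}$ gives the distinguished linear term $d_2x_3$. This reduces the problem to determining which coefficients of the admissible monomials can be nonzero and how they are related.

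Next I would substitute these general weight-homogeneous expressions into the six commutation constraints (\ref{4})--(\ref{5}) coming from applying $e$ and $f$ to the defining relation (\ref{e61}). Each relation, after collecting coefficients of the individual monomials in $A_q(3)$, yields a system of linear recurrences among the unknown coefficients. The bookkeeping separates naturally into the ``generic'' families indexed by $(m,p)$ with $p\neq m+1$ (respectively $p\neq m+3$ for the $f$-side) and the ``exceptional'' diagonal families indexed by a single integer $m$, exactly mirroring the split seen in Lemma \ref{t1}. Solving these recurrences should produce the stated proportionality relations such as $\frac{\widetilde{a_{m,p}}}{\widetilde{b_{m,p}}}=\frac{(2m+2)_q}{q^p(m-p+1)_q}$ and $\frac{\widetilde{d_m}}{\widetilde{g_m}}=-\frac{(2m+2)_q}{(2m+4)_q}$, the $q$-integer numerators and denominators arising from the geometric-series factors $(1-q^k)/(1-q)$ that appear when one commutes powers of the generators past one another.

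The main obstacle will be the organization of this coefficient-comparison: there are six functional equations, each generator-action involves two infinite families of monomials, and one must verify that the constraints are mutually consistent (no family is over-determined into vanishing) rather than merely necessary. I would exploit the $x_2\leftrightarrow x_3$ symmetry with Case $(\ast_4)$ to transport most of the algebra rather than redo it from scratch, checking only that the asymmetry introduced by the relation $x_ix_j=qx_jx_i$ for $i>j$ is correctly tracked (this is the source of the explicit powers of $q$ in the proportionality ratios). Finally, to exhibit the concrete module-algebra structures displayed at the end, I would specialize all coefficients to zero except $b_3$ and $d_2$ (with $d_2=b_3^{-1}$), reducing the actions to $e(x_3)=b_3x_2$, $f(x_2)=b_3^{-1}x_3$ and all others zero, and then invoke Lemma \ref{w8} to confirm that the Serre-type and commutation relations of $U_q(sl(m+1))$ need only be checked on the generators $x_1,x_2,x_3$, which is a short direct verification using $[e,f]=\frac{k-k^{-1}}{q-q^{-1}}$.
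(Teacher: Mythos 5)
Your proposal takes essentially the same route as the paper: the paper's proof of this lemma is simply a reference to the weight-analysis argument of Lemma \ref{t1} (Case $(\ast_4)$), which is exactly what you carry out, down to exploiting the $x_2\leftrightarrow x_3$ symmetry, deriving the proportionality ratios by coefficient comparison in (\ref{4})--(\ref{5}), and verifying the displayed special structure directly via $ef(u)-fe(u)=\frac{k-k^{-1}}{q-q^{-1}}(u)$ together with Lemma \ref{w8}. One typographical slip: $wt(e(x_3))=q^{2}\gamma=q$ (not $q^{-3}$, which is $wt(f(x_3))$), so the forced condition is $b-c=1$, which is what your own weight matrix and the term $b_3x_2$ actually reflect.
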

\begin{proof}
The proof is similar to that in Lemma \ref{t1}.
\end{proof}

\begin{lemma}\label{t3}
For Case $(\ast_6)$ and Case $(\ast_7)$, to satisfy (\ref{4})-(\ref{5}), the actions of $k$, $e$ and $f$ on $A_q(3)$ are
\normalsize{
\begin{eqnarray*}
k(x_1)&=&qx_1,~~k(x_2)=q^{-2}x_2,~~k(x_3)=q^{-1}x_3,\\
e(x_1)&=&\sigma x_1^3+\sum_{n\geq 0}\sigma_nx_1^{2n+5}x_2^{n+1}+\sum_{p\geq 0}\widetilde{\sigma_p}x_1^{p+4}x_3^{p+1}\\
&&+\sum_{n\geq 0,p\geq 0}\sigma_{n,p}x_1^{2n+p+6}x_2^{n+1}x_3^{p+1},\\
e(x_2)&=&b_0+\rho x_1^2x_2+\sum_{n\geq 0}\rho_nx_1^{2n+4}x_2^{n+2}+\sum_{p\geq 0}\widehat{\rho_p}x_1^{p+3}x_2x_3^{p+1}\\
&&+\sum_{n\geq 0,p\geq 0}\rho_{n,p}x_1^{2n+p+5}x_2^{n+2}x_3^{p+1},\\
e(x_3)&=&\tau x_1^2x_3+\sum_{n\geq 0}\tau_nx_1^{2n+4}x_2^{n+1}x_3+\sum_{p\geq 0}\widetilde{\tau_p}x_1^{p+3}x_3^{p+2}\\
&&+\sum_{n\geq 0,p\geq 0}\tau_{n,p}x_1^{2n+p+5}x_2^{n+1}x_3^{p+2},\\
f(x_1)&=&d_1x_3+\sum_{p\geq 0}\lambda_px_1^{p+1}x_3^{p+2}+\sum_{n\geq 0}\widetilde{\lambda_n}x_1^{2n+1}x_2^{n+1}+\sum_{n\geq 0}\widehat{\lambda_n}x_1^{2n+2}x_2^{n+1}x_3\\
&&+\sum_{n\geq 0,p\geq 0}\lambda_{n,p}x_1^{2n+p+3}x_2^{n+1}x_3^{p+2},\\
f(x_2)&=&\sum_{n\geq 0}\widetilde{\nu_n}x_1^{2n}x_2^{n+2}+\sum_{n\geq 0}\widehat{\nu_n}x_1^{2n+1}x_2^{n+2}x_3+\sum_{n\geq 0,p\geq 0}\nu_{n,p}x_1^{2n+p+2}x_2^{n+2}x_3^{p+2},\\
f(x_3)&=&\sum_{p\geq 0}\omega_px_1^px_3^{p+3}+\sum_{n\geq 0}\widetilde{\omega_n}x_1^{2n}x_2^{n+1}x_3+\sum_{n\geq 0}\widehat{\omega_n}x_1^{2n+1}x_2^{n+1}x_3^2\\
&&+\sum_{n\geq 0, p\geq 0}\omega_{n,p}x_1^{2n+p+2}x_2^{n+1}x_3^{p+3},\end{eqnarray*}
}
where $b_0\in \mathbb{C}\backslash\{0\}$ and other coefficients are in the $\mathbb{C}$, and
$\frac{\sigma}{\rho}=-\frac{q^2}{(4)_q}$, $\frac{\sigma_n}{\rho_n}=-\frac{q^2(n+2)_q}{(2n+6)_q}$, $\frac{\widetilde{\sigma_p}}{\widetilde{\rho_p}}=-\frac{(p+2)_q}{q^{p-1}(4)_q}$, $\frac{\sigma_{n,p}}{\rho_{n,p}}=-\frac{(n+p+3)_q}{q^{p-1}(2n+6)_q}$,
$\frac{\sigma}{\tau}=-\frac{q}{(3)_q}$, $\frac{\sigma_n}{\tau_n}=-\frac{q(n+2)_q}{(3n+6)_q}$, $\frac{\widetilde{\sigma_p}}{\widetilde{\tau_p}}=-\frac{q(p+2)_q}{(p+4)_q}$, $\frac{\sigma_{n,p}}{\tau_{n,p}}=-\frac{q(n+p+3)_q}{(3n+p+7)_q}$, $\frac{\lambda_{p}}{\omega_p}=-\frac{(p+3)_q}{q(p+1)_q}$, $\frac{\widetilde{\lambda_n}}{\widetilde{\omega_n}}=-\frac{(n+2)_q}{q(3n+2)_q}$,
$\frac{\widehat{\lambda_n}}{\widehat{\omega_n}}=-\frac{(n+3)_q}{q(3n+3)_q}$, $\frac{\lambda_{n,p}}{\omega_{n,p}}=-\frac{(n+p+4)_q}{q(3n+p+4)_q}$,
$\frac{\widetilde{\lambda_n}}{\widetilde{\nu_n}}$ $=$ $-\frac{(n+2)_q}{q(2n+2)_q}$,
$\frac{\widehat{\lambda_n}}{\widehat{\nu_n}}=-\frac{(n+3)_q}{q^2(2n+2)_q}$, $\frac{\lambda_{n,p}}{\nu_{n,p}}=-\frac{(n+p+4)_q}{q^{p+3}(2n+2)_q}$.

Especially, there are the following $U_q(sl(2))$-module algebra structures on $A_q(3)$:
\begin{eqnarray}
\label{r4}k(x_1)=qx_1,~~k(x_2)=q^{-2}x_2,~~k(x_3)=q^{-1}x_3,\end{eqnarray}
\begin{eqnarray}\label{r5} e(x_1)=0,~~e(x_2)=b_0,~~e(x_3)=0,\end{eqnarray}
\begin{eqnarray}\label{r6}f(x_1)=d_1x_3+b_0^{-1}x_1x_2+\sum_{p= 0}^n\widehat{d_p}x_1^{p+1}x_3^{p+2},~~f(x_2)=-qb_0^{-1}x_2^2,\end{eqnarray}
\begin{eqnarray}\label{r7}f(x_3)=-qb_0^{-1}x_2x_3-\sum_{p=0}^n\frac{q(p+1)_q}{(p+3)_q}\widehat{d_p}x_1^{p}x_3^{p+3},
\end{eqnarray}
where $n\in \mathbf{N}$, $d_1$, $\widehat{d_p}\in \mathbb{C}$ for all $p$, $b_0\in \mathbb{C}\backslash \{0\}$.

\end{lemma}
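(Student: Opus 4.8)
The statement to prove is Lemma \ref{t3}, which treats Cases $(\ast_6)$ and $(\ast_7)$ simultaneously; both are forced into the weight data $\alpha=q$, $\beta=q^{-2}$, $\gamma=q^{-1}$, so $k(x_1)=qx_1$, $k(x_2)=q^{-2}x_2$, $k(x_3)=q^{-1}x_3$. The plan is to mimic the strategy already established in Lemmas \ref{t1} and \ref{tt1}: first use the weight constraint $wt(M_{ef})\bowtie\left[\begin{smallmatrix} q^2\alpha & q^2\beta & q^2\gamma\\ q^{-2}\alpha & q^{-2}\beta & q^{-2}\gamma\end{smallmatrix}\right]$ to pin down which monomials $x_1^ax_2^bx_3^c$ are allowed to appear in each of $e(x_i)$ and $f(x_i)$, and then impose the six compatibility relations (\ref{4})--(\ref{5}) to cut the admissible monomials down and to tie their coefficients together.

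First I would compute, for each entry, the exact weight it must carry: $wt(e(x_1))=q^2\alpha=q^3$, $wt(e(x_2))=q^2\beta=1$, $wt(e(x_3))=q^2\gamma=q$, and $wt(f(x_1))=q^{-2}\alpha=q^{-1}$, $wt(f(x_2))=q^{-2}\beta=q^{-4}$, $wt(f(x_3))=q^{-2}\gamma=q^{-3}$. A monomial $x_1^ax_2^bx_3^c$ has weight $q^{a-2b-c}$, so each entry is a (possibly infinite) linear combination of monomials whose exponents satisfy the corresponding Diophantine equation $a-2b-c = 3,0,1,-1,-4,-3$ respectively. Solving these exponent equations and organizing the solutions along the lines visible in the stated answer (a free parameter $n\geq 0$ tracking the $x_2$-degree and a free parameter $p\geq 0$ tracking an $x_3$-tail, with separate families according to whether $x_3$ or $x_2$ is present) produces the general monomial ansatz with undetermined scalar coefficients $\sigma,\sigma_n,\widetilde{\sigma_p},\sigma_{n,p}$, and so on.

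Next I would substitute this ansatz into the six relations (\ref{4})--(\ref{5}). Because these are identities in $A_q(3)$, I would push all monomials into the normal-ordered PBW form $x_1^ax_2^bx_3^c$ using the commutation rule (\ref{e61}), then compare coefficients of each PBW monomial on the two sides. The coefficient comparisons produce a linear system; the nonzero-monomial selection rules in the lemma (e.g. the exclusions $p\neq m+3$, $p\neq m+1$) arise exactly where a $q$-integer prefactor such as $(m+3-p)_q$ would otherwise have to vanish, and the stated ratios like $\frac{\sigma}{\rho}=-\frac{q^2}{(4)_q}$ emerge as the solvability conditions linking coefficients from different entries. I would organize this by matching the $e(x_1)$--$e(x_2)$ pair against (\ref{4}), the $e(x_1)$--$e(x_3)$ pair against (\ref{e2}), and so on, so that each of the six relations contributes one block of ratio identities. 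Finally, for the ``especially'' claim I would verify directly that the explicit truncated family (\ref{r4})--(\ref{r7}) satisfies all the defining relations of $U_q(sl(2))$ via Lemma \ref{w8}, in particular checking $[e,f]=\frac{k-k^{-1}}{q-q^{-1}}$ on each generator, which fixes the remaining normalization (the $-qb_0^{-1}$ factors) and the coupling $\frac{\widehat{d_p}}{\,\cdot\,}=-\frac{q(p+1)_q}{(p+3)_q}$ between the $f(x_1)$ and $f(x_3)$ tails.

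The main obstacle I expect is bookkeeping rather than conceptual: the entries are infinite series in three variables, so the weight equations have two-parameter solution families and the coefficient-matching in (\ref{4})--(\ref{5}) generates a large but highly structured linear system. The delicate points are (i) correctly identifying the degenerate exponents where a $q$-bracket vanishes, which is what forces the index exclusions and is the source of every ``$p\neq m+k$'' side condition, and (ii) ensuring the ratios obtained from the $e$-relations and from the $f$-relations are mutually consistent and do not over-determine the system. Since $q$ is not a root of unity, every $q$-integer $(j)_q$ with $j\neq 0$ is nonzero, which is precisely the hypothesis that keeps the ratio denominators meaningful and guarantees the solution set is exactly the family described; I would lean on this fact repeatedly to justify dividing by the $(j)_q$ factors.
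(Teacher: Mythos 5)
Your proposal is correct and takes essentially the same route as the paper: both fix the common weight data $\alpha=q$, $\beta=q^{-2}$, $\gamma=q^{-1}$ of Cases $(\ast_6)$ and $(\ast_7)$, use the resulting weight constraint on $M_{ef}$ to write down the admissible monomials in each $e(x_i)$, $f(x_i)$, then impose the relations (\ref{4})--(\ref{5}) to eliminate monomials and fix the coefficient ratios, and finally verify the truncated family (\ref{r4})--(\ref{r7}) by checking $ef(u)-fe(u)=\frac{k-k^{-1}}{q-q^{-1}}(u)$ for $u\in\{x_1,x_2,x_3\}$, with Lemma \ref{w8} extending this to all of $A_q(3)$. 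The paper compresses the middle step into ``by some computations,'' so your write-up is simply a more explicit rendering of the same argument.
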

\begin{proof}
In these two cases, we have the same values of $\alpha$, $\beta$ and $\gamma$, i.e., $\alpha=q$, $\beta=q^{-2}$, $\gamma=q^{-1}$. Therefore, $wt(M_{ef})
\bowtie \left[\begin{array}{ccc}
q^3&1&q\\
q^{-1} &q^{-4} &q^{-3}\end{array}\right]$.  Using the equalities (\ref{4})-(\ref{5}) and by some computations, we can obtain that $e(x_1)$, $e(x_2)$, $e(x_3)$, $f(x_1)$, $f(x_2)$, $f(x_3)$ are of the forms in this lemma.

Moreover, using (\ref{r4})-(\ref{r7}), it is easy to check that $ef(u)-fe(u)=\frac{k-k^{-1}}{q-q^{-1}}(u)$, where $u\in \{x_1, x_2, x_3\}$.
Therefore, they determine the module-algebra structures of $U_q(sl(2))$ on $A_q(3)$.
\end{proof}

\begin{lemma}\label{S2}
For Case $(\ast_8)$ and Case $(\ast_9)$, to satisfy (\ref{4})-(\ref{5}), the actions of $k$, $e$, $f$ are of the form
\normalsize{
\begin{eqnarray*}
k(x_1)&=&qx_1,~~k(x_2)=q^{2}x_2,~~k(x_3)=q^{-1}x_3,\\
e(x_1)&=&\sum_{p\geq 0}\alpha_px_1^{p+3}x_3^p+\sum_{m\geq 0}\widetilde{\alpha_m}x_1x_2^{m+1}x_3^{2m}+\sum_{m\geq 0}\widehat{\alpha_m}x_1^2x_2^{m+1}x_3^{2m+1}\\
&&+\sum_{p\geq 0, m\geq 0}\alpha_{m,p}x_1^{p+3}x_2^{m+1}x_3^{2m+p+2},\\
e(x_2)&=&\sum_{m\geq 0}\widetilde{\beta_m}x_2^{m+2}x_3^{2m}+\sum_{m\geq 0}\widehat{\beta_m}x_1x_2^{m+2}x_3^{2m+1}\\
&&+\sum_{p\geq 0, m\geq 0}\beta_{m,p}x_1^{p+2}x_2^{m+2}x_3^{2m+p+2},\\
e(x_3)&=&a_3x_1+\sum_{p\geq 0}\gamma_px_1^{p+2}x_3^{p+1}+\sum_{m\geq 0}\widetilde{\gamma_m}x_2^{m+1}x_3^{2m+1}+\sum_{m\geq 0}\widehat{\gamma_m}x_1x_2^{m+1}x_3^{2m+2}\\
&&+\sum_{p\geq 0, m\geq 0}\gamma_{m,p}x_1^{p+2}x_2^{m+1}x_3^{2m+p+3},\\
f(x_1)&=&\varepsilon x_1x_3^2+\sum_{p\geq 0}\varepsilon_p x_1^{p+2}x_3^{p+3}+\sum_{m\geq 0}\widetilde{\varepsilon_m}x_1x_2^{m+1}x_3^{2m+4}\\
&&+\sum_{m\geq 0, p\geq 0}\varepsilon_{m,p}x_1^{p+2}x_2^{m+1}x_3^{2m+p+5},\\
f(x_2)&=&e_0+\theta x_2x_3^2+\sum_{p\geq 0}\theta_p x_1^{p+1}x_2x_3^{p+3}+\sum_{m\geq 0}\widetilde{\theta_m}x_2^{m+2}x_3^{2m+4}\\
&&+\sum_{m\geq 0, p\geq 0}\theta_{m,p}x_1^{p+1}x_2^{m+2}x_3^{2m+p+5},\\
f(x_3)&=&\eta x_3^3+\sum_{p\geq 0}\eta_p x_1^{p+1}x_3^{p+4}+\sum_{m\geq 0}\widetilde{\eta_m}x_2^{m+1}x_3^{2m+5}\\
&&+\sum_{m\geq 0, p\geq 0}\eta_{m,p}x_1^{p+1}x_2^{m+1}x_3^{2m+p+6},
\end{eqnarray*}
}
where $e_0 \in \mathbb{C}\backslash\{0\}$ and other coefficients are in the $\mathbb{C}$, and
$\frac{\alpha_p}{\gamma_p}=-\frac{q(p+1)_q}{(p+3)_q}$, $\frac{\widetilde{\alpha_m}}{\widetilde{\beta_m}}=\frac{(3m+2)_q}{(2m+2)_q}$,
$\frac{\widehat{\alpha_m}}{\widehat{\beta_m}}=\frac{(3m+3)_q}{q(2m+2)_q}$, $\frac{\alpha_{m,p}}{\beta_{m,p}}=\frac{(3m+p+4)_q}{q^{p+2}(2m+2)_q}$, $\frac{\widetilde{\alpha_m}}{\widetilde{\gamma_m}}=-\frac{q(3m+2)_q}{(m+2)_q}$,
$\frac{\widehat{\alpha_m}}{\widehat{\gamma_m}}=-\frac{q(3m+3)_q}{(m+3)_q}$,
$\frac{\alpha_{m,p}}{\gamma_{m,p}}=-\frac{q(3m+p+4)_q}{(p+m+4)_q}$,
$\frac{\varepsilon}{\theta}=\frac{q(3)_q}{(4)_q}$, $\frac{\varepsilon_p}{\theta_p}=\frac{(p+4)_q}{q^p(4)_q}$,
$\frac{\widetilde{\varepsilon_m}}{\widetilde{\theta_m}}=\frac{q(3m+6)_q}{(2m+6)_q}$,
$\frac{\varepsilon_{m,p}}{\theta_{m,p}}=\frac{(3m+p+7)_q}{q^p(2m+6)_q}$,
$\frac{\varepsilon}{\eta}=-q^{-1}(3)_q$,
 $\frac{\varepsilon_p}{\eta_p}=-\frac{(p+4)_q}{q(p+2)_q}$,
  $\frac{\widetilde{\varepsilon_m}}{\widetilde{\eta_p}}=-\frac{(3m+6)_q}{q(m+2)_q}$,
  $\frac{\varepsilon_{m,p}}{\eta_{m,p}}=-\frac{(3m+p+7)_q}{q(p+m+3)_q}$.

There are the following $U_q(sl(2))$-module algebra structures on $A_q(3)$
\begin{eqnarray}
k(x_1)=qx_1,~~k(x_2)=q^{2}x_2,~~k(x_3)=q^{-1}x_3,\end{eqnarray}
\begin{eqnarray}e(x_1)=-qe_0^{-1}x_1x_2-\sum_{p=0}^n\frac{q(p+1)_q}{(p+3)_q}\alpha_px_1^{p+3}x_3^p,~~e(x_2)=-qe_0^{-1}x_2^2,\end{eqnarray}
\begin{eqnarray} e(x_3)=a_3x_1+e_0^{-1}x_2x_3+\sum_{p=0}^n\alpha_px_1^{p+2}x_3^{p+1},\end{eqnarray}
\begin{eqnarray}f(x_1)=0,~~f(x_2)=e_0,~~f(x_3)=0,
\end{eqnarray}
where $n\in \mathbf{N}$, $a_3$, $\alpha_p\in \mathbb{C}$ for all $p$, $e_0\in \mathbb{C}\backslash \{0\}$.
\end{lemma}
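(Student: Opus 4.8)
The plan is to run the same weight-analysis that produced Lemma \ref{t3}, now adapted to the data of Cases $(\ast_8)$ and $(\ast_9)$. First I would record that in both cases the zeroth homogeneous component of $M_{ef}$ is supported only in the $f(x_2)$-slot, so $f(x_2)=e_0\neq 0$, and the combination of the constraint list following (\ref{2}) together with the paired possibility (\ref{8}) forces $\alpha=q$, $\beta=q^2$, $\gamma=q^{-1}$, which is exactly the action of $k$ claimed in the lemma. Consequently every monomial $x_1^ax_2^bx_3^c$ is a $k$-eigenvector of weight $\alpha^a\beta^b\gamma^c=q^{a+2b-c}$, and the relation $wt(M)\bowtie\big(\text{the weight template}\big)$ pins down the target weights $wt(e(x_1))=q^3$, $wt(e(x_2))=q^4$, $wt(e(x_3))=q$, $wt(f(x_1))=q^{-1}$, $wt(f(x_2))=1$, $wt(f(x_3))=q^{-3}$.

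Next I would write each of $e(x_i)$ and $f(x_i)$ as the most general $\mathbb{C}$-linear combination of all normal-ordered monomials $x_1^ax_2^bx_3^c$ ($a,b,c\geq 0$) whose weight matches the corresponding target; so, for instance, $e(x_1)$ ranges over all monomials with $a+2b-c=3$, which fall into families distinguished by the power of $x_1$ and by whether $x_2$ occurs. Substituting these expansions into the six identities (\ref{4})--(\ref{5}) obtained by applying $e$ and $f$ to the defining relation (\ref{e61}), then reordering every term back into normal form via (\ref{e61}) and equating the coefficient of each normal monomial to zero, produces a homogeneous linear system on the coefficients. Solving it should kill the unwanted families (in particular those with $a=0$) and express the survivors through the displayed $q$-integer ratios $\frac{\alpha_p}{\gamma_p}$, $\frac{\widetilde{\alpha_m}}{\widetilde{\beta_m}}$, $\frac{\varepsilon}{\theta}$, and so on, yielding precisely the general form asserted in the lemma.

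For the explicit family at the end of the statement, I would verify directly that $ef(u)-fe(u)=\frac{k-k^{-1}}{q-q^{-1}}(u)$ holds for each $u\in\{x_1,x_2,x_3\}$; by Lemma \ref{w8} this is enough to guarantee the Serre-type relations and the $[e,f]$ relation on all of $A_q(3)$, while the action of $k$ is an automorphism by Proposition \ref{w3}. Hence the listed structures are genuine $U_q(sl(2))$-module algebra structures.

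The main obstacle is the bookkeeping in the coefficient comparison of the second step. Because the weight constrains only the single combination $a+2b-c$, each entry of $M_{ef}$ a priori involves infinitely many monomials organized into several coupled families, and the relations (\ref{4})--(\ref{5}) mix these families in a way that must be disentangled. Tracking the powers of $q$ generated when normal-ordering through (\ref{e61}), and correctly assembling them into the stated $q$-integer ratios, is where the real work lies; structurally this is identical to Lemma \ref{t3}, only with the weight data reset to $\alpha=q$, $\beta=q^2$, $\gamma=q^{-1}$, which is why the proof can legitimately be presented as similar to that of Lemma \ref{t3}.
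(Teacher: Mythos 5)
Your proposal is correct and follows essentially the same route as the paper: the paper's proof of this lemma simply defers to the argument of Lemma \ref{t3}, which is exactly what you reconstruct — fixing $\alpha=q$, $\beta=q^2$, $\gamma=q^{-1}$ from the nonzero $f(x_2)$-slot and (\ref{8}), expanding each entry of $M_{ef}$ over weight-compatible monomials, extracting the $q$-integer ratios by coefficient comparison in (\ref{4})--(\ref{5}), and then checking $ef(u)-fe(u)=\frac{k-k^{-1}}{q-q^{-1}}(u)$ on generators (with Lemma \ref{w8}) for the explicit family.
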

\begin{proof}
The proof is similar to that in Lemma \ref{t3}.
\end{proof}

\begin{lemma}\label{t4}
For Case $(\ast_{10})$, to satisfy (\ref{4})-(\ref{5}), the actions of $k$, $e$, $f$ on $A_q(3)$ are
\normalsize{
\begin{eqnarray*}
&&k(x_1)=qx_1,~~k(x_2)=qx_2,~~k(x_3)=q^{-2}x_3,\\
&&e(x_1)=\sum_{\substack{n\geq 0,p\geq 0\\2+2p-n\geq 0\\
n\neq p+1}}r_{n,p}x_1^{3+2p-n}x_2^nx_3^p+\sum_{p\geq 0}r_px_1^{2+p}x_2^{p+1}x_3^p,\\
&&e(x_2)=\sum_{\substack{n\geq 0,p\geq 0\\2+2p-n\geq 0\\n\neq p+1}}s_{n,p}x_1^{2+2p-n}x_2^{n+1}x_3^p,\\
&&e(x_3)=c_0+\sum_{\substack{n\geq 0,p\geq 0\\2+2p-n\geq 0\\n\neq p+1}}t_{n,p}x_1^{2+2p-n}x_2^nx_3^{p+1}+\sum_{p\geq 0}t_px_1^{p+1}x_2^{p+1}x_3^{p+1},\\
&&f(x_1)=\sum_{\substack{n\geq 0,p\geq 0\\2p-n\geq 0\\
p\neq n+2}}u_{n,p}x_1^{2p-n+1}x_2^nx_3^{p+1}+\sum_{n\geq 0}u_nx_1^{n+5}x_2^{n}x_3^{n+3},\\
&&f(x_2)=\sum_{\substack{n\geq 0,p\geq 0\\2p-n\geq 0\\p\neq n+2}}v_{n,p}x_1^{2p-n}x_2^{n+1}x_3^{p+1},\\
&&f(x_3)=\sum_{\substack{n\geq 0,p\geq 0\\2p-n\geq 0\\p\neq n+2}}w_{n,p}x_1^{2p-n}x_2^nx_3^{p+2}+\sum_{n\geq 0}w_nx_1^{n+4}x_2^{n}x_3^{n+4},
\end{eqnarray*}}
where $c_0 \in \mathbb{C}\backslash\{0\}$ and other coefficients are in the $\mathbb{C}$, and
$\frac{r_{n,p}}{s_{n,p}}=\frac{(n+p+1)_q}{q^{p+1}(p+1-n)_q}$,
 $\frac{r_{n,p}}{t_{n,p}}=-\frac{q^2(n+p+1)_q}{(2p+4)_q}$, $\frac{r_p}{t_p}=-\frac{q^2(2p+2)_q}{(2p+4)_q}$,
 $\frac{u_{n,p}}{v_{n,p}}=-\frac{(n+p+2)_q}{q^{p+2}(p-2-n)_q}$,  $\frac{u_{n,p}}{w_{n,p}}=-\frac{(n+p+2)_q}{q(2p+2)_q}$,
  $\frac{u_{n}}{w_{n}}=-\frac{(2n+4)_q}{q(2n+6)_q}$.

Specifically, there are the following $U_q(sl(2))$-module algebra structures on $A_q(3)$
\begin{eqnarray}
k(x_1)=qx_1,~~k(x_2)=qx_2,~~k(x_3)=q^{-2}x_3,\\
e(x_1)=0,~~e(x_2)=0,~~e(x_3)=c_0,\\
f(x_1)=c_0^{-1}x_1x_3,~~f(x_2)=c_0^{-1}x_2x_3,~~f(x_3)=-qc_0^{-1}x_3^2,
\end{eqnarray}
where $c_0\in \mathbb{C}\backslash \{0\}$.

\end{lemma}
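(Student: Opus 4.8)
The plan is to follow the same weight-based strategy used in Lemma \ref{t3}, adapted to the weight data of Case $(\ast_{10})$. Since this case forces $\alpha=q$, $\beta=q$, $\gamma=q^{-2}$, the action of $k$ is immediately $k(x_1)=qx_1$, $k(x_2)=qx_2$, $k(x_3)=q^{-2}x_3$, and consequently
\[
wt(M_{ef})\bowtie\left[\begin{array}{ccc} q^3 & q^3 & 1\\ q^{-1} & q^{-1} & q^{-4}\end{array}\right].
\]
Because every monomial $x_1^mx_2^nx_3^s$ is a $k$-eigenvector of weight $q^{m+n-2s}$ and $q$ is not a root of unity, each of $e(x_i)$ and $f(x_i)$ must be a $\mathbb{C}$-linear combination of the monomials whose weight matches the corresponding entry of $wt(M_{ef})$; for instance $e(x_1)$ is supported on monomials with $m+n-2s=3$. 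First I would write out this a priori infinite list of admissible monomials for each of the six images, parametrizing them by the exponents of $x_2$ and $x_3$.

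Next I would substitute these expansions into the six relations (\ref{4})--(\ref{5}) obtained by applying $e$ and $f$ to the defining relation $x_ix_j=qx_jx_i$, and compare coefficients of each monomial after reducing to the normal-ordered basis of $A_q(3)$. The relations (\ref{4}), (\ref{e2}), (\ref{e4}) couple the coefficients of $e(x_1),e(x_2),e(x_3)$, while (\ref{e1}), (\ref{e3}), (\ref{5}) couple those of $f(x_1),f(x_2),f(x_3)$. This comparison has two effects: it forces the vanishing of the boundary monomials that the stated form omits (for example those with $x_1$-exponent zero in $e(x_1)$, so that only the ranges $2+2p-n\geq 0$ survive), and it pins down the pairwise ratios $r_{n,p}/s_{n,p}$, $r_{n,p}/t_{n,p}$, $r_p/t_p$, $u_{n,p}/v_{n,p}$, $u_{n,p}/w_{n,p}$, $u_n/w_n$ exactly as displayed, the $q$-integer factors arising from collecting the powers of $q$ produced when commuting generators past one another.

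Finally, for the displayed explicit family I would check directly that it defines a genuine $U_q(sl(2))$-module algebra structure. By Lemma \ref{w8} it suffices to verify $ef(u)-fe(u)=\frac{k-k^{-1}}{q-q^{-1}}(u)$ on the generators $u\in\{x_1,x_2,x_3\}$, using $e(ab)=ae(b)+e(a)k(b)$ and $f(ab)=k^{-1}(a)f(b)+f(a)b$ to evaluate $e$ and $f$ on the relevant quadratic monomials, while the $k$-relations (\ref{w2}) are immediate from the diagonal form of $k$. I expect the main obstacle to be purely organizational rather than conceptual: managing the two infinite families of monomials and confirming that the coefficient comparison in the normal-ordered basis is simultaneously consistent across all six relations, since the underlying mechanism is identical to that of Lemma \ref{t3}.
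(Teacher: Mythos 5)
Your proposal is correct and takes essentially the same approach as the paper: the paper's own proof merely records the weight data $\alpha=q$, $\beta=q$, $\gamma=q^{-2}$ and the resulting $wt(M_{ef})\bowtie\left[\begin{array}{ccc} q^3 & q^3 & 1\\ q^{-1} & q^{-1} & q^{-4}\end{array}\right]$, then declares the rest ``similar to those in the above lemmas,'' and the method of those lemmas is exactly what you describe --- weight-constrained monomial expansions for $e(x_i)$, $f(x_i)$, coefficient comparison in the relations (\ref{4})--(\ref{5}), and a direct check of $ef(u)-fe(u)=\frac{k-k^{-1}}{q-q^{-1}}(u)$ on generators for the explicit family, with Lemma \ref{w8} justifying the reduction to generators.
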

\begin{proof}
In this case, we have $\alpha=q$, $\beta=q$, $\gamma=q^{-2}$. Therefore, $wt(M_{ef})
\bowtie \left[\begin{array}{ccc}
q^3& q^3&1\\
q^{-1} &q^{-1} & q^{-4}\end{array}\right]$. Then, the proof is similar to those in the above lemmas.

\end{proof}

\begin{lemma}\label{r9}
For Case $(\ast_{11})$, to satisfy (\ref{4})-(\ref{5}), the actions of $k$, $e$ and $f$ on $A_q(3)$ are
\normalsize{
\begin{eqnarray*}
&&k(x_1)=q^2x_1,~~k(x_2)=q^{-1}x_2,~~k(x_3)=q^{-1}x_3,\\
&&e(x_1)=\sum_{\substack{m\geq 0,p\geq 0\\2m-p\geq 0\\
m\neq p+2}}\widetilde{r_{m,p}}x_1^{m+2}x_2^px_3^{2m-p}+\sum_{p\geq 0}\widetilde{r_p}x_1^{p+4}x_2^{p}x_3^{p+4},\\
&&e(x_2)=\sum_{\substack{m\geq 0,p\geq 0\\2m-p\geq 0\\m\neq p+2}}\widetilde{s_{m,p}}x_1^{m+1}x_2^{p+1}x_3^{2m-p},\\
&&e(x_3)=\sum_{\substack{m\geq 0,p\geq 0\\2m-p\geq 0\\m\neq p+2}}\widetilde{t_{n,p}}x_1^{m+1}x_2^px_3^{2m-p+1}+\sum_{p\geq 0}\widetilde{t_p}x_1^{p+3}x_2^{p}x_3^{p+5},\\
&&f(x_1)=d_0+\sum_{\substack{m\geq 0,p\geq 0\\2m+2-p\geq 0\\
p\neq m+1}}\widetilde{u_{m,p}}x_1^{m+1}x_2^px_3^{2m+2-p}+\sum_{m\geq 0}\widetilde{u_m}x_1^{m+1}x_2^{m+1}x_3^{m+1},\\
&&f(x_2)=\sum_{\substack{m\geq 0,p\geq 0\\2m+2-p\geq 0\\p\neq m+1}}\widetilde{v_{m,p}}x_1^{m}x_2^{p+1}x_3^{2m-p+2},\\
&&f(x_3)=\sum_{\substack{n\geq 0,p\geq 0\\2m+2-p\geq 0\\p\neq m+1}}\widetilde{w_{m,p}}x_1^{m}x_2^px_3^{2m-p+3}+\sum_{m\geq 0}\widetilde{w_m}x_1^{m}x_2^{m+1}x_3^{m+2},
\end{eqnarray*}}
where $d_0 \in \mathbb{C}\backslash\{0\}$ and other coefficients are in $\mathbb{C}$, and
$\frac{\widetilde{r_{m,p}}}{\widetilde{s_{m,p}}}=\frac{(2m+2)_q}{q^{m+1}(m-2-p)_q}$,
 $\frac{\widetilde{r_{m,p}}}{\widetilde{t_{m,p}}}=-\frac{q(2m+2)_q}{(m+p+2)_q}$, $\frac{r_p}{t_p}=-\frac{q(2p+6)_q}{(2p+4)_q}$,
 $\frac{\widetilde{u_{m,p}}}{\widetilde{v_{m,p}}}=\frac{(2m+4)_q}{q^{m+3}(m+1-p)_q}$,  $\frac{\widetilde{u_{m,p}}}{\widetilde{w_{m,p}}}=-\frac{(2m+4)_q}{q^2(m+p+1)_q}$,
  $\frac{\widetilde{u_{m}}}{\widetilde{w_{m}}}=-\frac{(2m+4)_q}{q^2(2m+2)_q}$.

There are the following $U_q(sl(2))$-module algebra structures on $A_q(3)$
\begin{eqnarray}
&&k(x_1)=q^2x_1,~~k(x_2)=q^{-1}x_2,~~k(x_3)=q^{-1}x_3,\\
&&e(x_1)=-qd_0^{-1}x_1^2,~~e(x_2)=d_0^{-1}x_1x_2,~~e(x_3)=d_0^{-1}x_1x_3,\\
&&f(x_1)=d_0,~~f(x_2)=0,~~f(x_3)=0,
\end{eqnarray}
where $d_0\in \mathbb{C}\backslash \{0\}$.
\end{lemma}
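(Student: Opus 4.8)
The plan is to follow the weight-analysis template established in Lemmas \ref{10}--\ref{t4}. Case $(\ast_{11})$ fixes $\alpha=q^2$, $\beta=q^{-1}$, $\gamma=q^{-1}$, so an arbitrary monomial $x_1^ax_2^bx_3^c$ has weight $\alpha^a\beta^b\gamma^c=q^{2a-b-c}$, and the weight constraint reads
\begin{eqnarray*}
wt(M_{ef})\bowtie\left[\begin{array}{ccc}
q^4 & q & q\\
1 & q^{-3} & q^{-3}\end{array}\right].
\end{eqnarray*}
First I would translate each entry into a linear Diophantine condition on the exponents: $e(x_1)$ is supported on monomials with $2a-b-c=4$, $e(x_2)$ and $e(x_3)$ on $2a-b-c=1$, $f(x_1)$ on $2a-b-c=0$, and $f(x_2),f(x_3)$ on $2a-b-c=-3$. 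Solving these with $a,b,c\ge 0$ yields, for each image, a doubly-indexed family of admissible monomials, which I would parametrize exactly as in the statement: a ``generic'' family indexed by $(m,p)$ together with a single-parameter ``diagonal'' family. For instance, $e(x_1)$ is spanned by $x_1^{m+2}x_2^px_3^{2m-p}$ (with $2m-p\ge 0$) and $x_1^{p+4}x_2^px_3^{p+4}$, both of weight $q^4$.

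Next I would impose the six compatibility relations (\ref{4})--(\ref{5}) obtained by applying $e$ and $f$ to $x_ix_j=qx_jx_i$. Substituting the weight-homogeneous ansatz and projecting onto individual monomials converts each relation into an infinite linear system tying together the coefficients of $e$ (respectively $f$) on the three generators. Solving these systems recursively should produce precisely the coefficient ratios displayed in the lemma, for example $\frac{\widetilde{r_{m,p}}}{\widetilde{s_{m,p}}}=\frac{(2m+2)_q}{q^{m+1}(m-2-p)_q}$ and $\frac{\widetilde{u_{m,p}}}{\widetilde{v_{m,p}}}=\frac{(2m+4)_q}{q^{m+3}(m+1-p)_q}$, along with the remaining relations among $\widetilde{t},\widetilde{w},\widetilde{r_p},\widetilde{t_p},\widetilde{u_m},\widetilde{w_m}$.

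The main obstacle is the bookkeeping of this infinite combinatorial system and, crucially, detecting where the recursion degenerates. The $q$-integer denominators above vanish exactly at the critical indices: $(m-2-p)_q=0$ when $m=p+2$ on the $e$-side, and $(m+1-p)_q=0$ when $p=m+1$ on the $f$-side. At those indices the generic monomial collapses onto the diagonal one (e.g. $x_1^{m+2}x_2^px_3^{2m-p}$ becomes $x_1^{p+4}x_2^px_3^{p+4}$ when $m=p+2$), the linear relation between the two coefficient families breaks down, and a one-parameter family of coefficients is left free. Isolating these degenerate loci is what justifies the index restrictions $m\neq p+2$ and $p\neq m+1$ together with the separate diagonal sums; the remainder is the same routine coefficient comparison as in Lemma \ref{t3}.

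Finally, to show that the explicit short list at the end really defines $U_q(sl(2))$-module-algebra structures (not merely solutions of (\ref{4})--(\ref{5})), I would verify the last defining relation $ef(u)-fe(u)=\frac{k-k^{-1}}{q-q^{-1}}(u)$ on each generator $u\in\{x_1,x_2,x_3\}$; by Lemma \ref{w8} checking on generators suffices. Here one substitutes the special actions $e(x_1)=-qd_0^{-1}x_1^2$, $e(x_2)=d_0^{-1}x_1x_2$, $e(x_3)=d_0^{-1}x_1x_3$, $f(x_1)=d_0$, $f(x_2)=f(x_3)=0$ and checks the three identities directly, which is a short computation.
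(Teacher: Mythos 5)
Your proposal is correct and follows essentially the same approach as the paper: the paper's proof of this lemma is just a pointer to Lemma \ref{t4} (and thence to Lemmas \ref{10}--\ref{t3}), whose method is exactly your weight analysis with $\alpha=q^2$, $\beta=\gamma=q^{-1}$, coefficient comparison in the projections of (\ref{4})--(\ref{5}), and a direct check of $ef(u)-fe(u)=\frac{k-k^{-1}}{q-q^{-1}}(u)$ on generators for the exhibited special actions. Your identification of the degenerate indices $m=p+2$ and $p=m+1$ (where the $q$-integer denominators vanish and the diagonal families split off) correctly explains the index restrictions in the statement, a point the paper leaves implicit.
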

\begin{proof}
The proof is similar to that in Lemma \ref{t4}.
\end{proof}

Now, we begin to classify all module-algebra structures of $U_q(sl(3))$ on $A_q(3)$.

Denote the nine cases of the actions of $k_1$, $e_1$, $f_1$ (resp. $k_2$, $e_2$, $f_2$) in Lemma \ref{r8}-Lemma \ref{r9} by $(A_1)$, $\cdots$,
$(A9)$ (resp. $(B_1)$, $\cdots$,
$(B9)$). To determine all module-algebra structures of $U_q(sl(3))$ on $A_q(3)$, we have to find all the actions of $k_1$, $e_1$, $f_1$ and $k_2$, $e_2$, $f_2$ which are compatible, i.e., (\ref{e11})-(\ref{e12}) hold and $e_if_i(u)-f_ie_i(u)=\frac{k_i-k_i^{-1}}{q-q^{-1}}(u)$
holds for $u\in\{x_1, x_2, x_3\}$ and $i\in\{1,2\}$.

Because the actions of $k_1$, $e_1$, $f_1$ and $k_2$, $e_2$, $f_2$ in $U_q(sl(3))$ are symmetric, we only need to check 45 cases, i.e., whether
$(Ai)$ is compatible with $(Bj)$ for any $1\leq i\leq j\leq 9$. We use $(Ai)|(B_j)$ to denote that the actions of $k_1$, $e_1$, $f_1$ are those in $(Ai)$ and the actions of $k_2$, $e_2$, $f_2$ are those in $(Bj)$. Moreover, in Case $(Aj)|(Bj)$ ($j\geq 2$), since the actions of $e_i$, $f_i$ are not zero simultaneously for $i\in \{1,2\}$, (3.1) and (3.2) can not be satisfied simultaneously. Therefore, Cases $(Aj)|(Bj)$ ($j\geq 2$)
are excluded.

First, let us consider Case $(A2)|(B5)$. Since $k_2e_1(x_1)=k_2(a_0)=a_0$, $q^{-1}e_1k_2(x_1)$\\
$=$$q^{-1}a_0$ and $a_0\neq 0$,
$k_2e_1(x_1)=q^{-1}e_1k_2(x_1)$ does not hold. Therefore, $(A2)|(B5)$ should be excluded. For the same reason, we exclude
$(A2)|(B6)$, $(A2)|(B8)$, $(A2)|(B9)$, $(A3)|(B4)$, $(A3)|(B7)$, $(A3)|(B8)$, $(A3)|(B9)$, $(A4)|(B6)$, $(A4)|(B7)$,  $(A4)|(B8)$,  $(A4)|(B9)$,  $(A5)|(B7)$, $(A5)|(B8)$, $(A5)|(B9)$, $(A6)|(B7)$, $(A6)|(B8)$, $(A6)|(B9)$, $(A7)|(B8)$, $(A7)|(B9)$, $(A8)|(B9)$.

Second, we consider $(A1)|(B2)$. Since $k_1f_2(x_1)=-qa_0^{-1}x_1^2$ and $qf_2k_1(x_1)=\mp q^2a_0^{-1}x_1^2$, we have $k_1f_2(x_1)\neq qf_2k_1(x_1)$.
Thus, $(A_1)|(B2)$ should be excluded. Similarly, $(A1)|(Bi)$ should be excluded for $i\geq 3$.

Therefore, we only need to consider the following cases: $(A1)|(B1)$, $(A2)|(B3)$, $(A2)|(B4)$, $(A2)|(B7)$, $(A3)|(B5)$, $(A3)|(B6)$,
$(A4)|(B5)$, $(A4)|(B7)$, $(A5)|(B6)$.

\begin{lemma}\label{SS1}
For Case $(A1)|(B1)$, all module-algebra structures of $U_q(sl(3))$ on $A_q(3)$ are as follows
\begin{eqnarray*}
&&k_1(x_1)=\pm x_1, ~~~k_1(x_2)=\pm x_2,~~~k_1(x_3)=\pm x_3,\\
&&k_2(x_1)=\pm x_1, ~~~k_2(x_2)=\pm x_2,~~~k_2(x_3)=\pm x_3,\\
&&e_1(x_1)=e_1(x_2)=e_1(x_3)=f_1(x_1)=f_1(x_2)=f_1(x_3)=0,\\
&&e_2(x_1)=e_2(x_2)=e_2(x_3)=f_2(x_1)=f_2(x_2)=f_2(x_3)=0,
\end{eqnarray*}
which are pairwise non-isomorphic.
\end{lemma}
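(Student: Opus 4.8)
The plan is to separate the statement into two independent tasks: verify that every sign choice in the displayed list really does define a $U_q(sl(3))$-module algebra structure, and then show that distinct sign choices are never isomorphic. The starting observation is that Case $(A1)|(B1)$ means precisely that the action of the subalgebra $A=\langle k_1,e_1,f_1\rangle$ is of the form in Lemma \ref{r8} and that of $B=\langle k_2,e_2,f_2\rangle$ is of the same form. Thus on generators $e_1,f_1,e_2,f_2$ all vanish and $k_1,k_2$ act diagonally by signs $\pm 1$, with the only freedom being the two sign patterns. The content of part (1) is therefore to confirm that \emph{no} sign choice is excluded by the compatibility conditions, so the full list is exactly as stated.

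For this I would invoke Lemma \ref{w8}, which reduces checking the defining relations to checking them on $x_1,x_2,x_3$. The relations \eqref{w5} hold because two diagonal sign-operators commute and are self-inverse, and the operators preserve $x_ax_b=qx_bx_a$ since rescaling by signs does. Every relation containing some $e_i$ or $f_i$ — the $e_j,f_j$ part of \eqref{w2}, the Serre relations \eqref{r12}--\eqref{e12}, and the cross relations \eqref{e11}, \eqref{r10}, \eqref{r11} — holds trivially because $e_i,f_i$ annihilate each generator. The only identity meriting a line is $e_if_i(u)-f_ie_i(u)=\frac{k_i-k_i^{-1}}{q-q^{-1}}(u)$: the left side is zero, and since $k_i(x_j)=\pm x_j$ forces $k_i=k_i^{-1}$ on generators (as $\varepsilon_j^{-1}=\varepsilon_j$ for $\varepsilon_j\in\{\pm1\}$), the right side vanishes on each $x_j$ too. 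Lemma \ref{w8} then propagates all relations to $A_q(3)$, so every sign choice yields a valid structure and the list is complete.

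For part (2) the crucial input is Proposition \ref{w3}: every automorphism $\psi$ of $A_q(3)$ has the diagonal form $\psi(x_j)=\lambda_j x_j$ with $\lambda_j\neq 0$. I would compute $\psi\,\pi(k_i)\,\psi^{-1}(x_j)=\psi\bigl(k_i(\lambda_j^{-1}x_j)\bigr)=\lambda_j^{-1}\varepsilon_j\lambda_j x_j=\varepsilon_j x_j=\pi(k_i)(x_j)$, so the factors $\lambda_j^{-1}$ and $\lambda_j$ cancel and conjugation by any automorphism leaves each $k_i$-action unchanged. Because $e_i,f_i$ are identically zero throughout this family, the complete isomorphism invariant of a structure is the pair of sign patterns $\bigl(k_1(x_j)/x_j,\;k_2(x_j)/x_j\bigr)_{j=1,2,3}\in\{\pm1\}^6$; two structures can be isomorphic only if these coincide, whence distinct sign choices give pairwise non-isomorphic structures. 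The genuinely delicate point here is not a computation but the rigidity supplied by Proposition \ref{w3} — it is exactly what prevents a conjugating automorphism from absorbing a sign discrepancy in the $k_i$-action. Once that rigidity is used, non-isomorphism is immediate and the existence verification is routine.
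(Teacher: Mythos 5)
Your proposal is correct and takes essentially the same route as the paper: the paper likewise verifies the compatibility relations (\ref{e11})--(\ref{e12}) on the generators (trivially, since all $e_i,f_i$ vanish) and deduces pairwise non-isomorphism from the fact that every automorphism of $A_q(3)$, being diagonal by Proposition \ref{w3}, commutes with the sign actions of $k_1$ and $k_2$. Your write-up simply makes explicit the details the paper compresses into ``it can be seen'' --- the reduction to generators via Lemma \ref{w8} and the observation that $k_i=k_i^{-1}$ forces both sides of $e_if_i-f_ie_i=\frac{k_i-k_i^{-1}}{q-q^{-1}}$ to vanish.
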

\begin{proof}
It can be seen that (\ref{e11})-(\ref{e12}) are satisfied for any $u\in\{x_1, x_2, x_3\}$ in this case. Therefore, they are module-algebra structures of $U_q(sl(3))$ on $A_q(3)$. Since all the automorphisms of $A_q(3)$ commute with the actions of $k_1$ and $k_2$, all these module-algebra structures are pairwise non-isomorphic.
\end{proof}

\begin{lemma}\label{r20}
For Case $(A2)|(B3)$, all $U_q(sl(3))$-module algebra structures on $A_q(3)$ are as follows:
\begin{eqnarray*}
&&k_1(x_1)=q^{-2}x_1, ~~k_1(x_2)=q^{-1} x_2,~~k_1(x_3)=q^{-1} x_3,\\
&&k_2(x_1)=qx_1, ~~k_2(x_2)=q x_2,~~k_2(x_3)=q^2 x_3,\\
&&e_1(x_1)=a_0,~~e_1(x_2)=0,~~e_1(x_3)=0,\\
&&e_2(x_1)=-qf_0^{-1}x_1x_3,~~e_2(x_2)=-qf_0^{-1}x_2x_3,~~e_2(x_3)=-qf_0^{-1}x_3^2,\\
&&f_1(x_1)=-qa_0^{-1}x_1^2,~~f_1(x_2)=-qa_0^{-1}x_1x_2,~~f_1(x_3)=-qa_0^{-1}x_1x_3,\\
&&f_2(x_1)=0,~~f_2(x_2)=0,~~f_2(x_3)=f_0,\end{eqnarray*}
where $a_0$, $f_0\in \mathbb{C}\backslash\{0\}$.

All these structures are isomorphic to that with $a_0=f_0=1$.
\end{lemma}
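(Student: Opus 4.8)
The statement to prove is Lemma \ref{r20}, which asserts that for Case $(A2)|(B3)$ the listed assignment really is a $U_q(sl(3))$-module algebra structure on $A_q(3)$, and that all such structures in this case are isomorphic to the one with $a_0=f_0=1$. By Lemma \ref{S1} (Case $(\ast_3)$, applied to the subalgebra $B$ generated by $k_2,e_2,f_2$) and Lemma \ref{10} (Case $(\ast_2)$, applied to $A$ generated by $k_1,e_1,f_1$), the actions of each sub-Hopf-algebra on $A_q(3)$ are already known up to the free parameters: $a_0$ together with $\xi_1,\dots,\xi_4$ for the $A$-part, and $f_0$ together with $\mu_1,\dots,\mu_4$ for the $B$-part. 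So the work is to impose the cross-compatibility relations (\ref{e11})--(\ref{e12}), which by Lemma \ref{w8} need only be checked on the generators $x_1,x_2,x_3$, and show they force all of $\xi_i$ and $\mu_i$ to vanish, leaving exactly the two free nonzero scalars $a_0,f_0$.

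\textbf{Pinning down the parameters.} First I would substitute the general forms from Lemmas \ref{10} and \ref{S1} into the weight-commutation relations (\ref{r10}), namely $k_2e_1(u)=q^{-1}e_1k_2(u)$ and $k_2f_1(u)=qf_1k_2(u)$ for $u\in\{x_1,x_2,x_3\}$, and symmetrically (\ref{e11}) for $k_1$ against $e_2,f_2$. Because every monomial is a $k_i$-eigenvector, these are simply weight-matching conditions: each term of $f_1(x_j)$ or $e_2(x_j)$ must carry the prescribed $k_2$- (resp. $k_1$-) weight, and the extra terms governed by $\xi_1,\xi_2,\xi_3$ in $f_1(x_2)$, by $\xi_4,\xi_1$ in $f_1(x_3)$, and by $\mu_1,\dots,\mu_4$ in $e_2(x_1),e_2(x_2)$ will fail these weight constraints unless their coefficients are zero. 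I expect this step to kill all of $\xi_1,\dots,\xi_4$ and $\mu_1,\dots,\mu_4$ at once, collapsing the $A$-action to (\ref{r1})-type data $e_1(x_1)=a_0$, $f_1(x_j)=-qa_0^{-1}x_1x_j$ (and $f_1(x_1)=-qa_0^{-1}x_1^2$), and the $B$-action to $f_2(x_3)=f_0$, $e_2(x_j)=-qf_0^{-1}x_3x_j$ (with $e_2(x_3)=-qf_0^{-1}x_3^2$), exactly as displayed. After that, I would verify the remaining relations (\ref{r11})--(\ref{e12}), i.e.\ $e_1f_2=f_2e_1$, $e_2f_1=f_1e_2$, the two Serre relations for the $e$'s and the two for the $f$'s, and $e_if_i-f_ie_i=\frac{k_i-k_i^{-1}}{q-q^{-1}}$, on each generator; these are direct computations in $A_q(3)$ using (\ref{e61}), and by Lemma \ref{w8} suffice to guarantee a genuine module-algebra structure.

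\textbf{The isomorphism claim.} Finally, to show every such structure is isomorphic to the one with $a_0=f_0=1$, I would invoke Proposition \ref{w3}: any automorphism $\varphi$ of $A_q(3)$ has the form $x_i\mapsto \lambda_i x_i$ with $\lambda_i\neq 0$. Choosing $\varphi:x_1\mapsto a_0 x_1$, $x_2\mapsto \lambda_2 x_2$, $x_3\mapsto f_0 x_3$ for suitable $\lambda_2$ and conjugating, one checks that $\varphi\,\pi(h)\,\varphi^{-1}$ rescales $e_1(x_1)=a_0$ to $1$ and $f_2(x_3)=f_0$ to $1$ while preserving the structure; since the $k_i$ actions are diagonal they commute with $\varphi$ and are unaffected, so the normalization $a_0=f_0=1$ is achieved. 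The isomorphism is therefore given explicitly, and the only genuinely delicate point is bookkeeping—tracking how the homogeneous $\xi$- and $\mu$-terms scatter across the six weight relations—so the \emph{main obstacle} is organizing the weight-matching in the first step cleanly enough to see that \emph{all} the auxiliary parameters are forced to zero rather than merely constrained.
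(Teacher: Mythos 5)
Your proposal follows essentially the same route as the paper's proof: start from the general forms in Lemmas \ref{10} and \ref{S1}, use the weight-matching relations $k_1e_2(u)=q^{-1}e_2k_1(u)$ and $k_2f_1(u)=qf_1k_2(u)$ on generators to force $\xi_1=\cdots=\xi_4=\mu_1=\cdots=\mu_4=0$, then verify the remaining cross relations (\ref{r11})--(\ref{e12}) directly (justified by Lemma \ref{w8}), and finish with a diagonal automorphism from Proposition \ref{w3} giving the normalization $a_0=f_0=1$. The only cosmetic difference is that the paper fixes the middle scaling as $x_2\mapsto x_2$ (any nonzero $\lambda_2$ indeed works, since $x_2$ enters the action only linearly), so your argument is correct and matches the paper's.
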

\begin{proof}
 By Lemma \ref{10} and Lemma \ref{S1}, to determine the module-algebra structures of $U_q(sl(3))$ on $A_q(3)$, we have to make (\ref{e11})-(\ref{e12}) hold for any $u\in\{x_1, x_2, x_3\}$ using the actions of $k_1$, $e_1$, $f_1$ in Lemma \ref{10} and the actions of $k_2$, $e_2$, $f_2$ in Lemma \ref{S1}.

Since $k_1e_2(x_1)=q^{-1}e_2k_1(x_1)=q^{-3}e_2(x_1)$, we have $e_2(x_1)=-qf_0^{-1}x_1x_3$, i.e., $\mu_1=\mu_2=\mu_3=0$. Using $k_1e_2(x_2)=q^{-1}e_2k_1(x_2)=
q^{-2}e_2(x_2)$, we obtain $e_2(x_2)=-qf_0^{-1}x_2x_3$. Similarly, by $k_2f_1(x_2)=qf_1k_2(x_2)=q^2f_1(x_2)$ and $k_2f_1(x_3)=qf_1k_2(x_3)=q^3f_1(x_3)$, we get
$f_1(x_2)=-qa_0^{-1}x_1x_2$ and $f_1(x_3)=-qa_0^{-1}x_1x_3$. Then, it is easy to check that (\ref{e11})-(\ref{r10}) hold for any $u\in \{x_1, x_2, x_3\}$.

Then, we check that (\ref{r11}) holds. Obviously, $e_1f_2(u)=f_2e_1(u)$ for any $u\in \{x_1, x_2, x_3\}$. Now, we check $e_2f_1(x_1)-f_1e_2(x_1)=0$. In fact,
\begin{eqnarray*}
& &e_2f_1(x_1)-f_1e_2(x_1)\\
&=&e_2(-qa_0^{-1}x_1^2)-f_1(-qf_0^{-1}x_1x_3)\\
&=&-qa_0^{-1}(x_1e_2(x_1)+e_2(x_1)k_2(x_1))+qf_0^{-1}(k_1^{-1}(x_1)f_1(x_3)+f_1(x_1)x_3)\\
&=&(q^2+q^4)a_0^{-1}f_0^{-1}x_1^2x_3-(q^4+q^2)a_0^{-1}f_0^{-1}x_1^2x_3\\
&=&0.
\end{eqnarray*}
Similarly, other equalities in (\ref{r11}) can be checked.

Next, we check that (\ref{r12})-(\ref{e12}) hold for any $u\in \{x_1, x_2, x_3\}$.
Here, we only check $e_2^2e_1(x_1)-(q+q^{-1})e_2e_1e_2(x_1)+e_1e_2^2(x_1)=0$. The other equalities can be checked similarly. In fact,
\begin{eqnarray*}
& & e_2^2e_1(x_1)-(q+q^{-1})e_2e_1e_2(x_1)+e_1e_2^2(x_1)\\
&=&0-(q+q^{-1})e_2e_1(-qf_0^{-1}x_1x_3)+e_1e_2(-qf_0^{-1}x_1x_3)\\
&=&(q+q^{-1})f_0^{-1}e_2(a_0x_3)-qf_0^{-1}e_1(-qf_0^{-1}x_1x_3^2-q^3f_0^{-1}x_1x_3^2)\\
&=&-q(q+q^{-1})a_0f_0^{-2}x_3^2+a_0f_0^{-2}(1+q^2)x_3^2\\
&=&0.
\end{eqnarray*}

Finally, we claim that all the actions with nonzero $a_0$ and $f_0$ are isomorphic to the specific action with $a_0=1$, $f_0=1$.
The desired isomorphism is given by $\psi_{a_0, f_0}: x_1\mapsto a_0 x_1, x_2\mapsto x_2, x_3\mapsto f_0x_3$.
\end{proof}

\begin{lemma}\label{S3}
For Case $(A2)|(B4)$, all $U_q(sl(3))$-module algebra structures on $A_q(3)$ are as follows
\begin{eqnarray*}
&&k_1(x_1)=q^{-2} x_1, ~~k_1(x_2)=q^{-1} x_2,~~k_1(x_3)=q^{-1} x_3,\\
&&k_2(x_1)=qx_1, ~~k_2(x_2)=q^{-1} x_2,~~k_2(x_3)=x_3,\\
&&e_1(x_1)=a_0,~~e_1(x_2)=0,~~e_1(x_3)=0,\\
&&e_2(x_1)=0,~~e_2(x_2)=a_2x_1,~~e_2(x_3)=0,\\
&&f_1(x_1)=-qa_0^{-1}x_1^2,~~f_1(x_2)=-qa_0^{-1}x_1x_2,~~f_1(x_3)=-qa_0^{-1}x_1x_3,\\
&&f_2(x_1)=a_2^{-1}x_2,~~f_2(x_2)=0,~~f_2(x_3)=0,
\end{eqnarray*}
where $a_0$, $a_2\in \mathbb{C}\backslash\{0\}$.

All these module-algebra structures are isomorphic to that with $a_0=a_2=1$.
\end{lemma}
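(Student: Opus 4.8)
The plan is to start from the two general $U_q(sl(2))$-descriptions and cut them down using the cross-relations. By hypothesis $(A2)$ fixes $k_1,e_1,f_1$ as in Lemma \ref{10}, so $e_1(x_1)=a_0$, $e_1(x_2)=e_1(x_3)=0$, while $f_1$ still carries the free constants $\xi_1,\xi_2,\xi_3,\xi_4$; and $(B4)$ gives $k_2,e_2,f_2$ the shape of Lemma \ref{t1}, whose coefficients $a_{m,p},b_{m,p},c_{m,p},\dots$ are as yet unconstrained. To obtain a $U_q(sl(3))$-structure one must impose (\ref{e11})--(\ref{e12}) together with $e_2f_2(u)-f_2e_2(u)=\frac{k_2-k_2^{-1}}{q-q^{-1}}(u)$ for $u\in\{x_1,x_2,x_3\}$, and by Lemma \ref{w8} it suffices to test the three generators.

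First I would exploit the weight constraints. The $k_1$-relations in (\ref{e11}) demand that $e_2(x_1)$ have $k_1$-weight $q^{-3}$, that $e_2(x_2),e_2(x_3)$ have weight $q^{-2}$, that $f_2(x_1)$ have weight $q^{-1}$, and that $f_2(x_2),f_2(x_3)$ have weight $1$. A short exponent count shows that every non-linear monomial occurring in the Lemma \ref{t1} expansions has $k_1$-weight $q^{-N}$ with $N\geq 3$, strictly larger than the exponent demanded at any position where a match could occur; hence all the sums collapse and only the degree-one terms survive, leaving $e_2(x_1)=0$, $e_2(x_2)=a_2x_1$, $e_2(x_3)=0$, $f_2(x_1)=c_1x_2$, $f_2(x_2)=f_2(x_3)=0$. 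Dually, the $k_2$-relations in (\ref{r10}) require $f_1(x_2)$ to be $k_2$-invariant and $f_1(x_3)$ to have $k_2$-weight $q$; since $x_2x_3^2$ and $x_2^3$ carry $k_2$-weights $q^{-1}$ and $q^{-3}$, this forces $\xi_1=\xi_2=0$, and the corresponding analysis of $f_1(x_3)$ forces $\xi_4=0$.

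The crux, and the step I expect to be the main obstacle, is that the weight method does not finish the argument here. Under $k_2$ the two monomials $x_1x_2$ and $x_3^3$ appearing in $f_1(x_2)$ have the same weight $1$, so the parameter $\xi_3$ survives every $k$-constraint; this is exactly the phenomenon that separates $(A2)|(B4)$ from the cleaner $(A2)|(B3)$ situation of Lemma \ref{r20}. To eliminate it I would evaluate the Serre relation (\ref{r14}) at $u=x_3$: reducing with the commutation rule (\ref{e61}), the only term depending on $\xi_3$ is a multiple of $x_3^4$ with coefficient $(q^2+1)\xi_3 a_0^{-1}a_2^{-1}$, and since $q$ is not a root of unity and $a_0,a_2\neq 0$ this forces $\xi_3=0$. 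The $B$-commutator evaluated at $x_1$ reads $c_1a_2\,x_1=x_1$, giving $c_1=a_2^{-1}$, so the structure is pinned down to the asserted form.

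It then remains to confirm that the surviving instances of (\ref{r11})--(\ref{e12}) and the two $\frac{k-k^{-1}}{q-q^{-1}}$-commutators hold for this structure; by Lemma \ref{w8} these reduce to direct verifications on $x_1,x_2,x_3$, which go through as in Lemma \ref{r20}. Finally I would settle the isomorphism claim: the diagonal automorphism $\psi:x_1\mapsto a_0x_1,\ x_2\mapsto a_0a_2x_2,\ x_3\mapsto x_3$ (admissible by Proposition \ref{w3}) satisfies $\psi\,\pi_{a_0,a_2}(h)\,\psi^{-1}=\pi_{1,1}(h)$ on each generator $h$, with the relations for $k_1,k_2$ holding automatically because $\psi$ is diagonal. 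This normalizes every admissible structure to the one with $a_0=a_2=1$.
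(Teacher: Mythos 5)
Your proposal is correct and follows essentially the same route as the paper's proof: collapse $e_2,f_2$ and kill $\xi_1,\xi_2,\xi_4$ by the $k_1$- and $k_2$-weight constraints from (\ref{e11})--(\ref{r10}), eliminate the surviving parameter $\xi_3$ via the Serre relation (\ref{r14}) applied to $x_3$ (yielding the same coefficient $(q^2+1)a_0^{-1}a_2^{-1}\xi_3$ of $x_3^4$), fix $c_1=a_2^{-1}$ by the $e_2,f_2$-commutator, and normalize with the same diagonal automorphism $x_1\mapsto a_0x_1$, $x_2\mapsto a_0a_2x_2$, $x_3\mapsto x_3$. The only difference is the order of the steps, which is immaterial since $c_1\neq 0$ already suffices in the Serre computation.
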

\begin{proof}
 By the above actions of $k_1$, $e_1$, $f_1$ and $k_2$, $e_2$, $f_2$, it is easy to check that (\ref{e11})-(\ref{e12}) hold for any $u\in\{x_1, x_2, x_3\}$. Therefore,
by Lemma \ref{10} and Lemma \ref{t1}, they determine the module-algebra structures of $U_q(sl(3))$ on $A_q(3)$.

Next, we prove that there are no other actions except these in this lemma.

Using $k_1e_2(x_1)=q^{-1}e_2k_1(x_1)=q^{-3}e_2(x_1)$, we can obtain  $e_2(x_1)=0$. By Lemma \ref{t1}, we also have $e_2(x_2)=a_2x_1$ and $e_2(x_3)=0$.
Similarly, by $k_1f_2(x_1)=qf_2k_1(x_1)=q^{-1}f_2(x_1)$, we get $f_2(x_1)=c_1x_2$. Therefore, $f_2(x_2)=f_2(x_3)=0$. Then, using $e_2f_2(x_i)-f_2e_2(x_i)=\frac{k_2-k_2^{-1}}{q-q^{-1}}(x_i)$ for any $i\in\{1, 2, 3\}$, we obtain $c_1=a_2^{-1}$. Since $k_2f_1(x_2)=qf_1k_2(x_2)=f_1(x_2)$ and $k_2f_1(x_3)=qf_1k_2(x_3)=qf_1(x_3)$, by Lemma \ref{10}, we have $f_1(x_2)=-qa_0^{-1}x_1x_2+\xi_3x_3^3$, $f_1(x_3)=-qa_0^{-1}x_1x_3$.

Due to the condition of module algebra, it is easy to see that we have to let $f_1^2f_2(x_3)-(q+q^{-1})f_1f_2f_1(x_3)+f_2f_1^2(x_3)=0$ hold. On the other hand, we have
\begin{eqnarray*}
&&f_1^2f_2(x_3)-(q+q^{-1})f_1f_2f_1(x_3)+f_2f_1^2(x_3)\\
&=&-(q+q^{-1})f_1f_2(-qa_0^{-1}x_1x_3)+f_2f_1(-qa_0^{-1}x_1x_3)\\
&=&q(q+q^{-1})a_0^{-1}a_2^{-1}f_1(x_2x_3)-qa_0^{-1}f_2(f_1(x_1)x_3+q^2x_1f_1(x_3))\\
&=&q(q+q^{-1})a_0^{-1}a_2^{-1}(f_1(x_2)x_3+qx_2f_1(x_3))+qa_0^{-2}(q+q^3)f_2(x_1^2x_3)\\
&=&(q^2+1)a_0^{-1}a_2^{-1}(-(q+q^3)a_0^{-1}x_1x_2x_3+\xi_3x_3^4)\\
&&+(q^2+1)(q+q^3)a_2^{-1}a_0^{-2}x_1x_2x_3\\
&=&(q^2+1)a_0^{-1}a_2^{-1}\xi_3x_3^4.
\end{eqnarray*}
Hence, we get $\xi_3=0$. Therefore, $f_1(x_2)=-qa_0^{-1}x_1x_2$.

Therefore, there are no other actions except these in this lemma.

Finally, we claim that all the actions with nonzero $a_0$ and $a_2$ are isomorphic to the specific action with $a_0=1$, $a_2=1$.
The desired isomorphism is given by $\psi_{a_0, a_2}: x_1\mapsto a_0 x_1, x_2\mapsto a_0a_2x_2, x_3\mapsto x_3$.
\end{proof}

\begin{lemma}\label{S4}
For Case $(A2)|(B7)$, all module-algebra structures of $U_q(sl(3))$ on $A_q(3)$ are as follows
\begin{eqnarray*}
&&k_1(x_1)=q^{-2}x_1,~~k_1(x_2)=q^{-1}x_2,~~k_1(x_3)=q^{-1}x_3,\\
&&k_2(x_1)=qx_1,~~k_2(x_2)=q^2x_2,~~k_2(x_3)=q^{-1}x_3,\\
&&e_1(x_1)=a_0,~~e_1(x_2)=0,~~e_1(x_3)=0,\\
&&e_2(x_1)=-qe_0^{-1}x_1x_2,~~e_2(x_2)=-qe_0^{-1}x_2^2,~~e_2(x_3)=a_3x_1+e_0^{-1}x_2x_3,\\
&&f_1(x_1)=-qa_0^{-1}x_1^2,~~f_1(x_2)=-qa_0^{-1}x_1x_2,~~f_1(x_3)=-qa_0^{-1}x_1x_3+\xi_4x_2x_3^2,\\
&&f_2(x_1)=0,~~f_2(x_2)=e_0,~~f_2(x_3)=0,
\end{eqnarray*}
where $a_0$, $e_0$, $a_3$, $\xi_4\in\mathbb{C}\backslash\{0\}$ and $a_3\xi_4=-qe_0^{-1}a_0^{-1}$.

All module-algebra structures above are isomorphic to that with $a_0=e_0=a_3=1$ and $\xi_4=-q$.
\end{lemma}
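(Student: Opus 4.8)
The plan is to combine the general form of the $(A2)$-action classified in Lemma~\ref{10} (with free parameters $a_0,\xi_1,\xi_2,\xi_3,\xi_4$) with the $(B7)$-action exhibited in Lemma~\ref{S2} (parametrized by $e_0$, $a_3$ and the family $\alpha_p$), and to impose the seven families of compatibility conditions (\ref{e11})--(\ref{e12}) together with $e_if_i(u)-f_ie_i(u)=\frac{k_i-k_i^{-1}}{q-q^{-1}}(u)$. By Lemma~\ref{w8} it is enough to test each identity on the three generators $x_1,x_2,x_3$ rather than on all of $A_q(3)$, which makes the verification finite. First I would exploit the $k$-cross relations (\ref{e11}) and (\ref{r10}), since these are purely weight conditions: comparing $k_1e_2(x_3)$ with $q^{-1}e_2k_1(x_3)$ forces $q^{-3p-5}=q^{-2}$ on every monomial $x_1^{p+2}x_3^{p+1}$, which is impossible for $p\geq 0$, so the entire family $\alpha_p$ vanishes; dually, comparing $k_2f_1(x_2)$ with $qf_1k_2(x_2)$ kills $\xi_1,\xi_2,\xi_3$ (each surviving only if $q^3=1$, which is excluded). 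This reduces the two actions to the displayed forms, leaving $a_0,e_0,a_3,\xi_4$ as the only surviving parameters, with $\xi_4$ appearing solely in $f_1(x_3)=-qa_0^{-1}x_1x_3+\xi_4x_2x_3^2$.

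The decisive step is the cross relation $e_2f_1(x_3)=f_1e_2(x_3)$ from (\ref{r11}). Here I would expand both sides using the twisted Leibniz rules $e_2(ab)=ae_2(b)+e_2(a)k_2(b)$ and $f_1(ab)=k_1^{-1}(a)f_1(b)+f_1(a)b$, normal-ordering everything in $A_q(3)$ via $x_ix_j=qx_jx_i$ for $i>j$. A short computation gives $e_2(x_1x_3)=a_3x_1^2$ and $e_2(x_2x_3^2)=(q^2+1)a_3x_1x_2x_3+qe_0^{-1}x_2^2x_3^2$, so that $e_2f_1(x_3)=-qa_0^{-1}a_3x_1^2+(q^2+1)\xi_4a_3\,x_1x_2x_3+qe_0^{-1}\xi_4\,x_2^2x_3^2$, while the other side gives $f_1e_2(x_3)=-qa_0^{-1}a_3x_1^2-(q^3+q)a_0^{-1}e_0^{-1}\,x_1x_2x_3+qe_0^{-1}\xi_4\,x_2^2x_3^2$. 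The $x_1^2$ and $x_2^2x_3^2$ coefficients agree identically, while the $x_1x_2x_3$ coefficient yields $(q^2+1)\xi_4a_3=-(q^3+q)a_0^{-1}e_0^{-1}$; dividing by $q^2+1\neq 0$ produces the required relation $a_3\xi_4=-qe_0^{-1}a_0^{-1}$. Since $a_0,e_0\neq 0$ the right-hand side is nonzero, which automatically forces $a_3\neq 0$ and $\xi_4\neq 0$. This normal-ordering computation is the main obstacle, since it is the only place where a genuine constraint (rather than a vanishing of extra terms) is produced.

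It then remains to check that this single constraint is also sufficient: with $\alpha_p=\xi_1=\xi_2=\xi_3=0$ the two individual $U_q(sl(2))$-structures are valid by Lemmas~\ref{10} and~\ref{S2}, and once $a_3\xi_4=-qe_0^{-1}a_0^{-1}$ is imposed I expect the remaining identities in (\ref{r11}) and all the quantum Serre relations (\ref{r12})--(\ref{e12}) to hold, which I would confirm by the same expand-and-normal-order procedure on $x_1,x_2,x_3$, the computations being entirely parallel to those in Lemmas~\ref{r20} and~\ref{S3}. Finally, for the isomorphism statement I would invoke Proposition~\ref{w3}: the diagonal automorphism $\psi:x_1\mapsto a_0x_1,\ x_2\mapsto e_0x_2,\ x_3\mapsto a_0a_3x_3$ conjugates a given structure to one with $a_0=e_0=a_3=1$, and under $\psi$ the coefficient of $x_2x_3^2$ in $f_1(x_3)$ becomes $\xi_4\,e_0\,a_0a_3=(a_3\xi_4)(a_0e_0)=-q$ precisely because of the constraint, so every structure in the family is isomorphic to the one with $a_0=e_0=a_3=1$ and $\xi_4=-q$.
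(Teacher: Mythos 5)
Your overall strategy coincides with the paper's: reduce both actions by the weight (i.e.\ $k$-cross) relations, extract the single genuine constraint from $e_2f_1(x_3)=f_1e_2(x_3)$, and normalize the parameters by the diagonal automorphism $\psi: x_1\mapsto a_0x_1$, $x_2\mapsto e_0x_2$, $x_3\mapsto a_0a_3x_3$, which is exactly the isomorphism used in the paper. Your normal-ordering computation agrees with the paper's: both sides match in the $x_1^2$ and $x_2^2x_3^2$ terms and differ only in the $x_1x_2x_3$-coefficient, giving $(q^2+1)a_3\xi_4=-(q^3+q)a_0^{-1}e_0^{-1}$, hence $a_3\xi_4=-qa_0^{-1}e_0^{-1}$; your observation that this automatically forces $a_3,\xi_4\neq 0$ is a point the paper leaves implicit.

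There is, however, one genuine gap on the $(B7)$-side. You take as input only the module-algebra structures \emph{exhibited} at the end of Lemma~\ref{S2}, parametrized by $e_0$, $a_3$ and the family $\alpha_p$. But Lemma~\ref{S2} does not classify all $U_q(sl(2))$-module algebra structures for Case $(\ast_8)/(\ast_9)$: it only records the general form forced by (\ref{4})--(\ref{5}) (which carries many further coefficient families $\widetilde{\alpha_m}$, $\widehat{\alpha_m}$, $\alpha_{m,p}$, $\gamma_p$, $\widetilde{\gamma_m}$, $\varepsilon$, $\theta$, $\eta$, \dots) and then exhibits one particular family of genuine structures. Since the lemma asserts that the displayed actions are \emph{all} module-algebra structures in Case $(A2)|(B7)$, you must allow the $(k_2,e_2,f_2)$-component to be an arbitrary module algebra of that weight type, i.e.\ start from the general form; a priori it need not lie on your list. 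The paper closes exactly this: it applies (\ref{e11}) to the general form (the weight argument kills every family except the constants $\widetilde{\alpha_0}$, $\widetilde{\beta_0}$, $a_3$, $\widetilde{\gamma_0}$, $e_0$), invokes the ratios $\widetilde{\alpha_0}=\widetilde{\beta_0}=-q\widetilde{\gamma_0}$ recorded in Lemma~\ref{S2}, and then \emph{derives} $\widetilde{\gamma_0}=e_0^{-1}$ from $e_2f_2(x_1)-f_2e_2(x_1)=\frac{k_2-k_2^{-1}}{q-q^{-1}}(x_1)$. In your write-up this last identity --- that the $x_2x_3$-coefficient of $e_2(x_3)$ is the inverse of $f_2(x_2)$ --- is smuggled in with the choice of starting point rather than proved. (Your use of Lemma~\ref{10} on the $(A2)$-side is unobjectionable, since that lemma does classify all Case-$(\ast_2)$ structures.) The repair is routine --- your own weight argument disposes of the extra families, and one commutator computation pins down $\widetilde{\gamma_0}$ --- but as written the uniqueness half of the lemma is not fully established.
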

\begin{proof}
 By the above actions of $k_1$, $e_1$, $f_1$ and $k_2$, $e_2$, $f_2$, it is easy to check that (\ref{e11})-(\ref{e12}) hold for any $u\in\{x_1, x_2, x_3\}$. Therefore,
by Lemma \ref{10} and Lemma \ref{S2}, they determine the module-algebra structures of $U_q(sl(3))$ on $A_q(3)$.

Next, we prove that there are no other actions except these in this lemma.

By (\ref{e11}), we can immediately obtain that $e_2(x_1)=\widetilde{\alpha_0}x_1x_2$, $e_2(x_2)=\widetilde{\beta_0}x_2^2$, $e_2(x_3)=a_3x_1+\widetilde{\gamma_0}x_2x_3$,
$f_2(x_1)=0$, $f_2(x_2)=e_0$ and $f_2(x_3)=0$. By Lemma \ref{S2}, $\widetilde{\alpha_0}=\widetilde{\beta_0}=-q\widetilde{\gamma_0}$. According to
$e_2f_2(x_1)-f_2e_2(x_1)=\frac{k_2-k_2^{-1}}{q-q^{-1}}(x_1)$, we obtain $\widetilde{\gamma_0}=e_0^{-1}$. Similarly, by (\ref{r10}), we have $f_1(x_1)=-qa_0^{-1}x_1^2$, $f_1(x_2)=-qa_0^{-1}x_1x_2$ and $f_1(x_3)=-qa_0^{-1}x_1x_3+\xi_4x_2x_3^2$.

Next, let us consider the condition $e_2f_1(x_3)-f_1e_2(x_3)=0$. Since\\
$e_2f_1(x_3)-f_1e_2(x_3)$
\begin{eqnarray*}
&=&e_2(-qa_0^{-1}x_1x_3+\xi_4x_2x_3^2)-f_1(a_3x_1+e_0^{-1}x_2x_3)\\
&=&-qa_0^{-1}(x_1e_2(x_3)+e_2(x_1)k_2(x_3))+\xi_4(x_2x_3e_2(x_3)+x_2e_2(x_3)k_2(x_3)\\
&&+e_2(x_2)k_2(x_3)k_2(x_3))+qa_3a_0^{-1}x_1^2-e_0^{-1}(k_1^{-1}(x_2)f_1(x_3)+f_1(x_2)x_3)\\
&=&\xi_4a_3(q^2+1)x_1x_2x_3+a_0^{-1}e_0^{-1}(q^3+q)x_1x_2x_3\\
&=&0,
\end{eqnarray*}
we obtain $\xi_4a_3=-qa_0^{-1}e_0^{-1}$.

Therefore, there are no other actions except these in this lemma.

Finally, we show that all the actions with nonzero $a_0$, $a_3$, $e_0$ and $\xi_4$ are isomorphic to the specific action with $a_0=e_0=a_3=1$
and $\xi_4=-q$.
The desired isomorphism is given by $\psi_{a_0, a_3, e_0}: x_1\mapsto a_0 x_1, x_2\mapsto e_0x_2, x_3\mapsto a_0a_3x_3$.

\end{proof}
\begin{lemma}\label{r17}
For Case $(A3)|(B5)$, all module-algebra structures of $U_q(sl(3))$ on $A_q(3)$ are as follows:
\begin{eqnarray*}
&&k_1(x_1)=qx_1,~~k_1(x_2)=qx_2,~~k_1(x_3)=q^2x_3,\\
&&k_2(x_1)=x_1,~~k_2(x_2)=qx_2,~~k_2(x_3)=q^{-1}x_3,\\
&&e_1(x_1)=-qf_0^{-1}x_1x_3,~~e_1(x_2)=-qf_0^{-1}x_2x_3,~~e_1(x_3)=-qf_0^{-1}x_3^2,\\
&&e_2(x_1)=0,~~e_2(x_2)=0,~~e_2(x_3)=b_3x_2,\\
&&f_1(x_1)=0,~~f_1(x_2)=0,~~f_1(x_3)=f_0,\\
&&f_2(x_1)=0,~~f_2(x_2)=b_3^{-1}x_3,~~f_2(x_3)=0,
\end{eqnarray*}
where $b_3$, $f_0\in\mathbb{C}\backslash\{0\}$.

All module-algebra structures above are isomorphic to that with $b_3=f_0=1$.
\end{lemma}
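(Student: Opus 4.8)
The plan is to follow the same three-step strategy used in Lemmas \ref{r20}--\ref{S4}: first confirm that the displayed actions do define a $U_q(sl(3))$-module algebra structure, then show that the compatibility relations force the general forms of Lemma \ref{S1} (for $k_1,e_1,f_1$, i.e. Case $(A3)$) and Lemma \ref{tt1} (for $k_2,e_2,f_2$, i.e. Case $(B5)$) to collapse to exactly these, and finally normalize the two surviving parameters $b_3,f_0$ by an automorphism of $A_q(3)$. For the forward direction, since $(A3)$ and $(B5)$ already give $U_q(sl(2))$-module algebra structures by Lemmas \ref{S1} and \ref{tt1}, only the cross-relations (\ref{e11})--(\ref{e12}) need to be verified, and by Lemma \ref{w8} it suffices to check them on the generators $x_1,x_2,x_3$. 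These are short direct computations using $\Delta(e_i)=1\otimes e_i+e_i\otimes k_i$, $\Delta(f_i)=k_i^{-1}\otimes f_i+f_i\otimes 1$ together with the commutation rule (\ref{e61}).

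For the reverse direction I would begin from the fully parametrized forms. First I apply $k_1e_2(u)=q^{-1}e_2k_1(u)$ and $k_1f_2(u)=qf_2k_1(u)$ from (\ref{e11}): since $k_1$ acts by $q,q,q^2$ on $x_1,x_2,x_3$, every monomial in the infinite families of $e_2$ and $f_2$ in Lemma \ref{tt1} has a $k_1$-weight strictly larger than the one required (e.g. $x_1^px_2^{m+2}x_3^{m+1}$ has weight $q^{p+3m+4}\neq q$), so all those coefficients vanish. This collapses $(B5)$ to $e_2(x_3)=b_3x_2$ and $f_2(x_2)=d_2x_3$ with the other five images zero, and the $U_q(sl(2))$ bracket $e_2f_2(x_2)-f_2e_2(x_2)=\frac{k_2-k_2^{-1}}{q-q^{-1}}(x_2)=x_2$ then forces $d_2=b_3^{-1}$. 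Symmetrically, the relations $k_2e_1(u)=q^{-1}e_1k_2(u)$ from (\ref{r10}), using that $k_2$ acts by $1,q,q^{-1}$, kill the terms $\mu_1x_1^2x_2$, $-q\mu_2x_1^3$ and $(1+q+q^2)\mu_3x_1x_2^2$ in $e_1(x_1)$, so $\mu_1=\mu_2=\mu_3=0$.

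The one parameter invisible to the weight relations is $\mu_4$, the coefficient of $x_1^3$ in $e_1(x_2)$, and pinning it down is the crux. I expect this to be the main obstacle: the linear $k$-relations and the mixed commutators $e_1f_2=f_2e_1$, $e_2f_1=f_1e_2$ all leave $\mu_4$ free, and a direct check shows that the quantum Serre relation (\ref{r12}) applied to $x_2$ or to $x_3$ has its $\mu_4$-dependence cancel identically. The resolution is to apply (\ref{r12}) to $x_1$: after reducing $e_1^2e_2(x_1)$, $e_1e_2e_1(x_1)$ and $e_2e_1^2(x_1)$ to normal form, the $x_1x_2x_3$-coefficients cancel automatically, but there survives a single term $(q^2+1)\mu_4 f_0^{-1}b_3\,x_1^4$ coming from $-(q+q^{-1})e_1e_2e_1(x_1)$ (indeed $e_1^2e_2(x_1)=0$ since $e_2(x_1)=0$, and $e_2e_1^2(x_1)$ is a multiple of $x_1x_2x_3$). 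Since $q$ is not a root of unity and $b_3,f_0\neq 0$, this forces $\mu_4=0$, completing the determination of the action.

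Finally, for the isomorphism claim I exhibit the automorphism $\psi\colon x_1\mapsto x_1$, $x_2\mapsto f_0b_3^{-1}x_2$, $x_3\mapsto f_0x_3$, which is an automorphism of $A_q(3)$ by Proposition \ref{w3}. Conjugation by $\psi$ sends the action with parameters $b_3,f_0$ to the one with $b_3=f_0=1$: for instance $(\psi e_2\psi^{-1})(x_3)=f_0^{-1}\psi(b_3x_2)=f_0^{-1}b_3(f_0b_3^{-1})x_2=x_2$ and $(\psi f_1\psi^{-1})(x_3)=f_0^{-1}f_0=1$, while the images of $e_1$ are rescaled so that each coefficient $-qf_0^{-1}$ becomes $-q$. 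Since $\psi$ commutes with $k_1$ and $k_2$, this conjugation preserves the module-algebra property, and a direct computation on $x_1,x_2,x_3$ confirms the normalization, which proves the lemma.
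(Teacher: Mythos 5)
Your proposal is correct and takes essentially the same approach as the paper: the paper's own proof simply defers to the method of Lemma \ref{S3}, which is exactly your three-step scheme (the $k$-weight relations plus the $U_q(sl(2))$ bracket to collapse the parametrized forms of Lemmas \ref{S1} and \ref{tt1}, a quantum Serre relation to kill the one residual coefficient, and an explicit diagonal automorphism for the normalization). Your key step — applying (\ref{r12}) to $x_1$, where the $x_1x_2x_3$-terms cancel and the surviving term $(q^2+1)f_0^{-1}b_3\mu_4 x_1^4$ forces $\mu_4=0$ — checks out and is the precise analogue of the paper's use of the $f$-Serre relation on $x_3$ to eliminate $\xi_3$ in Lemma \ref{S3}.
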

\begin{proof}
The proof is similar to that in Lemma \ref{S3}.
\end{proof}

\begin{lemma}\label{r18}
For Case $(A3)|(B6)$, all module-algebra structures of $U_q(sl(3))$ on $A_q(3)$ are as follows:
\begin{eqnarray*}
&&k_1(x_1)=qx_1,~~k_1(x_2)=qx_2,~~k_1(x_3)=q^2x_3,\\
&&k_2(x_1)=qx_1,~~k_2(x_2)=q^{-2}x_2,~~k_2(x_3)=q^{-1}x_3,\\
&&e_1(x_1)=-qf_0^{-1}x_1x_3+\mu_1x_1^2x_2,~~e_1(x_2)=-qf_0^{-1}x_2x_3,~~e_1(x_3)=-qf_0^{-1}x_3^2,\\
&&e_2(x_1)=0,~~e_2(x_2)=b_0,~~e_2(x_3)=0,\\
&&f_1(x_1)=0,~~f_1(x_2)=0,~~f_1(x_3)=f_0,\\
&&f_2(x_1)=d_1x_3+b_0^{-1}x_1x_2,~~f_2(x_2)=-qb_0^{-1}x_2^2,~~f_2(x_3)=-qb_0^{-1}x_2x_3,
\end{eqnarray*}
where $d_1$, $b_0$, $\mu_1$, $f_0\in\mathbb{C}\backslash\{0\}$ and $\mu_1d_1=-qb_0^{-1}f_0^{-1}$.

All module-algebra structures above are isomorphic to that with $d_1=b_0=f_0=1$ and $\mu_1=-q$.
\end{lemma}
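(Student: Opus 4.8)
The plan is to follow the three-stage template of Lemma \ref{S4}: verify that the displayed actions form a $U_q(sl(3))$-module algebra, show that no other compatible pair of $(A3)$ and $(B6)$ actions exists, and normalize by a diagonal automorphism. Since the $k_1,e_1,f_1$ block is an instance of Lemma \ref{S1} and the $k_2,e_2,f_2$ block is the special structure (\ref{r4})--(\ref{r7}) of Lemma \ref{t3} with all $\widehat{d_p}=0$, the relations internal to each copy of $U_q(sl(2))$ hold automatically; by Lemma \ref{w8} it then suffices to check the cross conditions (\ref{e11})--(\ref{e12}) on the three generators $x_1,x_2,x_3$, a finite computation.

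The heart of the argument is the converse direction. Starting from the general form of $(A3)$ in Lemma \ref{S1} (parameters $f_0,\mu_1,\mu_2,\mu_3,\mu_4$) and the general form of $(B6)$ in Lemma \ref{t3}, I would read (\ref{e11}) and (\ref{r10}) as $k$-weight constraints. Because $k_1$ has eigenvalues $q,q,q^2$ and $k_2$ has eigenvalues $q,q^{-2},q^{-1}$, the requirement that each $e_2(u)$ and $f_2(u)$ be a $k_1$-eigenvector of the prescribed eigenvalue forces every term of positive total degree beyond the displayed ones to vanish; in particular all the infinite families $\sigma_{n,p},\rho_{n,p},\lambda_{n,p},\dots$ of Lemma \ref{t3} collapse, leaving $e_2(x_2)=b_0$ and $f_2(x_1)=d_1x_3+b_0^{-1}x_1x_2$, $f_2(x_2)=-qb_0^{-1}x_2^2$, $f_2(x_3)=-qb_0^{-1}x_2x_3$. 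Symmetrically, imposing that $e_1(x_1)$ and $e_1(x_2)$ be $k_2$-eigenvectors of eigenvalues $1$ and $q^{-3}$ respectively kills the monomials $x_1^3,\,x_1x_2^2,\,x_2^3$, that is, forces $\mu_2=\mu_3=\mu_4=0$, so that $e_1(x_1)=-qf_0^{-1}x_1x_3+\mu_1x_1^2x_2$.

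I expect the decisive and most delicate step to be the extraction of the relation $\mu_1d_1=-qb_0^{-1}f_0^{-1}$, together with the conclusion that $\mu_1,d_1$ cannot vanish. This I would obtain from the mixed commutator (\ref{r11}) applied to $x_1$: computing $e_1f_2(x_1)-f_2e_1(x_1)$ with $\Delta(e_1)=1\otimes e_1+e_1\otimes k_1$ and $\Delta(f_2)=k_2^{-1}\otimes f_2+f_2\otimes 1$, the coefficients of $x_3^2$ and of $x_1^2x_2^2$ cancel identically, while the coefficient of $x_1x_2x_3$ equals $-b_0^{-1}q(1+q^2)f_0^{-1}-\mu_1d_1(1+q^2)$; setting this to zero yields exactly $\mu_1d_1=-qb_0^{-1}f_0^{-1}$. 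Since the right-hand side is nonzero, both $\mu_1$ and $d_1$ are automatically nonzero, and a final check that the remaining instances of (\ref{r11}) and the Serre relations (\ref{r12})--(\ref{e12}) are then satisfied closes the classification. The obstacle here is bookkeeping: the weight reduction must be carried out uniformly across the infinite families of Lemma \ref{t3}, and one must track carefully which of the many cancellations are forced and which genuinely constrain the product $\mu_1d_1$.

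Finally, by Proposition \ref{w3} any isomorphism is diagonal, $\psi:x_i\mapsto\alpha_ix_i$, and such $\psi$ commutes with $k_1,k_2$ automatically. A direct substitution shows that $\psi:x_1\mapsto d_1f_0\,x_1,\ x_2\mapsto b_0\,x_2,\ x_3\mapsto f_0\,x_3$ sends the general structure to the one with $d_1=b_0=f_0=1$; here the coefficient of $x_1^2x_2$ in $e_1(x_1)$ transforms into $\mu_1d_1f_0b_0=-q$ precisely because of the constraint, giving $\mu_1=-q$ in the normalized form. This proves that all the structures in the statement are mutually isomorphic and completes the proof.
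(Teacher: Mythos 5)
Your proposal is correct and follows essentially the same route as the paper: the paper's proof of this lemma is just "similar to Lemma \ref{S4}", and your three-stage plan (verify the displayed actions, collapse the general forms of Lemmas \ref{S1} and \ref{t3} via the $k$-weight constraints (\ref{e11}), (\ref{r10}), then extract $\mu_1d_1=-qb_0^{-1}f_0^{-1}$ from the mixed relation (\ref{r11}) applied to $x_1$, and normalize by a diagonal automorphism) is exactly the S4 template instantiated for $(A3)|(B6)$. Your key computation — the coefficient $-b_0^{-1}q(1+q^2)f_0^{-1}-\mu_1d_1(1+q^2)$ of $x_1x_2x_3$ in $e_1f_2(x_1)-f_2e_1(x_1)$ — and the normalizing map $x_1\mapsto d_1f_0x_1$, $x_2\mapsto b_0x_2$, $x_3\mapsto f_0x_3$ both check out.
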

\begin{proof}
The proof is similar to that in Lemma \ref{S4}.
\end{proof}

\begin{lemma}
For Case $(A4)|(B5)$, all module-algebra structures of $U_q(sl(3))$ on $A_q(3)$ are as follows:
\begin{eqnarray*}
&&k_1(x_1)=qx_1,~~k_1(x_2)=q^{-1}x_2,~~k_1(x_3)=x_3,\\
&&k_2(x_1)=x_1,~~k_2(x_2)=qx_2,~~k_2(x_3)=q^{-1}x_3,\\
&&e_1(x_1)=0,~~e_1(x_2)=a_2x_1,~~e_1(x_3)=0,\\
&&e_2(x_1)=0,~~e_2(x_2)=0,~~e_2(x_3)=b_3x_2,\\
&&f_1(x_1)=a_2^{-1}x_2,~~f_1(x_2)=0,~~f_1(x_3)=0,\\
&&f_2(x_1)=0,~~f_2(x_2)=b_3^{-1}x_3,~~f_2(x_3)=0,
\end{eqnarray*}
where $a_2$, $b_3\in \mathbb{C}\backslash\{0\}$.

All the above module-algebra structures are isomorphic to that with $a_2=b_3=1$.

\end{lemma}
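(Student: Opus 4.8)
The plan is to follow the same two-step template that governs every compatibility lemma in this section, namely (i) verify that the displayed actions really do satisfy the seven families of relations (\ref{e11})--(\ref{e12}) together with the two commutators $e_if_i(u)-f_ie_i(u)=\frac{k_i-k_i^{-1}}{q-q^{-1}}(u)$ for $i\in\{1,2\}$ and $u\in\{x_1,x_2,x_3\}$, thereby establishing that they \emph{are} $U_q(sl(3))$-module algebra structures; and (ii) show that no other actions of type $(A4)|(B5)$ can occur. Step (i) is purely mechanical: here $(A4)$ is the ``$\ast_4$'' family of Lemma~\ref{t1} specialized to the linear action $e_1(x_2)=a_2x_1$, $f_1(x_1)=a_2^{-1}x_2$, all other $e_1,f_1$ values zero, while $(B5)$ is the ``$\ast_5$'' family of Lemma~\ref{tt1} specialized to $e_2(x_3)=b_3x_2$, $f_2(x_2)=b_3^{-1}x_3$, all other $e_2,f_2$ values zero. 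Because $e_1,f_1$ act nontrivially only on the $x_1,x_2$ pair and $e_2,f_2$ act nontrivially only on the $x_2,x_3$ pair, most of the Serre-type brackets collapse immediately. By Lemma~\ref{w8} it suffices to check the relations on the three generators $x_1,x_2,x_3$ rather than on all of $A_q(3)$, which keeps the verification finite.

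For step (ii), the strategy mirrors the constraint analysis in Lemmas~\ref{S3} and~\ref{S4}. First I would use the $k$-compatibility relations (\ref{e11})--(\ref{r10}) to prune the general forms given in Lemma~\ref{t1} (for $e_1,f_1$) and Lemma~\ref{tt1} (for $e_2,f_2$). Concretely, the action of $k_2$ on $A_q(3)$ is the automorphism $x_1\mapsto x_1,\ x_2\mapsto qx_2,\ x_3\mapsto q^{-1}x_3$, so imposing $k_2e_1(u)=q^{-1}e_1k_2(u)$ forces $e_1(u)$ to lie in a single $k_2$-weight space; combined with the $k_1$-weight constraints already built into Lemma~\ref{t1}, this should kill every higher-degree term in the expansions, leaving only $e_1(x_2)=a_2x_1$ and $f_1(x_1)=a_2^{-1}x_2$. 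The symmetric argument with $k_1$ acting on the $(B5)$ data cuts the Lemma~\ref{tt1} expansions down to $e_2(x_3)=b_3x_2$ and $f_2(x_2)=b_3^{-1}x_3$. Any surviving ambiguity is then removed using the remaining relations (\ref{r11})--(\ref{e12}), exactly as the extra coefficient $\xi_3$ was forced to vanish in Lemma~\ref{S3} and $\xi_4a_3$ was pinned down in Lemma~\ref{S4}.

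Finally, for the isomorphism claim, I would exhibit an explicit automorphism of $A_q(3)$ rescaling the generators. By Proposition~\ref{w3} every automorphism has the form $x_i\mapsto\alpha_i x_i$, and such maps automatically commute with the diagonal actions of $k_1,k_2$. Choosing $\psi:\ x_1\mapsto x_1,\ x_2\mapsto a_2 x_2,\ x_3\mapsto a_2 b_3 x_3$ (or a similar diagonal scaling determined by matching $e_1,e_2$) should conjugate the general $(a_2,b_3)$ structure to the normalized one with $a_2=b_3=1$; I would verify $\psi\pi_1(h)\psi^{-1}=\pi_2(h)$ on the generators $e_i,f_i$, which by the module-algebra axioms extends to all of $U_q(sl(3))$.

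The main obstacle is step (ii): the expansions in Lemmas~\ref{t1} and~\ref{tt1} are infinite families indexed by $m,p$, so the real work is confirming that the weight constraints coming from \emph{both} $k_1$ and $k_2$ simultaneously force \emph{all} of the infinitely many higher-degree coefficients to vanish, leaving only the two linear terms. I expect the cross-weight conditions to be restrictive enough that each offending monomial fails at least one of the two eigenvalue equations, but writing this down carefully—rather than merely checking the leading terms—is where the argument needs the most attention; the positive verification in step (i) and the rescaling in step (iii) are routine by comparison.
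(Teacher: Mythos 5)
Your steps (i) and (iii) coincide with the paper's: the verification on generators (justified by Lemma \ref{w8}) and the rescaling automorphism $\psi_{a_2,b_3}\colon x_1\mapsto x_1,\ x_2\mapsto a_2x_2,\ x_3\mapsto a_2b_3x_3$ are exactly what the paper uses. The genuine gap is in step (ii). Your plan rests on the expectation that the two eigenvalue constraints (the $k_1$- and $k_2$-weight conditions) kill every higher-degree term in the expansions of Lemmas \ref{t1} and \ref{tt1}; this is false. A monomial $x_1^ax_2^bx_3^c$ has $k_1$-weight $q^{a-b}$ and $k_2$-weight $q^{b-c}$, so for example the conditions $wt_{k_2}(e_2(x_1))=q^2$ and $wt_{k_1}(e_2(x_1))=1$ coming from the internal structure and from (\ref{e11}) only force $a=b$ and $b-c=2$: every monomial $x_1^{n+2}x_2^{n+2}x_3^{n}$ with $n\geq 0$ passes both tests. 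This is precisely why the paper, after imposing (\ref{e11}), is still left with infinite families, e.g. $e_2(x_1)=\sum_{n\geq 0}\widetilde{a_n}x_1^{n+2}x_2^{n+2}x_3^{n}$ and $f_2(x_2)=d_2x_3+\sum_{m\geq 0}\widetilde{e_{m,m+1}}x_1^{m+1}x_2^{m+1}x_3^{m+2}$, and similarly for the first copy (monomials $x_1^{m+2}x_2^{m+1}x_3^{m+1}$ survive in $e_1(x_2)$).

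What you are missing is the mechanism that eliminates these surviving families. The paper does not get this from the cross relations (\ref{r11})--(\ref{e12}) that you fall back on (those sufficed in Lemmas \ref{S3} and \ref{S4}, where only finitely many extra coefficients such as $\xi_3$, $\xi_4$ occurred); it uses the \emph{internal} quantum $sl_2$ relation $e_2f_2(x_2)-f_2e_2(x_2)=\frac{k_2-k_2^{-1}}{q-q^{-1}}(x_2)=x_2$, which is genuinely extra information here because Lemmas \ref{t1} and \ref{tt1} only encode the consequences of (\ref{4})--(\ref{5}). Concretely, if some of the surviving coefficients were nonzero, one selects those of highest degree and checks that the commutator contains a \emph{unique} monomial of top degree whose coefficient is a product of the chosen coefficients with factors of the form $(1-q^{2k})$, hence nonzero since $q$ is not a root of unity; its degree exceeds $1$, contradicting that the commutator equals $x_2$. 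The same argument with $e_1f_1-f_1e_1=\frac{k_1-k_1^{-1}}{q-q^{-1}}$ disposes of the first copy. Without this leading-term argument (or an equivalent substitute), your step (ii) stalls exactly at the point you flagged as needing attention.
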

\begin{proof}
By the actions of $k_1$, $e_1$, $f_1$ and $k_2$, $e_2$, $f_2$, it is easy to check that (\ref{e11})-(\ref{e12}) hold for any $u\in\{x_1, x_2, x_3\}$. Therefore,
by Lemma \ref{t1} and Lemma \ref{tt1}, they determine the module-algebra structures of $U_q(sl(3))$ on $A_q(3)$.

Next, we prove that there are no other actions except these in this lemma.

By Lemma \ref{tt1} and using (\ref{e11}) holding for any $u\in \{x_1, x_2, x_3\}$, we can obtain that
$e_2(x_1)=\sum_{n\geq 0}\widetilde{a_n}x_1^{n+2}x_2^{n+2}x_3^{n}$, $e_2(x_2)=0$, $e_2(x_3)=b_3x_2+\sum_{n\geq 0}\widetilde{c_n}x_1^{n+1}x_2^{n+2}$\\
$\cdot x_3^{n+1}$,
$f_2(x_1)=\sum_{m\geq 0}\widetilde{d_{m,m+1}}x_1^{m+2}x_2^mx_3^{m+2}$, $f_2(x_2)=d_2x_3+\sum_{m\geq 0}\widetilde{e_{m,m+1}}x_1^{m+1}$\\
$\cdot x_2^{m+1}x_3^{m+2}$,
$f_2(x_3)=\sum_{m\geq 0}\widetilde{g_{m,m+1}}x_1^{m+1}x_2^{m}x_3^{m+3}$. By Lemma \ref{tt1}, we know that $\frac{\widetilde{a_n}}{\widetilde{c_n}}=
-\frac{q(2n+2)_q}{(2n+4)_q}$, $\frac{\widetilde{d_{m,m+1}}}{\widetilde{e_{m,m+1}}}=\frac{(2m+2)_q}{q^{m}(2)_q}$,  $\frac{\widetilde{d_{m,m+1}}}{\widetilde{g_{m,m+1}}}=\frac{q^{2m+2}-1}{1-q^{2m+2}}=-1$. Set $v_n=\frac{\widetilde{a_n}}{\widetilde{c_n}}$
and $\kappa_m=\frac{\widetilde{d_{m,m+1}}}{\widetilde{e_{m,m+1}}}$.

Next, we consider $e_2f_2(x_2)-f_2e_2(x_2)=\frac{k_2-k_2^{-1}}{q-q^{-1}}(x_2)=x_2$. By some computations, we obtain\\
$e_2f_2(x_2)-f_2e_2(x_2)$
\begin{eqnarray*}
&=&b_3d_2x_2+\sum_{n\geq 0}(q^{2n+4}-1)d_2v_nx_1^{n+1}x_2^{n+2}x_3^{n+1}-\sum_{m\geq0}(q^{-1}-q^{2m+3})\\
&&\cdot b_3\kappa_mx_1^{m+1}x_2^{m+2}x_3^{m+1}+\sum_{m\geq 0,n\geq 0}q^{3mn+3m+3n+2}(1-q^{2})\\
&&\cdot(1-q^{2m+2n+6})v_n\kappa_mx_1^{m+n+2}x_2^{m+n+3}x_3^{m+n+2}.
\end{eqnarray*}
If there exist $v_n$ and $\kappa_m$ not equal to zero, we can choose the terms with coefficients $v_{n_e}$ and $\kappa_{m_f}$ in $e_2(x_1)$, $e_2(x_2)$, $e_2(x_3)$, $f_2(x_1)$, $f_2(x_2)$, $f_2(x_3)$ such that the degrees of them are the highest. Then, the unique monomial of the highest degree
in $(e_2f_2-f_2e_2)$$(x_2)$ is \begin{eqnarray*}q^{3m_fn_e+3m_f+3n_e+2}(1-q^{2})(1-q^{2m_f+2n_e+6})v_{n_e}\kappa_{m_f}\\
x_1^{m_f+n_e+2}x_2^{m_f+n_e+3}x_3^{m_f+n_e+2}.\end{eqnarray*} Since the degree of this term is larger than 1, this case is impossible. Similarly, all cases except that all $v_n$, $\kappa_m$ are equal to zero should be excluded.
Therefore, we obtain that $e_2(x_1)=0$, $e_2(x_2)=0$, $e_2(x_3)=b_3x_2$, $f_2(x_1)=0$, $f_2(x_2)=b_3^{-1}x_3$ and $f_2(x_3)=0$.

Similarly, using (\ref{r10}), Lemma \ref{t1} and $e_1f_1(u)-f_1e_1(u)=\frac{k_1-k_1^{-1}}{q-q^{-1}}(u)$ for any $u\in\{x_1, x_2, x_3\}$, we can get
$e_1(x_1)=0$, $e_1(x_2)=a_2x_1$, $e_1(x_3)=0$, $f_1(x_1)=a_2^{-1}x_2$, $f_1(x_2)=0$, $f_1(x_3)=0$.

Therefore, there are no other actions except ones in this lemma.

Finally, we claim that all the actions with nonzero $a_2$, $b_3$ are isomorphic to the specific action with $a_2=b_3=1$.
The desired isomorphism is given by $\psi_{a_2, b_3}: x_1\mapsto x_1, x_2\mapsto a_2x_2, x_3\mapsto a_2b_3x_3$.

\end{proof}
\begin{lemma}\label{S5}
For Case $(A5)|(B6)$, all module-algebra structures of $U_q(sl(3))$ on $A_q(3)$ are
\begin{eqnarray*}
&&k_1(x_1)=x_1,~~k_1(x_2)=qx_2,~~k_1(x_3)=q^{-1}x_3,\\
&&k_2(x_1)=qx_1,~~k_2(x_2)=q^{-2}x_2,~~k_2(x_3)=q^{-1}x_3,\\
&&e_1(x_1)=0,~~e_1(x_2)=0,~~e_1(x_3)=b_3x_2,\\
&&e_2(x_1)=0,~~e_2(x_2)=b_0,~~e_2(x_3)=0,\\
&&f_1(x_1)=0,~~f_1(x_2)=b_3^{-1}x_3,~~f_1(x_3)=0,\\
&&f_2(x_1)=b_0^{-1}x_1x_2,~~f_2(x_2)=-qb_0^{-1}x_2^2,~~f_2(x_3)=-qb_0^{-1}x_2x_3,
\end{eqnarray*}
where $b_3$, $b_0\in \mathbb{C}\backslash\{0\}$.

All module-algebra structures are isomorphic to that with $b_0=b_3=1$.
\end{lemma}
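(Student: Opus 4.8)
The plan is to adopt the three-stage strategy used for the earlier cases (cf.\ Lemma~\ref{S4} and the lemma treating Case $(A4)|(B5)$): first exhibit the six displayed actions as a genuine $U_q(sl(3))$-symmetry, then rule out any further structures, and finally normalise the two free scalars $b_0,b_3$. For existence I would substitute the explicit values of $k_i,e_i,f_i$ into the compatibility relations (\ref{e11})--(\ref{e12}) and into $e_if_i(u)-f_ie_i(u)=\frac{k_i-k_i^{-1}}{q-q^{-1}}(u)$, verifying them only on the generators $u\in\{x_1,x_2,x_3\}$; by Lemma~\ref{w8} this suffices for all of $A_q(3)$. Since the triple $k_1,e_1,f_1$ is of type $(\ast_5)$ and the triple $k_2,e_2,f_2$ of type $(\ast_6)/(\ast_7)$, Lemmas~\ref{tt1} and~\ref{t3} guarantee that each is separately a $U_q(sl(2))$-module algebra, so the check above promotes them to the desired $U_q(sl(3))$-symmetry.

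The substantive step is uniqueness. Here I would start from the full general forms of $k_1,e_1,f_1$ provided by Lemma~\ref{tt1} and of $k_2,e_2,f_2$ provided by Lemma~\ref{t3}, each still carrying infinitely many undetermined coefficients. Because $k_1$ and $k_2$ act diagonally on monomials, the relations $k_1e_2(u)=q^{-1}e_2k_1(u)$, $k_1f_2(u)=qf_2k_1(u)$ of (\ref{e11}) and $k_2e_1(u)=q^{-1}e_1k_2(u)$, $k_2f_1(u)=qf_1k_2(u)$ of (\ref{r10}) force each image $e_i(x_j),f_i(x_j)$ to be supported on monomials of a single prescribed $k_1$- (resp.\ $k_2$-) weight. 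Tabulating those weights eliminates everything except the distinguished low-degree terms ($b_3x_2,\ b_3^{-1}x_3$ in the first triple and $b_0,\ b_0^{-1}x_1x_2,\ -qb_0^{-1}x_2^2,\ -qb_0^{-1}x_2x_3$ in the second) together with a few infinite families of high-degree monomials indexed by $m\ge 0$.

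To annihilate the surviving families I would argue as in Case $(A4)|(B5)$, feeding the reduced expressions into $e_if_i-f_ie_i=\frac{k_i-k_i^{-1}}{q-q^{-1}}$ (whose output is linear in the generators) and into the cross and Serre relations (\ref{r11})--(\ref{e12}). Among the monomials produced, the one of strictly largest total degree cannot be cancelled and appears with a coefficient that is a nonzero multiple of the leading tail coefficient; this forces that coefficient to vanish, and descending through the degrees kills every high-degree coefficient. What is left is precisely the six displayed actions with $b_0,b_3\in\mathbb{C}\setminus\{0\}$ free. Finally, Proposition~\ref{w3} sanctions the diagonal automorphism $\psi\colon x_1\mapsto x_1,\ x_2\mapsto b_0x_2,\ x_3\mapsto b_0b_3x_3$, and a routine check of $\psi\pi(h)\psi^{-1}=\pi_{b_0=b_3=1}(h)$ on the six generators shows that every such structure is isomorphic to the one with $b_0=b_3=1$.

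The main obstacle is exactly this tail-killing step: both $(\ast_5)$ and $(\ast_6)/(\ast_7)$ contribute their own infinite families of a priori free coefficients, and the weight constraints alone do not close them off. The care needed is in arranging the highest-degree bookkeeping so that, for each relation invoked, the top monomial is genuinely unique and its coefficient is provably nonzero --- so that it forces a tail coefficient to vanish outright rather than merely relating two of them.
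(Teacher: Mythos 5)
Your proposal is correct and follows essentially the same route as the paper's proof: existence is checked on generators (via Lemma \ref{w8}), uniqueness is obtained by first cutting down the general forms of Lemmas \ref{tt1} and \ref{t3} using the cross relations (\ref{e11}) and (\ref{r10}) as weight constraints, then killing the residual infinite tails by a highest-degree (leading-monomial) analysis of $e_if_i-f_ie_i=\frac{k_i-k_i^{-1}}{q-q^{-1}}$, whose right-hand side has degree one; the normalisation uses exactly the same diagonal automorphism $x_1\mapsto x_1$, $x_2\mapsto b_0x_2$, $x_3\mapsto b_0b_3x_3$. The only cosmetic difference is that you also invoke the Serre and cross relations (\ref{r11})--(\ref{e12}) in the tail-killing step, whereas the paper needs only the commutator relation there.
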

\begin{proof}
It is easy to check that the above actions of $k_1$, $e_1$, $f_1$ and $k_2$, $e_2$, $f_2$ determine module-algebra structures of $U_q(sl(3))$ on $A_q(3)$.

Then, we prove that there are no other actions except these in this lemma.

By (\ref{e11}) for any $u\in\{x_1, x_2, x_3\}$ and Lemma \ref{t3}, we have
\begin{eqnarray*}
&&e_2(x_1)=(q-q^3)ux_1^4x_3+\sum_{n\geq 0}(q-q^{2n+5})v_nx_1^{3n+7}x_2^{n+1}x_3^{n+2},\\
&&e_2(x_2)=b_0+(q^4-1)ux_1^3x_2x_3+\sum_{n\geq 0}(q^{3n+7}-q^{n+1})v_nx_1^{3n+6}x_2^{n+2}x_3^{n+2},\\
&&e_2(x_3)=(q^4-1)ux_1^3x_3^2+\sum_{n\geq 0}(q^{4n+8}-1)v_nx_1^{3n+6}x_2^{n+1}x_3^{n+3},\\
&&f_2(x_1)=gx_1x_2+(q^3-q^{-1})\varepsilon x_1^4x_2^2x_3+\sum_{p\geq 0}(q^{2p+5}-q^{-1})\mu_px_1^{3p+7}x_2^{p+3}x_3^{p+2},\\
&&f_2(x_2)=-qgx_2^2+(q-q^5)\varepsilon x_1^3x_2^3x_3+\sum_{p\geq 0}(q^{p+2}-q^{3p+8})\mu_px_1^{3p+6}x_2^{p+4}x_3^{p+2},\\
&&f_2(x_3)=-qgx_2x_3+(1-q^6)\varepsilon x_1^{3}x_2^{2}x_3^2+\sum_{p\geq 0}(1-q^{4p+10})\mu_px_1^{3p+6}x_2^{p+3}x_3^{p+3}.
\end{eqnarray*}

Then, we consider the condition $e_2f_2(u)-f_2e_2(u)=\frac{k_2-k_2^{-1}}{q-q^{-1}}(u)$ for any $u\in\{x_1, x_2, x_3\}$.

Let us assume that there exist some $u$ or $v_n$ which are not equal to zero. Then, we can choose the monomials in $e_2(x_1)$, $e_2(x_2)$, $e_2(x_3)$ with the highest degree. Obviously, these monomials are unique. It is also easy to see that $f(x_1)$, $f(x_2)$, $f(x_3)$ can not be equal to zero simultaneously. Therefore, there are some nonzero $g$, $\varepsilon$ or $\mu_p$. Similarly, those monomials in $f_2(x_1)$, $f_2(x_2)$, $f_2(x_3)$ with the highest degree are chosen. Then, by some computations, we can obtain a monomial with the highest degree, whose degree is larger than 1. Then, we get a contradiction with $e_2f_2(x_1)-f_2e_2(x_1)=x_1$. For example, if the coefficient of the monomials in $e_2(x_1)$, $e_2(x_2)$ and $e_2(x_3)$ is $v_{n_e}$ and the coefficient of the monomials in $f_2(x_1)$, $f_2(x_2)$, $f_2(x_3)$ with the highest degree is $\mu_{p_f}$, then the monomial with the highest degree in $e_2f_2(x_1)-f_2e_2(x_1)$ is\\
$$q^{7n_ep_f+15n_e+11p_f+22}(1-q^{2n_e+2p_f+10})^2v_{n_e}\mu_{p_f}x_1^{3n_e+3p_f+13}x_2^{n_e+p_f+4}x_3^{n_e+p_f+4}.$$

Therefore, all $u$, $v_n$ are equal to zero. Then, $e_2(x_1)=0$, $e_2(x_2)=b_0$ and $e_2(x_3)=0$.
Thus, we can obtain
\begin{eqnarray*}
&&e_2f_2(x_1)-f_2e_2(x_1)\\
&=&gb_0x_1+(1-q^{-4})(1+q^2)\varepsilon b_0x_1^4x_2x_3\\
&&+\sum_{p\geq 0}\frac{1-q^{2p+6}}{1-q^2}(q^{-1-p}-q^{-3p-7})b_0\mu_px_1^{3p+7}x_2^{p+2}x_3^{p+2}.
\end{eqnarray*}
Thus, we obtain $f_2(x_1)=b_0^{-1}x_1x_2$, $f_2(x_2)=-qb_0^{-1}x_2^2$ and $f_2(x_3)=-qb_0^{-1}x_2x_3$.

On the other hand, with a similar discussion, by (\ref{r10}), Lemma \ref{tt1} and $e_1f_1(u)-f_1e_1(u)=\frac{k_1-k_1^{-1}}{q-q^{-1}}(u)$ for any $u\in\{x_1, x_2, x_3\}$, we can obtain $e_1(x_1)=0$, $e_1(x_2)=0$, $e_1(x_3)=b_3x_2$,
$f_1(x_1)=0$, $f_1(x_2)=b_3^{-1}x_3$, $f_1(x_3)=0$.

Therefore, there are no other actions except these in this lemma.

Finally, we claim that all module algebra structures with nonzero $b_0$, $b_3$ are isomorphic to that with $b_0=b_3=1$. The desired isomorphism is given by $$\psi_{b_0, b_3}:x_1\mapsto x_1, x_2\mapsto b_0x_2, x_3\mapsto b_0b_3x_3.$$
\end{proof}
\begin{lemma}\label{SS2}
For Case $(A4)|(B7)$, all module-algebra structures of $U_q(sl(3))$ on $A_q(3)$ are as follows
\begin{eqnarray*}
&&k_1(x_1)=qx_1,~~k_1(x_2)=q^{-1}x_2,~~k_1(x_3)=x_3,\\
&&k_2(x_1)=qx_1,~~k_2(x_2)=q^2x_2,~~k_2(x_3)=q^{-1}x_3,\\
&&e_1(x_1)=0,~~e_1(x_2)=a_2x_1,~~e_1(x_3)=0,\\
&&e_2(x_1)=-qe_0^{-1}x_1x_2,~~e_2(x_2)=-qe_0^{-1}x_2^2,~~e_2(x_3)=e_0^{-1}x_2x_3,\\
&&f_1(x_1)=a_2^{-1}x_2,~~f_1(x_2)=0,~~f_1(x_3)=0,\\
&&f_2(x_1)=0,~~f_2(x_2)=e_0,~~f_2(x_3)=0,
\end{eqnarray*}
where $a_2$, $e_0\in \mathbb{C}\backslash\{0\}$.

All module-algebra structures are isomorphic to that with $a_2=e_0=1$.
\end{lemma}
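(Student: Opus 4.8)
The plan is to follow the template of Lemma~\ref{S5} and Lemma~\ref{S4}: first check sufficiency on generators, then prove uniqueness by a weight-and-degree argument, and finally normalize by a diagonal automorphism. For sufficiency I would verify that the six displayed formulas define a $U_q(sl(3))$-module algebra structure. By Lemma~\ref{w8} it suffices to test (\ref{e11})--(\ref{e12}) together with $e_if_i(u)-f_ie_i(u)=\frac{k_i-k_i^{-1}}{q-q^{-1}}(u)$ on $u\in\{x_1,x_2,x_3\}$; since $(k_1,e_1,f_1)$ is the special structure of Lemma~\ref{t1} and $(k_2,e_2,f_2)$ is the $a_3=0$, $\alpha_p=0$ specialization of the special structure of Lemma~\ref{S2}, each relation reduces to a short direct computation.

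For uniqueness I would start from the general shapes of $(A4)$ and $(B7)$ given by Lemma~\ref{t1} and Lemma~\ref{S2} and impose the $k$-compatibility relations (\ref{e11}) and (\ref{r10}). The key is a weight count. Since $k_1$ acts on $(x_1,x_2,x_3)$ with eigenvalues $(q,q^{-1},1)$, the relation $k_1e_2(u)=q^{-1}e_2k_1(u)$ forces each $e_2(x_j)$ into a single $k_1$-eigenspace. In particular, because $k_1(x_3)=x_3$, the image $e_2(x_3)$ must be a $k_1$-eigenvector of eigenvalue $q^{-1}$; the monomial $a_3x_1$ has eigenvalue $q$ and each $\alpha_px_1^{p+2}x_3^{p+1}$ has eigenvalue $q^{p+2}$, so both are annihilated and we get $a_3=0$, $\alpha_p=0$ at once. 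This is precisely the point that distinguishes $(A4)|(B7)$ from $(A2)|(B7)$ in Lemma~\ref{S4}, where $k_1(x_3)=q^{-1}x_3$ instead leaves room for $a_3\neq0$. Applying (\ref{r10}) symmetrically, with $k_2$ acting by $(q,q^2,q^{-1})$, trims $e_1,f_1$ down to their leading monomials together with a sparse family of higher-degree monomials allowed by the grading.

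The main obstacle is to eliminate these surviving higher-degree terms, since the weight constraints alone do not finish the job: just as in Lemma~\ref{S5}, monomials such as $\widehat{\alpha_1}x_1^2x_2^2x_3^3$ and the family $\alpha_{p+2,p}x_1^{p+3}x_2^{p+3}x_3^{3p+6}$ still pass the $k_1$-grading. To kill them I would use the Cartan relation $e_2f_2(u)-f_2e_2(u)=\frac{k_2-k_2^{-1}}{q-q^{-1}}(u)$, whose right-hand side has degree one, and reproduce the degree argument of Lemma~\ref{S5}: assuming some higher coefficient is nonzero, I would select the terms of maximal total degree in $e_2$ and in $f_2$, expand the commutator, and isolate a single monomial of degree strictly greater than one that cannot be cancelled, a contradiction. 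This forces all higher-order coefficients to vanish, so $e_2,f_2$ collapse to the stated forms, and the analogous argument with the index-$1$ Cartan relation does the same for $e_1,f_1$. Identifying which monomial is genuinely of top degree and confirming that it survives in the commutator is the only delicate bookkeeping.

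Finally, the normalization is a routine application of Proposition~\ref{w3}. Every automorphism of $A_q(3)$ is diagonal, $\psi:x_i\mapsto\lambda_ix_i$, and it automatically commutes with the diagonal $k_1,k_2$. The requirements $\psi f_2\psi^{-1}(x_2)=1$ and $\psi e_1\psi^{-1}(x_2)=x_1$ then force $\lambda_2=e_0$ and $\lambda_1=e_0a_2^{-1}$, while $\lambda_3$ stays free because no defining formula constrains the scale of $x_3$. A direct check shows that this $\psi$ intertwines the general structure with the one having $a_2=e_0=1$, which completes the proof.
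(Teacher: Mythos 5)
Your proposal is correct and follows essentially the same route as the paper: the paper proves this lemma by reference to Lemma~\ref{S5}, whose argument is exactly your three-step scheme (sufficiency check on generators, reduction via the $k$-compatibility weight constraints, elimination of the surviving higher-degree families through the Cartan relation $e_if_i-f_ie_i=\frac{k_i-k_i^{-1}}{q-q^{-1}}$ and a highest-degree contradiction, then normalization by a diagonal automorphism). Your identification of the surviving monomials and the normalizing map $x_1\mapsto e_0a_2^{-1}x_1$, $x_2\mapsto e_0x_2$, $x_3\mapsto x_3$ matches what that argument yields.
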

\begin{proof}
The proof is similar to that in Lemma \ref{S5}.
\end{proof}
Since the actions of $k_1$, $e_1$, $f_1$ and $k_2$, $e_2$, $f_2$ in $U_q(sl(3))$ are symmetric, by Lemma \ref{SS1}-Lemma \ref{SS2} and the discussion above, we obtain the following theorem.
\begin{theorem}\label{t10}
$U_q(sl(3))$-symmetries up to isomorphisms on $A_q(3)$ and their classical limits, i.e., Lie algebra $sl_3$-actions by differentiations on $\mathbb{C}[x_1,x_2,x_3]$ are as follows:\\
\begin{tabular}{|l|l|}
\hline $U_q(sl(3))$-symmetries &  Classical limit \\
& $sl_3$-actions on $\mathbb{C}[x_1,x_2,x_3]$ \\
\hline
$k_i(x_1)=\pm x_1$, $k_i(x_2)=\pm x_2$,  & $h_i(x_1)=0$, $h_i(x_2)=0$,   \\
$k_i(x_3)=\pm x_3,$ $k_j(x_1)=\pm x_1$, &$h_i(x_3)=0,$  $h_j(x_1)=0$,  \\
$k_j(x_2)=\pm x_2$, $k_j(x_3)=\pm x_3,$ &$h_j(x_2)=0$, $h_j(x_3)=0,$\\
$e_i(x_1)=e_i(x_2)=e_i(x_3)=0$,  & $e_i(x_1)=e_i(x_2)=e_i(x_3)=0$,\\
$f_i(x_1)=f_i(x_2)=f_i(x_3)=0,$ & $f_i(x_1)=f_i(x_2)=f_i(x_3)=0,$ \\
$e_j(x_1)=e_j(x_2)=e_j(x_3)=0$, & $e_j(x_1)=e_j(x_2)=e_j(x_3)=0$, \\
 $f_j(x_1)=f_j(x_2)=f_j(x_3)=0$ & $f_j(x_1)=f_j(x_2)=f_j(x_3)=0$ \\
\hline
$k_i(x_1)=q^{-2} x_1$, $k_i(x_2)=q^{-1} x_2$,  & $h_i(x_1)=-2x_1$, $h_i(x_2)=-x_2$, \\
$k_i(x_3)=q^{-1} x_3$, $k_j(x_1)=qx_1$, & $h_i(x_3)=-x_3$, $h_j(x_1)=x_1$,\\
 $k_j(x_2)=q x_2$, $k_j(x_3)=q^2 x_3,$ &  $h_j(x_2)=x_2$, $h_j(x_3)=2x_3,$\\
$e_i(x_1)=1$, $e_i(x_2)=0$, $e_i(x_3)=0,$ & $e_i(x_1)=1$, $e_i(x_2)=0$, $e_i(x_3)=0,$\\
$e_j(x_1)=-qx_1x_3$, $e_j(x_2)=-qx_2x_3$,& $e_j(x_1)=-x_1x_3$, $e_j(x_2)=-x_2x_3$, \\
$e_j(x_3)=-qx_3^2,$ $f_i(x_1)=-qx_1^2$, & $e_j(x_3)=-x_3^2,$ $f_i(x_1)=-x_1^2$,   \\
 $f_i(x_2)=-qx_1x_2$, $f_i(x_3)=-qx_1x_3,$ & $f_i(x_2)=-x_1x_2$, $f_i(x_3)=-x_1x_3,$ \\
$f_j(x_1)=0$, $f_j(x_2)=0$, $f_j(x_3)=1$ & $f_j(x_1)=0$, $f_j(x_2)=0$, $f_j(x_3)=1$\\
\hline
$k_i(x_1)=q^{-2} x_1$, $k_i(x_2)=q^{-1} x_2$,  & $h_i(x_1)=-2 x_1$, $h_i(x_2)=-x_2$,\\
$k_i(x_3)=q^{-1} x_3,$ $k_j(x_1)=qx_1$, &  $h_i(x_3)=-x_3,$ $h_j(x_1)=x_1$,\\
$k_j(x_2)=q^{-1} x_2$, $k_j(x_3)=x_3,$ &  $h_j(x_2)=- x_2$, $h_j(x_3)=0,$ \\
$e_i(x_1)=1$, $e_i(x_2)=0$, $e_i(x_3)=0,$ & $e_i(x_1)=1$, $e_i(x_2)=0$, $e_i(x_3)=0,$ \\
$e_j(x_1)=0$, $e_j(x_2)=x_1$, $e_j(x_3)=0,$ & $e_j(x_1)=0$, $e_j(x_2)=x_1$, $e_j(x_3)=0,$  \\
$f_i(x_1)=-qx_1^2$, $f_i(x_2)=-qx_1x_2$,  & $f_i(x_1)=-x_1^2$, $f_i(x_2)=-x_1x_2$, \\
$f_i(x_3)=-qx_1x_3,$ & $f_i(x_3)=-x_1x_3,$ \\
$f_j(x_1)=x_2$, $f_j(x_2)=0,$ $f_j(x_3)=0$ & $f_j(x_1)=x_2$, $f_j(x_2)=0,$ $f_j(x_3)=0$ \\
\hline
$k_i(x_1)=q x_1$, $k_i(x_2)=qx_2$, & $h_i(x_1)=x_1$, $h_i(x_2)=x_2$,  \\
 $k_i(x_3)=q^{2} x_3$, $k_j(x_1)=x_1$,& $h_i(x_3)=2x_3$, $h_j(x_1)=0$,  \\
  $k_j(x_2)=qx_2$, $k_j(x_3)=q^{-1}x_3,$ & $h_j(x_2)=x_2$, $h_j(x_3)=-x_3,$\\
$e_i(x_1)=-qx_1x_3$, $e_i(x_2)=-qx_2x_3$,  & $e_i(x_1)=-x_1x_3$, $e_i(x_2)=-x_2x_3$, \\
$e_i(x_3)=-qx_3^2,$ $e_j(x_1)=0$, & $e_i(x_3)=-x_3^2,$ $e_j(x_1)=0$, \\
$e_j(x_2)=0$, $e_j(x_3)=x_2,$ & $e_j(x_2)=0$, $e_j(x_3)=x_2,$\\
$f_i(x_1)=0$, $f_i(x_2)=0$, $f_i(x_3)=1,$ & $f_i(x_1)=0$, $f_i(x_2)=0$, $f_i(x_3)=1,$  \\
$f_j(x_1)=0$, $f_j(x_2)=x_3$, $f_j(x_3)=0$ & $f_j(x_1)=0$, $f_j(x_2)=x_3$, $f_j(x_3)=0$  \\
\hline
$k_i(x_1)=q x_1$, $k_i(x_2)=q^{-1}x_2$, & $h_i(x_1)= x_1$, $h_i(x_2)=-x_2$, \\
$k_i(x_3)=x_3,$ $k_j(x_1)=x_1$,& $h_i(x_3)=0,$ $h_j(x_1)=0$,  \\
 $k_j(x_2)=qx_2$, $k_j(x_3)=q^{-1}x_3,$ &$h_j(x_2)=x_2$, $h_j(x_3)=-x_3,$ \\
$e_i(x_1)=0$, $e_i(x_2)=x_1$, $e_i(x_3)=0,$ & $e_i(x_1)=0$, $e_i(x_2)=x_1$, $e_i(x_3)=0,$\\
$e_j(x_1)=0$, $e_j(x_2)=0$, $e_j(x_3)=x_2,$ & $e_j(x_1)=0$, $e_j(x_2)=0$, $e_j(x_3)=x_2,$\\
$f_i(x_1)=x_2$, $f_i(x_2)=0$, $f_i(x_3)=0,$ & $f_i(x_1)=x_2$, $f_i(x_2)=0$, $f_i(x_3)=0,$ \\
$f_j(x_1)=0$, $f_j(x_2)=x_3$, $f_j(x_3)=0$ & $f_j(x_1)=0$, $f_j(x_2)=x_3$, $f_j(x_3)=0$ \\
\hline
$k_i(x_1)=q x_1$, $k_i(x_2)=q^{-1}x_2$, & $h_i(x_1)=x_1$, $h_i(x_2)=-x_2$, \\
$k_i(x_3)=x_3,$ $k_j(x_1)=qx_1 $, & $h_i(x_3)=0,$ $h_j(x_1)=x_1 $, \\
$k_j(x_2)=q^2x_2$, $k_j(x_3)=q^{-1}x_3,$ & $h_j(x_2)=2x_2$, $h_j(x_3)=-x_3,$\\
$e_i(x_1)=0$, $e_i(x_2)=x_1$, $e_i(x_3)=0,$ & $e_i(x_1)=0$, $e_i(x_2)=x_1$, $e_i(x_3)=0,$ \\
$e_j(x_1)=-qx_1x_2$, $e_j(x_2)=-qx_2^2$,& $e_j(x_1)=-x_1x_2$, $e_j(x_2)=-x_2^2$,\\
 $e_j(x_3)=x_2x_3,$ $f_i(x_1)=x_2$,& $e_j(x_3)=x_2x_3,$  $f_i(x_1)=x_2$,  \\
 $f_i(x_2)=0$, $f_i(x_3)=0,$ &$f_i(x_2)=0$, $f_i(x_3)=0,$\\
$f_j(x_1)=0$, $f_j(x_2)=1$, $f_j(x_3)=0$ & $f_j(x_1)=0$, $f_j(x_2)=1$, $f_j(x_3)=0$ \\
\end{tabular}
\\
\begin{tabular}{|l|l|}
\hline
$k_i(x_1)=x_1$, $k_i(x_2)=qx_2$,& $h_i(x_1)=0$, $h_i(x_2)=x_2$, \\
 $k_i(x_3)=q^{-1}x_3,$ $k_j(x_1)=qx_1$,  & $h_i(x_3)=-x_3,$ $h_j(x_1)=x_1$,  \\
 $k_j(x_2)=q^{-2}x_2$,$k_j(x_3)=q^{-1}x_3,$  &  $h_j(x_2)=-2x_2$, $h_j(x_3)=-x_3,$\\
$e_i(x_1)=0$, $e_i(x_2)=0$, &  $e_i(x_1)=0$, $e_i(x_2)=0$, \\
$e_i(x_3)=x_2,$ $e_j(x_1)=0$,& $e_i(x_3)=x_2,$ $e_j(x_1)=0$,\\
 $e_j(x_2)=1$, $e_j(x_3)=0,$ &  $e_j(x_2)=1$, $e_j(x_3)=0,$\\
$f_i(x_1)=0$, $f_i(x_2)=x_3$, $f_i(x_3)=0,$ & $f_i(x_1)=0$, $f_i(x_2)=x_3$, $f_i(x_3)=0,$ \\
$f_j(x_1)=x_1x_2$, $f_j(x_2)=-qx_2^2$,  & $f_j(x_1)=x_1x_2$, $f_j(x_2)=-x_2^2$,\\
$f_j(x_3)=-qx_2x_3$ & $f_j(x_3)=-x_2x_3$  \\
\hline
$k_i(x_1)=q^{-2}x_1$, $k_i(x_2)=q^{-1}x_2$, & $h_i(x_1)=-2x_1$, $h_i(x_2)=-x_2$, \\
 $k_i(x_3)=q^{-1}x_3$, $k_j(x_1)=qx_1$, & $h_i(x_3)=-x_3,$ $h_j(x_1)=x_1$, \\
 $k_j(x_2)=q^2x_2$, $k_j(x_3)=q^{-1}x_3$, & $h_j(x_2)=2x_2$, $h_j(x_3)=-x_3,$\\
$e_i(x_1)=1$, $e_i(x_2)=0$, $e_i(x_3)=0$, &$e_i(x_1)=1$, $e_i(x_2)=0$, $e_i(x_3)=0$,\\
$e_j(x_1)=-qx_1x_2$, $e_j(x_2)=-qx_2^2$,& $e_j(x_1)=-x_1x_2$, $e_j(x_2)=-x_2^2$,\\
$e_j(x_3)=x_1+x_2x_3$, $f_i(x_1)=-qx_1^2,$ & $e_j(x_3)=x_1+x_2x_3$, $f_i(x_1)=-x_1^2,$\\
$f_i(x_2)=-qx_1x_2$,& $f_i(x_2)=-x_1x_2$, \\
 $f_i(x_3)=-qx_1x_3-qx_2x_3^2$,& $f_i(x_3)=-x_1x_3-x_2x_3^2$,\\
$f_j(x_1)=0$, $f_j(x_2)=1$, $f_j(x_3)=0$ & $f_j(x_1)=0$, $f_j(x_2)=1$, $f_j(x_3)=0$\\
\hline
$k_i(x_1)=qx_1$, $k_i(x_2)=qx_2$, & $h_i(x_1)=x_1$, $h_i(x_2)=x_2$,\\
 $k_i(x_3)=q^2x_3$, $k_j(x_1)=qx_1$, &  $h_i(x_3)=2x_3$, $h_j(x_1)=x_1$, \\
 $k_j(x_2)=q^{-2}x_2$, $k_j(x_3)=q^{-1}x_3$, & $h_j(x_2)=-2x_2$, $h_j(x_3)=-x_3$,\\
 $e_i(x_1)=-qx_1x_3-qx_1^2x_2$,  &   $e_i(x_1)=-x_1x_3-x_1^2x_2$, \\
$e_i(x_2)=-qx_2x_3$, $e_i(x_3)=-qx_3^2$,           & $e_i(x_2)=-yz$, $e_i(x_3)=-x_3^2$,\\
 $e_j(x_1)=0$, $e_j(x_2)=1$, $e_j(x_3)=0$,&  $e_j(x_1)=0$, $e_j(x_2)=1$, $e_j(x_3)=0$, \\
$f_i(x_1)=0$, $f_i(x_2)=0$, $f_i(x_3)=1$,& $f_i(x_1)=0$, $f_i(x_2)=0$, $f_i(x_3)=1$,\\
$f_j(x_1)=x_3+x_1x_2$, $f_j(x_2)=-qx_2^2$,& $f_j(x_1)=x_3+x_1x_2$, $f_j(x_2)=-x_2^2$,\\
$f_j(x_3)=-qx_2x_3$& $f_j(x_3)=-x_2x_3$\\
\hline
\end{tabular}\\
for any $i=1$, $j=2$ or $i=2$, $j=1$. Moreover, there are no isomorphisms between these nine kinds of module-algebra structures.
\end{theorem}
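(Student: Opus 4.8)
The plan is to assemble Theorem \ref{t10} from the nine single-$U_q(sl(2))$ classifications already established, treating a $U_q(sl(3))$-module algebra structure as a compatible pair of $U_q(sl(2))$-structures. Restricting the $U_q(sl(3))$-action to the two Hopf subalgebras $A=\langle k_1,e_1,f_1\rangle$ and $B=\langle k_2,e_2,f_2\rangle$, each isomorphic to $U_q(sl(2))$, yields by Lemmas \ref{r8}--\ref{r9} two of the nine admissible types $(A_1),\dots,(A_9)$ and $(B_1),\dots,(B_9)$. The remaining content of a genuine $U_q(sl(3))$-structure is exactly the cross relations (\ref{e11})--(\ref{e12}) together with the commutators $[e_i,f_i]$; by Lemma \ref{w8} it suffices to verify all of these on the three generators $x_1,x_2,x_3$. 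Because the two nodes of the Dynkin diagram of $sl(3)$ are interchangeable, I would only examine the pairs $(Ai)\,|\,(Bj)$ with $i\le j$.

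Next I would run the exclusion. The action of each $k_\ell$ is diagonal with a fixed weight triple, so a relation such as $k_1e_2(x_r)=q^{-1}e_2k_1(x_r)$ forces a rigid matching between the weight of $e_2(x_r)$ prescribed by $(Bj)$ and that imposed by $(Ai)$; for most pairs these clash. In particular every diagonal pair $(Aj)\,|\,(Bj)$ with $j\ge 2$ is ruled out at once, since then $e_i$ and $f_i$ are simultaneously nonzero on overlapping generators and the cross relations cannot hold. This weight bookkeeping leaves precisely the nine surviving pairs $(A1)|(B1)$, $(A2)|(B3)$, $(A2)|(B4)$, $(A2)|(B7)$, $(A3)|(B5)$, $(A3)|(B6)$, $(A4)|(B5)$, $(A4)|(B7)$, $(A5)|(B6)$, each analysed in one of Lemmas \ref{SS1}--\ref{SS2}. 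Each such lemma shows that the surviving free constants $a_0,f_0,a_2,b_0,b_3,e_0,\dots$ can be normalised to prescribed values by a diagonal automorphism $\varphi:x_i\mapsto\lambda_ix_i$, so that each pair collapses to a single isomorphism class. These nine classes are exactly the nine rows of the first column of the table.

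To obtain the second column I would pass to the classical limit of each normalised representative by the substitution $k_i=q^{h_i}$ followed by the formal limit $q\to 1$, exactly as set up for $A_q(2)$ before Theorem \ref{T8}. Since the structure constants of the normalised actions are Laurent polynomials in $q$ with finite value at $q=1$, this limit is well defined and produces the stated $sl_3$-actions by differentiations on $\mathbb{C}[x_1,x_2,x_3]$, row by row.

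The remaining and genuinely independent assertion is that the nine kinds are pairwise non-isomorphic. Here I would invoke Proposition \ref{w3}: every automorphism $\psi$ of $A_q(3)$ is diagonal, $\psi(x_i)=\lambda_ix_i$, and therefore commutes with the diagonal operators $k_1$ and $k_2$. Consequently any isomorphism of module-algebra structures must intertwine the $k_1$- and $k_2$-actions, i.e.\ preserve the two weight triples $(\alpha,\beta,\gamma)$ read off from the entries $k_i(x_r)$. Inspecting the table, these combined $(k_1,k_2)$-weight data are distinct across the nine kinds, so no two of them can be isomorphic. The main obstacle of the whole argument is the completeness of the exclusion step---guaranteeing that no compatible pair outside the listed nine has been missed and that each listed pair has been solved in full generality---but this is precisely what Lemmas \ref{SS1}--\ref{SS2} and the preceding discussion accomplish; the non-isomorphism claim then rests only on the diagonality of automorphisms furnished by Proposition \ref{w3}.
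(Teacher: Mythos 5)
Your proposal is correct and follows essentially the same route as the paper: restriction to the two $U_q(sl(2))$ Hopf subalgebras, reduction to the pairs $(Ai)|(Bj)$ with $i\le j$ by symmetry, exclusion of incompatible pairs via the weight relations and the diagonal cases, resolution of the nine surviving pairs by Lemmas \ref{SS1}--\ref{SS2} with normalization by diagonal automorphisms, the classical limit via $k_i=q^{h_i}$, $q\to 1$, and non-isomorphism from Proposition \ref{w3} since diagonal automorphisms commute with the $k$-actions, which differ across the nine kinds. This is precisely how the paper assembles Theorem \ref{t10} from the preceding discussion and lemmas.
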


\begin{remark}
Case (5) when $i=1$, $j=2$ in Theorem \ref{t10} is the case discussed in \cite{hu2000} when $n=3$.
\end{remark}

Let us denote the actions of $U_q(sl(2))$ on $A_q(3)$ in (A1), those in (A2) and (B3) in Lemma \ref{r20}, those in (B4) in Lemma \ref{S3}, those in (B5) in Lemma \ref{r17}, those in (B6) in Lemma \ref{S5} and those in (B7) in Lemma \ref{SS2} by $\star1$, $\star2$, $\star3$, $\star4$, $\star5$, $\star6$, $\star7$ respectively. 
In addition, denote the actions of $U_q(sl(2))$ on $A_q(3)$ in (A2) and (B7) in Lemma \ref{S4}, those in (A3) and (B6) in Lemma \ref{r18} by $\star2'$, $\star7'$, $\star3'$,  $\star6'$ respectively. 
Then, as in Section 3, we can use the following diagrams to denote all actions of $U_q(sl(3))$ on $A_q(3)$:
\begin{eqnarray}\label{w4}
\xymatrix{{\star1}\ar@{-}[r]& {\star1}},~~~\xymatrix{{\star7'}\ar@{-}[r]&{\star2'}}, ~~
\xymatrix{{\star3'}\ar@{-}[r]\ar@{-}[r]& {\star6'}},\\
\label{r22}\xymatrix{ &{\star2}\ar@{-}[r]\ar@{-}[d]& {\star3}\ar@{-}[d]& \\
{\star7}\ar@{-}[r]&{\star4}\ar@{-}[r]&{\star5}\ar@{-}[r]&{\star6} }.
\end{eqnarray}
Just like that in Section 3, every two adjacent vertices corresponds to two classes of the module-algebra structures of $U_q(sl(3))$ on $A_q(3)$.

Then, we begin to study the module-algebra structures of $U_q(sl(m+1))$ on $A_q(3)$ for $m\geq 3$. For studying the module-algebra structures of $U_q(sl(m+1))$ on $A_q(3)$, we have to endow every vertex in the Dynkin diagram of $sl(m+1)$ an action of $U_q(sl(2))$ on $A_q(3)$.
As in Section 3, there are some rules which we should obey:
\begin{itemize}
\item[1.] Since every two adjacent vertices in the Dynkin diagram corresponds to one Hopf subalgebra isomorphic to $U_q(sl(3))$, by Theorem \ref{t10},
the action of $U_q(sl(2))$ on $A_q(3)$ on every vertex should be of the following 11 kinds of possibilities: $\star1$, $\star2$, $\star3$, $\star4$, $\star5$, $\star6$, $\star7$, $\star2'$, $\star7'$, $\star3'$, $\star6'$. Moreover, every two adjacent vertices should be of the types in (\ref{w4}) and (\ref{r22}).
\item[2.] Except $\star 1$, any other type of actions of $U_q(sl(2))$ on $A_q(3)$ can not be endowed with two different vertices simultaneously, since the relations (\ref{w2}) acting on $x_1$, $x_2$, $x_3$ producing zero can not be satisfied.
\item[3.]If every vertex in the Dynkin diagram of $sl(m+1)$ is endowed an action of $U_q(sl(2))$ on $A_q(3)$ which is not Case $\star1$, any two vertices which are not adjacent can not be endowed with the types which are adjacent (\ref{w4}) and (\ref{r22}).
\end{itemize}

\begin{theorem}\label{r33}
If $m\geq 4$, all module-algebra structures of $U_q(sl(m+1))$ on $A_q(3)$ are as follows
\begin{eqnarray*}
&&k_i(x_1)=\pm x_1, \qquad k_i(x_2)=\pm x_2,  \qquad k_i(x_3)=\pm x_3,\\
&&e_i(x_1)=e_i(x_2)=e_i(x_3)=f_i(x_1)=f_i(x_2)=f_i(x_3)=0,
\end{eqnarray*}
for any $i\in\{1,2,\cdots, m\}$.

For $m=3$, all module-algebra structures of $U_q(sl(4))$ on $A_q(3)$ are given by\\
(1)
\begin{eqnarray*}
&&k_i(x_1)=\pm x_1, \qquad k_i(x_2)=\pm x_2,  \qquad k_i(x_3)=\pm x_3,\\
&&e_i(x_1)=e_i(x_2)=e_i(x_3)=f_i(x_1)=f_i(x_2)=f_i(x_3)=0,
\end{eqnarray*}
for any $i\in\{1,2,3\}$. All these module-algebra structures are not pairwise non-isomorphic.\\
(2)\begin{eqnarray*}
&&k_i(x_1)=qx_1,~~k_i(x_2)=q^{-1}x_2,~~k_i(x_3)=x_3, \\
&&e_i(x_1)=0,~~e_i(x_2)=a_2x_1,~~e_i(x_3)=0,\\
&&f_i(x_1)=a_2^{-1}x_2,~~f_i(x_2)=0,~~f_i(x_3)=0,\\
&&k_j(x_1)=q^{-2}x_1,~~k_j(x_2)=q^{-1}x_2,~~k_j(x_3)=q^{-1}x_3,\\
&&e_j(x_1)=a_0,~~e_j(x_2)=0,~~e_j(x_3)=0,\\
&&f_j(x_1)=-qa_0^{-1}x_1^2,~~f_j(x_2)=-qa_0^{-1}x_1x_2,~~f_j(x_3)=-qa_0^{-1}x_1x_3,\\
&&k_s(x_1)=qx_1,~~k_s(x_2)=qx_2, k_s(x_3)=q^2x_3,\\
&&e_s(x_1)=-qf_0^{-1}x_1x_3,~~e_s(x_2)=-qf_0^{-1}x_2x_3,~~e_s(x_3)=-qf_0^{-1}x_3^2,\\
&&f_s(x_1)=0,~~f_s(x_2)=0,~~f_s(x_3)=f_0,
\end{eqnarray*}
where $a_2$, $a_0$, $f_0\in\mathbb{C}\backslash\{0\}$ and $i=1$, $j=2$, $k=3$ or $i=3$, $j=2$, $k=1$. All these module-algebra structures are isomorphic to that with $a_0=a_2=f_0=1$.\\
(3)\begin{eqnarray*}
&&k_i(x_1)=qx_1,~~k_i(x_2)=q^{-1}x_2,~~k_i(x_3)=x_3, \\
&&e_i(x_1)=0,~~e_i(x_2)=a_2x_1,~~e_i(x_3)=0,\\
&&f_i(x_1)=a_2^{-1}x_2,~~f_i(x_2)=0,~~f_i(x_3)=0,\\
&&k_j(x_1)=x_1,~~k_j(x_2)=qx_2,~~k_j(x_3)=q^{-1}x_3,\\
&&e_j(x_1)=0,~~e_j(x_2)=0,~~e_j(x_3)=b_3x_2,\\
&&f_j(x_1)=0,~~f_j(x_2)=b_3^{-1}x_3,~~f_j(x_3)=0,\\
&&k_s(x_1)=qx_1,~~k_s(x_2)=qx_2,~~k_s(x_3)=q^2x_3,\\
&&e_s(x_1)=-qf_0^{-1}x_1x_3,~~e_s(x_2)=-qf_0^{-1}x_2x_3,~~e_s(x_3)=-qf_0^{-1}x_3^2,\\
&&f_s(x_1)=0,~~f_s(x_2)=0,~~f_s(x_3)=f_0,
\end{eqnarray*}
where $a_2$, $b_3$, $f_0\in\mathbb{C}\backslash\{0\}$ and $i=1$, $j=2$, $s=3$ or $i=3$, $j=2$, $s=1$. All these module-algebra structures are isomorphic to that with $a_2=b_3=f_0=1$.\\
(4)\begin{eqnarray*}
&&k_i(x_1)=q^{-2}x_1,~~k_i(x_2)=q^{-1}x_2,~~k_i(x_3)=q^{-1}x_3,\\
&&e_i(x_1)=a_0,~~e_i(x_2)=0,~~e_i(x_3)=0,\\
&&f_i(x_1)=-qa_0^{-1}x_1^2,~~f_i(x_2)=-qa_0^{-1}x_1x_2,~~f_i(x_3)=-qa_0^{-1}x_1x_3,\\
&&k_j(x_1)=qx_1,~~k_j(x_2)=qx_2,~~k_j(x_3)=q^2x_3,\\
&&e_j(x_1)=-qf_0^{-1}x_1x_3,~~e_j(x_2)=-qf_0^{-1}x_2x_3,~~e_j(x_3)=-qf_0^{-1}x_3^2,\\
&&f_j(x_1)=0,~~f_j(x_2)=0,~~f_j(x_3)=f_0,\\
&&k_s(x_1)=x_1,~~k_s(x_2)=qx_2,~~k_s(x_3)=q^{-1}x_3,\\
&&e_s(x_1)=0,~~e_s(x_2)=0,~~e_s(x_3)=b_3x_2,\\
&&f_s(x_1)=0,~~f_s(x_2)=b_3^{-1}x_3,~~f_s(x_3)=0,
\end{eqnarray*}
where $b_3$, $a_0$, $f_0\in\mathbb{C}\backslash\{0\}$ and $i=1$, $j=2$, $k=3$ or $i=3$, $j=2$, $k=1$. All these module-algebra structures are isomorphic to that with $a_0=b_3=f_0=1$.\\
(5)\begin{eqnarray*}
&&k_i(x_1)=q^{-2}x_1,~~k_i(x_2)=q^{-1}x_2,~~k_i(x_3)=q^{-1}x_3,\\
&&e_i(x_1)=a_0,~~e_i(x_2)=0,~~e_i(x_3)=0,\\
&&f_i(x_1)=-qa_0^{-1}x_1^2,~~f_i(x_2)=-qa_0^{-1}x_1x_2,~~f_i(x_3)=-qa_0^{-1}x_1x_3,\\
&&k_j(x_1)=qx_1,~~k_j(x_2)=q^{-1}x_2,~~k_j(x_3)=x_3,\\
&&e_j(x_1)=0,~~e_j(x_2)=a_2x_1,~~e_j(x_3)=0,\\
&&f_j(x_1)=a_2^{-1}x_2,~~f_j(x_2)=0,~~f_j(x_3)=0,\\
&&k_s(x_1)=x_1,~~k_s(x_2)=qx_2,~~k_s(x_3)=q^{-1}x_3,\\
&&e_s(x_1)=0,~~e_s(x_2)=0,~~e_s(x_3)=b_3x_2,\\
&&f_s(x_1)=0,~~f_s(x_2)=b_3^{-1}x_3,~~f_s(x_3)=0,
\end{eqnarray*}
where $b_3$, $a_0$, $a_2\in\mathbb{C}\backslash\{0\}$ and $i=1$, $j=2$, $k=3$ or $i=3$, $j=2$, $k=1$. All these module-algebra structures are isomorphic to that with $a_0=a_2=b_3=1$.
\end{theorem}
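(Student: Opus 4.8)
The plan is to treat the problem as a labelling problem on the Dynkin diagram of $sl(m+1)$, which is the path with $m$ vertices, exactly as in Section 3. To each vertex I assign one of the eleven admissible $U_q(sl(2))$-actions on $A_q(3)$ labelled $\star1,\dots,\star7,\star2',\star3',\star6',\star7'$ (Lemmas \ref{r8}--\ref{r9}). Two constraints govern admissible labellings: by Theorem \ref{t10} the pair of labels on any two adjacent vertices must be one of the compatible pairs recorded in the diagrams (\ref{w4}) and (\ref{r22}); and by rule~2 no nontrivial type may be repeated. A third constraint comes from the non-adjacent generators: if $v_a$, $v_b$ are non-adjacent then $a_{ab}=0$, so the structure must satisfy the commuting relations $k_ae_b=e_bk_a$, $k_af_b=f_bk_a$, $e_ae_b=e_be_a$, $f_af_b=f_bf_a$, $e_af_b=f_be_a$ there. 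Since $\star1$ is compatible only with $\star1$ and no nontrivial $U_q(sl(3))$-structure mixes $\star1$ with a nontrivial type, a propagation argument along the connected path shows that a module-algebra structure either has all vertices of type $\star1$ (the trivial family, valid for every $m$) or has no $\star1$ at all. It therefore suffices to analyse the all-nontrivial labellings.

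For $m=3$ (the algebra $U_q(sl(4))$, three vertices $v_1-v_2-v_3$) I would first use Theorem \ref{t10}: the adjacent pairs $(v_1,v_2)$ and $(v_2,v_3)$ are each one of the nine $U_q(sl(3))$-structures, and matching the type on the shared vertex $v_2$ together with rules~1 and~3 leaves only the induced $3$-paths inside the component (\ref{r22}) as candidates; the primed types occur only in the two-vertex components of (\ref{w4}) and so cannot appear in a three-vertex chain. For each candidate I then impose the commuting relations on the remaining non-adjacent pair $(v_1,v_3)$ directly. The four chains built from $\{\star2,\star3,\star4,\star5\}$ pass this test and produce families (2)--(5); every chain that uses $\star6$ or $\star7$ fails, because its outer type shifts the relevant $k$-weight by a factor $\neq 1$, so a relation such as $k_{v_1}e_{v_3}=e_{v_3}k_{v_1}$ cannot hold. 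Together with the trivial family (1) this yields the stated $m=3$ list, and in particular every nontrivial $U_q(sl(4))$-structure uses only the types $\star2,\star3,\star4,\star5$.

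For $m\ge 4$ I would bootstrap from the $m=3$ result. Any three consecutive vertices span a Hopf subalgebra isomorphic to $U_q(sl(4))$, so every consecutive triple restricts to one of the families (1)--(5); propagating along overlapping triples shows that in the all-nontrivial case each triple is one of (2)--(5) and hence is labelled only by $\star2,\star3,\star4,\star5$. Chaining two overlapping triples that agree on their common edge produces, up to reversal, exactly one of $\star4\,\star2\,\star3\,\star5$, $\star3\,\star2\,\star4\,\star5$, $\star4\,\star5\,\star3\,\star2$, $\star3\,\star5\,\star4\,\star2$. In each of these the extreme vertices $v_1$ and $v_4$ are non-adjacent in the diagram yet carry a compatibility-adjacent pair of types; since a compatibility-adjacent pair shifts $k$-weight by $q^{\pm1}\neq 1$, the required relation $k_{v_1}e_{v_4}=e_{v_4}k_{v_1}$ fails. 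Thus no all-nontrivial labelling of four consecutive vertices exists, so (a fortiori for $m\ge 5$) only the trivial family survives, which is the assertion.

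The step I expect to be the real obstacle is the $m=3$ verification, and specifically the elimination of the chains containing $\star6$ or $\star7$: these satisfy rules~1--3 (their end types are \emph{not} adjacent in (\ref{w4})--(\ref{r22})) and are excluded only by an explicit computation of the non-adjacent relations, in which one checks that the weight-shift factor of the outer generator fails to equal $1$. Careful bookkeeping is also needed to confirm that the primed types, and the types $\star6,\star7$, genuinely cannot enter any structure once $m\ge 3$, so that the clean bootstrap in the $m\ge4$ step is justified.
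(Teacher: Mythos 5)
Your proposal is correct and is essentially the paper's own argument: label the Dynkin-diagram vertices by the eleven admissible types, constrain adjacent pairs by the compatibility diagrams (\ref{w4}) and (\ref{r22}), and eliminate the surviving candidates through the relations attached to non-adjacent generators. Your $m=3$ analysis (the primed types cannot occur in a three-vertex chain, the four chains inside $\{\star2,\star3,\star4,\star5\}$ survive and give families (2)--(5), and every chain through $\star6$ or $\star7$ dies because the outer $k$-weight shift is not $1$) coincides with the paper's computation, including its sample check $k_1e_3(x_3)=q^{2}b_3x_2\neq q^{-1}b_3x_2=e_3k_1(x_3)$ for $\star7$--$\star4$--$\star5$. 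Where you differ is the large-$m$ step: the paper disposes of $m\geq5$ directly (no path on five or more vertices in (\ref{r22}) obeys rule 3) and of $m=4$ by noting that each candidate four-chain contains a triple already excluded at $m=3$, whereas you bootstrap uniformly for all $m\geq4$ through valid triples, which leaves exactly the four chains $\star4\,\star2\,\star3\,\star5$, $\star3\,\star2\,\star4\,\star5$, $\star4\,\star5\,\star3\,\star2$, $\star3\,\star5\,\star4\,\star2$, and you kill these because their endpoints are compatibility-adjacent. That exclusion is sound: compatibility-adjacency means $k_ae_bk_a^{-1}=q^{-1}e_b$ holds as operators, so $k_ae_b=e_bk_a$ would force $e_b=0$, impossible for a nontrivial type; this is precisely the paper's rule 3, and your packaging has the mild advantage of treating $m=4$ and $m\geq5$ in one stroke. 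The one genuine omission: Theorem \ref{r33} also asserts that within each family (2)--(5) all structures are isomorphic to the representative with all parameters equal to $1$; the paper proves this by exhibiting explicit rescaling automorphisms (e.g.\ $\psi\colon x_1\mapsto a_0x_1$, $x_2\mapsto a_0a_2x_2$, $x_3\mapsto f_0x_3$ in case (2), legitimate by Proposition \ref{w3}), and your plan never addresses these isomorphism statements, so that (routine) step still needs to be added.
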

\begin{proof}
 First, we consider the case when $m\geq 5$. By the above discussion, since there are no paths in (\ref{r22}) whose length is larger than $4$ and any two vertices which are not adjacent in this path have no edge connecting them in (\ref{w4}) and (\ref{r22}), the unique possibility of putting the actions of $U_q(sl(2))$ on the $m$ vertices in the Dynkin diagram is as follows:
$$\xymatrix{
{\star1} \ar@{-}[r]& {\star1} \ar@{-}[r]&{\cdots}\ar@{-}[r]& {\star1} \ar@{-}[r]&{\star1}}.$$
Obviously, the above case determines the module-algebra structures of $U_q(sl(m+1))$ on $A_q(3)$.

Second, let us study the case when $m=3$. By the above rules, and because
the Dynkin diagram of $sl(4)$ is symmetric, we only need to check the following cases
\begin{eqnarray*}
&&\xymatrix{
{\star7} \ar@{-}[r]& {\star4} \ar@{-}[r]& {\star5}},~~~\xymatrix{
{\star7} \ar@{-}[r]& {\star4} \ar@{-}[r]& {\star2}},~~\xymatrix{
{\star4} \ar@{-}[r]& {\star2} \ar@{-}[r]& {\star3}},\\
&&\xymatrix{
{\star4} \ar@{-}[r]& {\star5} \ar@{-}[r]& {\star3}},~~\xymatrix{
{\star4} \ar@{-}[r]& {\star5} \ar@{-}[r]& {\star6}},~~~\xymatrix{
{\star2} \ar@{-}[r]& {\star3} \ar@{-}[r]& {\star5}},\\
&&\xymatrix{
{\star2} \ar@{-}[r]& {\star4} \ar@{-}[r]& {\star5}},~~~\xymatrix{
{\star3} \ar@{-}[r]& {\star5} \ar@{-}[r]& {\star6}},~~\xymatrix{
{\star1} \ar@{-}[r]& {\star1} \ar@{-}[r]& {\star1}}.
\end{eqnarray*}
To determine the module-algebra structures of $U_q(sl(4))$ on $A_q(3)$, we still have to check the following equalities
\begin{eqnarray*}
&&k_1e_3(u)=e_3k_1(u), k_1f_3(u)=f_3k_1(u), \\
&&k_3e_1(u)=e_1k_3(u), k_3f_1(u)=f_1k_3(u), \\
&&e_1f_3(u)=f_3e_1(u), e_3f_1(u)=f_1e_3(u),\\
&&e_1e_3(u)=e_3e_1(u), f_1f_3(u)=f_3f_1(u),
\end{eqnarray*}
for any $u\in\{x_1,x_2,x_3\}$. For $\xymatrix{
{\star7} \ar@{-}[r]& {\star4} \ar@{-}[r]& {\star5}}$, since $k_1e_3(z)=k_1(b_3y)=q^2b_3y$ and $e_3k_1(z)=q^{-1}b_3y$, $k_1e_3(z)\neq e_3k_1(z)$. Therefore, $\xymatrix{
{\star7} \ar@{-}[r]& {\star4} \ar@{-}[r]& {\star5}}$ is excluded. Similarly, we exclude $\xymatrix{
{\star7} \ar@{-}[r]& {\star4} \ar@{-}[r]& {\star2}}$, $\xymatrix{
{\star4} \ar@{-}[r]& {\star5} \ar@{-}[r]& {\star6}}$, and $\xymatrix{
{\star3} \ar@{-}[r]& {\star5} \ar@{-}[r]& {\star6}}$. Moreover, it is easy to check the five remaining cases determine the module-algebra structures of $U_q(sl(4))$ on $A_q(3)$.

Thirdly, we consider the case when $m=4$. By the discussion above, we only need to check the cases
\begin{eqnarray*}
&&\xymatrix{
{\star7} \ar@{-}[r]& {\star4} \ar@{-}[r]&{\star2} \ar@{-}[r]& {\star3}},~~\xymatrix{
{\star7} \ar@{-}[r]& {\star4} \ar@{-}[r]&{\star5} \ar@{-}[r]& {\star3}},\\
&&\xymatrix{
{\star7} \ar@{-}[r]& {\star4} \ar@{-}[r]&{\star5} \ar@{-}[r]& {\star6}},~~\xymatrix{
{\star2} \ar@{-}[r]& {\star3} \ar@{-}[r]&{\star5} \ar@{-}[r]& {\star6}},\\
&&\xymatrix{
{\star1} \ar@{-}[r]& {\star1} \ar@{-}[r]&{\star1} \ar@{-}[r]& {\star1}}.
\end{eqnarray*}
Since the three adjacent vertices in the Dynkin diagram of $sl(5)$ corresponds to one Hopf algebra isomorphic to $U_q(sl(4))$, by the results of the module-algebra structures of $U_q(sl(4))$ on $A_q(3)$, there is only one possibility:
$$\xymatrix{
{\star1} \ar@{-}[r]& {\star1} \ar@{-}[r]&{\star1} \ar@{-}[r]& {\star1}}.$$

Finally, we consider the isomorphism classes. Here, we only show that all module-algebra structures of $U_q(sl(4))$ on $A_q(3)$ in Case (2) are isomorphic to that with $a_0=a_2=f_0=1$. The desired isomorphism is given by $\psi_{a_0, a_2, f_0}:x_1\rightarrow a_0x_1, x_2\rightarrow a_0a_2x_2, x_3\rightarrow f_0x_3$. Other cases can be considered similarly.
\end{proof}
\begin{remark}
By Theorem \ref{r33}, the classical limits of the above actions,
 i.e., the Lie algebra $sl_{m+1}$-actions
 by differentiations on $\mathbb{C}[x_1,x_2,x_3]$ can also be obtained
 as before.
\end{remark}
\section{Structures of $U_q(sl(m+1))$-symmetries on $A_q(n)$}
In this section, we will study the module-algebra structures of $U_q(sl(m+1))$ on $A_q(n)$.

We also consider the module-algebra structures of $U_q(sl(2))$ on $A_q(n)$ first. Let
\begin{eqnarray}
M\stackrel{def}{=}\left[\begin{array}{cccc}
k(x_1)& k(x_2)& \cdots& k(x_n)\\
e(x_1)& e(x_2)& \cdots& e(x_n)\\
f(x_1)& f(x_2)& \cdots& f(x_n)\end{array}\right].
\end{eqnarray}

As usual, we can set
\begin{eqnarray*}
M_k\stackrel{def}{=}\left[\begin{array}{cccc}
k(x_1)& k(x_2)& \cdots& k(x_n)\end{array}\right]= \left[\begin{array}{cccc}
\alpha_1 x_1&\alpha_2 x_2&\cdots&\alpha_n x_n\end{array}\right],
\end{eqnarray*}
where $\alpha_i$ for $i\in\{1,\cdots, n\}$ are non-zero complex numbers. So, every monomial $x_1^{m_1}x_2^{m_2}\cdots x_n^{m_n}\in A_q(n)$ is an eigenvector for $k$ and the
associated eigenvalue $\alpha_1^{m_1}\alpha_2^{m_2}\cdots \alpha_n^{m_n}$ is called the \emph{weight} of this monomial, which will be written as $wt(x_1^{m_1}x_2^{m_2}\cdots x_n^{m_n})=\alpha_1^{m_1}\alpha_2^{m_2}\cdots \alpha_n^{m_n}$.

Let
\begin{eqnarray}
M_{ef}\stackrel{def}{=}\left[\begin{array}{cccc}
e(x_1)& e(x_2)& \cdots & e(x_n)\\
f(x_1)& f(x_2)&\cdots &f(x_n) \end{array}\right].
\end{eqnarray}
Then, we have
\begin{eqnarray*}
wt(M_{ef})\stackrel{def}{=}\left[\begin{array}{cccc}
wt(e(x_1))&wt(e(x_2))& \cdots& wt(e(x_n))\\
wt(f(x_1))& wt(f(x_2))& \cdots& wt(f(x_n)) \end{array}\right]\end{eqnarray*}
\begin{eqnarray*}\bowtie \left[\begin{array}{cccc}
q^2\alpha_1& q^2\alpha_2&\cdots& q^2\alpha_n\\
q^{-2}\alpha_1&q^{-2}\alpha_2&\cdots&q^{-2}\alpha_n\end{array}\right].
\end{eqnarray*}

Set \begin{eqnarray*}
(M)_0=\left[\begin{array}{cccc}
0& 0 &\cdots & 0\\
a_1&a_2&\cdots &a_n\\
b_1&b_2&\cdots &b_n \end{array}\right]_0.
\end{eqnarray*}
Then, we obtain
\begin{eqnarray}\label{e64}
wt((M)_0)
\bowtie \left[\begin{array}{cccc}
0& 0& \cdots & 0\\
q^2\alpha_1 & q^2\alpha_2&\cdots &q^2\alpha_n\\
q^{-2}\alpha_1&q^{-2}\alpha_2&\cdots &q^{-2}\alpha_n \end{array}\right]_0\end{eqnarray}
\begin{eqnarray*}\bowtie \left[\begin{array}{cccc}
0& 0& \cdots & 0\\
1& 1&\cdots&1\\
1& 1 & \cdots &1 \end{array}\right]_0 .
\end{eqnarray*}
An application of $e$ and $f$ to (\ref{e61}) gives the following equalities
\begin{eqnarray}\label{e62}
x_ie(x_j)-q\alpha_ie(x_j)x_i=qx_je(x_i)-\alpha_je(x_i)x_j~~for~~i>j,\end{eqnarray}
\begin{eqnarray}\label{e63}f(x_i)x_j-q\alpha_j^{-1}x_jf(x_i)=qf(x_j)x_i-\alpha_i^{-1}x_if(x_j)~~for~~i>j.\end{eqnarray}

After projecting the equalities above to $(A_q(n))_1$, we obtain
\begin{eqnarray*}
a_j(1-q\alpha_i)x_i=a_i(q-\alpha_j)x_j~~for~~i>j,\\
b_i(1-q\alpha_j^{-1})x_j=b_j(q-\alpha_i^{-1})x_i~~for~~i>j.
\end{eqnarray*}
Therefore, for $i>j$, we obtain
\begin{eqnarray}
&&a_j\neq 0 \Rightarrow \alpha_i=q^{-1},\qquad a_i\neq 0 \Rightarrow \alpha_j=q,\\
&&b_i\neq 0 \Rightarrow \alpha_j=q,\qquad b_j\neq 0 \Rightarrow \alpha_i=q^{-1},
\end{eqnarray}
Then, we have that for any $j\in\{1,\cdots,n\}$,
\begin{eqnarray}
a_j\neq 0 \Rightarrow \alpha_i=q^{-1}~~for~~\forall~~ i>j,~~\alpha_i=q~~for~~\forall~~i<j, \\
b_j\neq 0 \Rightarrow \alpha_i=q^{-1}~~for~~\forall~~ i>j,~~\alpha_i=q~~for~~\forall~~i<j.
\end{eqnarray}
By (\ref{e64}) and using the above equalities, we get
\begin{eqnarray*}
a_j\neq 0 \Rightarrow \alpha_1=q, \cdots, \alpha_{j-1}=q, \alpha_j=q^{-2}, \alpha_{j+1}=q^{-1}, \cdots, \alpha_n=q^{-1},\\
b_j\neq 0 \Rightarrow \alpha_1=q, \cdots, \alpha_{j-1}=q, \alpha_j=q^{2}, \alpha_{j+1}=q^{-1}, \cdots, \alpha_n=q^{-1}.
\end{eqnarray*}
So, there are $2n+1$ cases as follows: $a_j\neq 0$, $a_i=0$ for $i\neq j$ and all $b_i=0$ for any $j\in \{1,\cdots, n\}$;
$b_j\neq 0$, $b_i=0$ for $i\neq j$ and all $a_i=0$ for any $j\in \{1,\cdots, n\}$; $a_j=0$ and $b_j=0$ for any $j\in \{1,\cdots, n\}$.

For the 1-st homogeneous component, since $wt(e(x_i))=q^2wt(x_i)\neq wt(x_i)$, we have $(e(x_i))_1=\sum_{s\neq i}c_{is}x_s$ for some $c_{is}\in \mathbb{C}$. Similarly, we set $(f(x_i))_1=\sum_{s\neq i}d_{is}x_s$ for some $d_{is}\in \mathbb{C}$.

After projecting Equations (\ref{e62})-(\ref{e63}) to $(A_q(n))_2$, we can obtain that for any $i>j$,
\begin{eqnarray*}
\sum_{\substack{s\neq j\\
s<i}}(q-q\alpha_i)c_{js}x_sx_i+(1-q\alpha_i)c_{ji}x_i^2+\sum_{s>i}(1-q^2\alpha_i)c_{js}x_ix_s=\\
\sum_{s<j}(q^2-\alpha_j)c_{is}x_sx_j+(q-\alpha_j)c_{ij}x_j^2+\sum_{\substack{s\neq i\\
s>j}}(q-q\alpha_j)c_{is}x_jx_s,\end{eqnarray*}
\begin{eqnarray*}
\sum_{s<j}(1-q^2\alpha_j^{-1})d_{is}x_sx_j+(1-q\alpha_j^{-1})d_{ij}x_j^2+\sum_{\substack{s>j\\
s\neq i}}(q-q\alpha_j^{-1})d_{is}x_jx_s=\\
\sum_{\substack{s<i\\s\neq j}}(q-q\alpha_i^{-1})d_{js}x_sx_i+(q-\alpha_i^{-1})d_{ji}x_i^2+\sum_{\substack{s>i}}(q^2-\alpha_i^{-1})d_{js}x_ix_s. \end{eqnarray*}
Therefore, we have
\begin{eqnarray*}
c_{js}\neq 0~~(s<i,~~s\neq j)\Rightarrow \alpha_i=1, ~~c_{js}=0~~for~~all~~s\geq i,\end{eqnarray*}
\begin{eqnarray*}
c_{ji}\neq 0\Rightarrow \alpha_i=q^{-1},~~c_{js}=0~~for ~~any~~s\neq i,\end{eqnarray*}
\begin{eqnarray*}
c_{js}\neq 0~~ (s>i)\Rightarrow \alpha_i=q^{-2}, ~~c_{js}=0~~for~~all~~s\leq i,\end{eqnarray*}
\begin{eqnarray*}c_{is}\neq 0 ~~(s<j)\Rightarrow \alpha_j=q^2, ~~c_{is}=0~~for~~all~~s\geq j,\end{eqnarray*}
\begin{eqnarray*}c_{ij}\neq 0 \Rightarrow \alpha_j=q, ~~c_{is}=0~~for~~all~~s\neq j,\end{eqnarray*}
\begin{eqnarray*}c_{is}\neq 0 ~~(s>j,~~s\neq i)\Rightarrow \alpha_j=1, ~~c_{is}=0~~for~~all~~s\leq j,\end{eqnarray*}
\begin{eqnarray*}d_{is}\neq 0 ~~(s<j)\Rightarrow \alpha_j=q^2, ~~d_{is}=0~~for~~all~~s>j,\end{eqnarray*}
\begin{eqnarray*}d_{ij}\neq 0 \Rightarrow \alpha_j=q, ~~d_{is}=0~~for~~all~~s\neq j,\end{eqnarray*}
\begin{eqnarray*}d_{is}\neq 0 ~~(s>j,~~s\neq i)\Rightarrow \alpha_j=1, ~~d_{is}=0~~for~~all~~s\leq j,\end{eqnarray*}
\begin{eqnarray*}d_{js}\neq 0 ~~ (s<i,~~s\neq j) \Rightarrow \alpha_i=1,~~d_{js}=0~~for~~all~~s\geq i,\end{eqnarray*}
\begin{eqnarray*}d_{ji}\neq 0 \Rightarrow \alpha_i=q^{-1},~~d_{js}=0~~for~~all~~s\neq i,\end{eqnarray*}
\begin{eqnarray*}d_{js}\neq 0 ~~(s>i) \Rightarrow \alpha_i=q^{-2},~~d_{js}=0~~for~~all~~s\leq i.
\end{eqnarray*}

Therefore, we have that for any $j\in \{1,\cdots, n\}$,
\begin{eqnarray*}
c_{js}\neq 0 ~~(s>j) \Rightarrow \alpha_1=1,\cdots, \alpha_{j-1}=1,
 \alpha_{j+1}=q^{-2},\cdots,\\ \alpha_{s-1}=q^{-2},\alpha_s=q^{-1},\alpha_{s+1}=1,\cdots, \alpha_n=1,\end{eqnarray*}
\begin{eqnarray*}
c_{js}\neq 0 ~~(s<j) \Rightarrow \alpha_1=1,\cdots, \alpha_{s-1}=1, \alpha_s=q, \alpha_{s+1}=q^{2},\cdots, \\ \alpha_{j-1}=q^{2},\alpha_{j+1}=1,\cdots, \alpha_n=1,\end{eqnarray*}
\begin{eqnarray*}
d_{js}\neq 0 ~~(s>j) \Rightarrow \alpha_1=1,\cdots, \alpha_{j-1}=1, \alpha_{j+1}=q^{-2},\cdots,\\ \alpha_{s-1}=q^{-2},\alpha_s=q^{-1},\alpha_{s+1}=1,\cdots, \alpha_n=1,\end{eqnarray*}
\begin{eqnarray*}
d_{js}\neq 0 ~~(s<j) \Rightarrow \alpha_1=1,\cdots, \alpha_{s-1}=1, \alpha_s=q, \alpha_{s+1}=q^{2},\cdots, \\ \alpha_{j-1}=q^{2},\alpha_{j+1}=1,\cdots, \alpha_n=1.\end{eqnarray*}

Since
$
wt((M_{ef})_1)=\left[\begin{array}{cccc}
q^2\alpha_1 &q^2\alpha_2&\cdots &q^2\alpha_n\\
q^{-2}\alpha_1&q^{-2}\alpha_2&\cdots&q^{-2}\alpha_n\end{array}\right]_1$, we obtain that
for any $j\in \{1,\cdots, n\}$,
\begin{eqnarray*}
c_{js}\neq 0 ~~(s>j) \Rightarrow \alpha_1=1,\cdots, \alpha_{j-1}=1, \alpha_j=q^{-3},
 \alpha_{j+1}=q^{-2},\cdots,\\ \alpha_{s-1}=q^{-2},\alpha_s=q^{-1},\alpha_{s+1}=1,\cdots, \alpha_n=1,\end{eqnarray*}
\begin{eqnarray*}
c_{js}\neq 0 ~~(s<j) \Rightarrow \alpha_1=1,\cdots, \alpha_{s-1}=1, \alpha_s=q, \alpha_{s+1}=q^{2},\cdots, \\ \alpha_{j-1}=q^{2},\alpha_j=q^{-1},\alpha_{j+1}=1,\cdots, \alpha_n=1,\end{eqnarray*}
\begin{eqnarray*}
d_{js}\neq 0 ~~(s>j) \Rightarrow \alpha_1=1,\cdots, \alpha_{j-1}=1, \alpha_j=q,\alpha_{j+1}=q^{-2},\cdots,\\ \alpha_{s-1}=q^{-2},\alpha_s=q^{-1},\alpha_{s+1}=1,\cdots, \alpha_n=1,\end{eqnarray*}
\begin{eqnarray*}
d_{js}\neq 0 ~~(s<j) \Rightarrow \alpha_1=1,\cdots, \alpha_{s-1}=1, \alpha_s=q, \alpha_{s+1}=q^{2},\cdots, \\ \alpha_{j-1}=q^{2},\alpha_j=q^3,\alpha_{j+1}=1,\cdots, \alpha_n=1.\end{eqnarray*}

By the above discussion, we have only the following possibilities for the 1-st homogeneous component:
$c_{ij}\neq 0$ for some $i\neq j$, other $c_{st}$ equal to zero and all $d_{st}=0$; $d_{ij}\neq 0$ for some $i\neq j$, other $d_{st}$ equal to zero and all $c_{st}=0$; $c_{j+1,j}\neq 0$, $d_{j,j+1}\neq 0$ for some $j\in \{1,\cdots, n\}$.

Obviously, if both the 0-th homogeneous component and the 1-th homogeneous component of $M_{ef}$ are nonzero, there are no possibilities except
when $n=3$. But, the case when $n=3$ has been considered in Section 4. Therefore, we assume that $n\geq 4$ in the following paper.

Moreover, there are no possibilities when the 0-th homogeneous component of $M_{ef}$ is 0 and the 1-th homogeneous component of $M_{ef}$ has only one nonzero position.
The reasons are the same as those in \cite{dup/sin3}.

Therefore, we have to consider the following cases\\
 $\left(\left[\begin{array}{cccccc}
0&0 &\cdots &a_i &\cdots & 0\\
0& 0 &\cdots &0 &\cdots &0\end{array}\right]_0, \left[\begin{array}{cccc}
0 & 0& \cdots &0\\
0 &  0 &\cdots&0 \end{array}\right]_1\right)$ for any $i\in \{1,\cdots n\}$,\\
$\left(\left[\begin{array}{cccccc}
0& 0 &\cdots & 0 & \cdots& 0\\
0 & 0 & \cdots &b_i &\cdots & 0\end{array}\right]_0, \left[\begin{array}{cccc}
0 & 0&\cdots & 0\\
0 &0 &\cdots & 0 \end{array}\right]_1\right)$ for any $i\in \{1,\cdots n\}$,\\
$\left(\left[\begin{array}{cccc}
0 &0&\cdots &0\\
0 & 0 &\cdots &0 \end{array}\right]_0, \left[\begin{array}{ccccccc}
0 & 0& \cdots &0 & c_{j+1,j}& \cdots & 0\\
0 & 0 & \cdots &d_{j,j+1} & 0 &\cdots & 0 \end{array}\right]_1\right)$ for any $j\in$$\{1,$$\cdots,$\\$n$$-1\}$,\\ $\left(\left[\begin{array}{cccc}
0 &0&\cdots &0\\
0 & 0 &\cdots &0 \end{array}\right]_0, \left[\begin{array}{cccc}
0 & 0& \cdots &0\\
0& 0 &\cdots & 0 \end{array}\right]_1\right)$.

By the above cases and the discussions in Section 4, we can obtain the following proposition.
\begin{proposition}\label{sss1}
For $n\geq 4$, there are the module-algebra structures of $U_q(sl(2))$ on $A_q(n)$ as follows:\\
(1) $$k(x_i)=\pm x_i,~~e(x_i)=f(x_i)=0,$$ for any $i\in\{1,\cdots, n\}$. All these structures are pairwise nonisomorphic.\\
(2) $$k(x_i)=qx_i~~for~~\forall~~i<j,~~k(x_j)=q^{-2}x_j,~~k(x_i)=q^{-1}x_i~~for~~\forall~~i>j,$$
$$e(x_i)=0~~for ~~\forall~~i\neq j,~~e(x_j)=a_j,$$
$$f(x_i)=a_j^{-1}x_ix_j ~~for~~\forall~~i<j,~~f(x_j)=-qa_j^{-1}x_j^2,$$
$$f(x_i)=-qa_j^{-1}x_jx_i~~~~for~~\forall~~i>j,$$
for any $j\in \{1,\cdots, n\}$ and $a_j \in \mathbb{C}\setminus\{0\}$. If $j$ is fixed, then all these structures are isomorphic to that with
$a_j=1$. \\
(3) $$k(x_i)=qx_i~~for~~\forall~~i<j,~~k(x_j)=q^{2}x_j,~~k(x_i)=q^{-1}x_i~~for~~\forall~~i>j,$$
$$e(x_i)=-qb_j^{-1}x_ix_j ~~for~~\forall~~i<j,~~e(x_j)=-qb_j^{-1}x_j^2,$$
$$e(x_i)=b_j^{-1}x_jx_i~~~~for~~\forall~~i>j,$$
$$f(x_i)=0~~for ~~\forall~~i\neq j,~~f(x_j)=b_j,$$
for any $j\in \{1,\cdots, n\}$ and $b_j \in \mathbb{C}\setminus\{0\}$. If $j$ is fixed, then all these structures are isomorphic to that with
$b_j=1$.\\
(4)  $$k(x_i)=x_i~~for~~\forall~~i<j,~~k(x_j)=qx_j,~~k(x_{j+1})=q^{-1}x_{j+1},~~k(x_i)=x_i~~for~~\forall~~i>j+1,$$
$$e(x_i)=0 ~~for~~\forall~~i\neq {j+1},~~e(x_{j+1})=c_{j+1,j}x_j,$$
$$f(x_i)=0~~for ~~\forall~~i\neq j,~~f(x_j)=c_{j+1,j}^{-1} x_{j+1},$$
for any $j\in \{1,\cdots, n-1\}$ and $c_{j+1,j} \in \mathbb{C}\setminus\{0\}$. If $j$ is fixed, then all these structures are isomorphic to that with $c_{j+1,j}=1$.
\end{proposition}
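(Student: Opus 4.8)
The plan is to treat the statement as an existence-and-normalization result: each of the four families has already been singled out by the weight analysis preceding the proposition (the four surviving configurations of the $0$-th and $1$-st homogeneous components of $M_{ef}$), so what remains is to confirm that each family genuinely defines a $U_q(sl(2))$-module algebra and then to sort out the isomorphism classes.

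First I would invoke Lemma \ref{w8} to reduce every verification to the generators $x_1,\dots,x_n$. The relations (\ref{w2}) hold automatically: in each family every $e(x_i)$ and $f(x_i)$ is a weight vector whose weight is by construction exactly $q^{2}$ (resp. $q^{-2}$) times $wt(x_i)$, so $k$ and $e$ (resp. $f$) satisfy $ke=q^2ek$ (resp. $kf=q^{-2}fk$) on weight vectors. Well-definedness of $k,e,f$ as operators on $A_q(n)$, i.e. compatibility with $x_ix_j=qx_jx_i$, is precisely equations (\ref{e62})--(\ref{e63}), which were used to pin down the admissible forms, so nothing new is needed there. The only relation still to be checked is $[e,f]=\frac{k-k^{-1}}{q-q^{-1}}$, that is $ef(x_i)-fe(x_i)=\frac{\alpha_i-\alpha_i^{-1}}{q-q^{-1}}x_i$ for every $i$.

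Second, I would verify this commutator relation family by family using the twisted Leibniz rules coming from the coproduct, namely $e(uv)=u\,e(v)+e(u)\,k(v)$ and $f(uv)=k^{-1}(u)\,f(v)+f(u)\,v$, together with $x_ix_j=qx_jx_i$. For family (2), for instance, one computes $ef(x_j)=-qa_j^{-1}e(x_j^2)=-qa_j^{-1}a_j(1+q^{-2})x_j=-(q+q^{-1})x_j$ while $fe(x_j)=f(a_j)=0$, matching $\frac{q^{-2}-q^{2}}{q-q^{-1}}x_j$; for $i<j$ one gets $ef(x_i)=a_j^{-1}e(x_ix_j)=x_i$ and $fe(x_i)=0$, matching $\frac{q-q^{-1}}{q-q^{-1}}x_i$; and for $i>j$ the factor $q^{-1}$ from $k(x_i)$ yields $ef(x_i)=-x_i$, matching $\frac{q^{-1}-q}{q-q^{-1}}x_i$. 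Family (3) is the same computation with the roles of $e$ and $f$ (and of the weights $q^{\pm2}$) interchanged, and family (4) is the standard $U_q(sl(2))$-action on the quantum plane spanned by $x_j,x_{j+1}$, extended by zero on the remaining generators. The main obstacle is purely organizational: in families (2) and (3) the map $f$ (resp. $e$) has three branches ($i<j$, $i=j$, $i>j$), and one must track the $q$-powers contributed by $k$ and $k^{-1}$ through the skew-derivation carefully so that the three cases reproduce the three values $q,\;q^{-2},\;q^{-1}$ of $\alpha_i$.

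Finally, for the isomorphism statements I would appeal to Proposition \ref{w3}, which forces every automorphism of $A_q(n)$ to be diagonal, $x_i\mapsto\lambda_ix_i$. For families (2)--(4) the normalizing isomorphism is the diagonal automorphism rescaling only the distinguished generator: e.g. for family (2) the map $\psi:x_j\mapsto a_jx_j$, $x_i\mapsto x_i$ $(i\neq j)$ conjugates the structure with parameter $a_j$ into the one with $a_j=1$, and likewise $x_j\mapsto b_jx_j$ and $x_j\mapsto c_{j+1,j}x_j$ handle (3) and (4). For family (1) non-isomorphism follows because a diagonal automorphism commutes with the diagonal action of $k$, so two structures with different sign patterns in $k(x_i)=\pm x_i$ can never be conjugate; hence all such structures are pairwise non-isomorphic.
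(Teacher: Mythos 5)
Your proposal is correct and takes essentially the same route as the paper: the weight analysis preceding the proposition supplies the candidate forms, Lemma \ref{w8} reduces all verifications to the generators (the paper delegates these generator-level checks of the relation $ef-fe=\frac{k-k^{-1}}{q-q^{-1}}$ and of (\ref{e62})--(\ref{e63}) to ``the discussions in Section 4''), and Proposition \ref{w3} together with diagonal rescalings settles the isomorphism statements, exactly as in the paper's Section 4 lemmas. One small slip: in family (4) the map $x_j\mapsto c_{j+1,j}x_j$ conjugates the normalized structure into the one with parameter $c_{j+1,j}$ rather than conversely (applied in the direction you state it would send parameter $c_{j+1,j}$ to $c_{j+1,j}^{2}$); since isomorphism of structures is symmetric this still proves the claim, but the map in your stated direction should be $x_j\mapsto c_{j+1,j}^{-1}x_j$, or equivalently $x_{j+1}\mapsto c_{j+1,j}x_{j+1}$.
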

\begin{remark}
In Proposition \ref{sss1} we present only the simplest module-algebra structures. It is also complicated to give the solutions of (\ref{e62}) and (\ref{e63}) for all cases. For example, by a very complex computation, we can obtain that in Case $\left(\left[\begin{array}{cccc}
a_1 &0 &\cdots& 0\\
0 & 0 & \cdots& 0\end{array}\right]_0, \left[\begin{array}{cccc}
0 & 0 &\cdots & 0\\
0& 0 &\cdots & 0 \end{array}\right]_1\right)$, all $U_q(sl(2))$-module algebra structures on $A_q(n)$ are given by
\normalsize{
 $$k(x_1)=q^{-2}x_1,~~~k(x_i)=q^{-1}x_i~~for~~\forall~~i>1,$$
 $$e(x_1)=a_1,~~~e(x_i)=0 ~~for~~\forall~~i>1,$$
  $$f(x_1)=-qa_1^{-1}x_1^2,$$
\begin{eqnarray*}
f(x_2)&=&-qa_1^{-1}x_1x_2+\sum_{2<s\leq n}\widehat{v}_{2s2}x_2x_s^2+\sum_{2<s<k<l\leq n}\alpha_{22kl}x_2x_kx_l+\beta_{22}x_2^3,\end{eqnarray*}
\begin{eqnarray*}
f(x_i)&=&-qa_1^{-1}x_1x_i+(3)_q\beta_{22}x_2^2x_i-\sum_{2<s<i\leq n}q^{-1}(3)_q\widehat{v_{2s2}}x_s^2x_i\\
&&+\frac{(2)_q}{(3)_q}\alpha_{n2in}x_2x_i^2+\sum_{2<i<t\leq n}\widehat{v_{2t2}}x_ix_t^2-\sum_{2<s<i<n}q^{-1}(2)_q\alpha_{22si}\\
&&\cdot x_sx_i^2+\frac{q}{(2)_q}\widehat{v_{nn2}}x_2x_ix_n+
\sum_{2<k<i<n}\alpha_{n2kn}x_2x_kx_i\\
&&+\sum_{2<i<k<n}\frac{q}{(3)_q}\alpha_{n2kn}x_2x_ix_k-\sum_{2<s<i<k\leq n}\alpha_{22sk}x_sx_ix_k\\
&&+
\sum_{2<i<k<l\leq n}\alpha_{22kl}x_ix_kx_l-\sum_{2<s<k<i<n}q^{-1}(3)_q\alpha_{22sk}x_sx_kx_i\\
&&-q^{-1}\widehat{v_{2i2}}x_i^3,
\end{eqnarray*}
where $2<i<n$,
\begin{eqnarray*}
f(x_n)&=&-qa_1^{-1}x_1x_n+(3)_q\beta_{22}x_2^2x_n-\sum_{2<s<n}q^{-1}(3)_q\widehat{v_{2s2}}x_s^2x_n\\
\\&&+\widehat{v_{nn2}}x_2x_n^2-\sum_{2<k<n}q^{-1}(2)_q\alpha_{22kn}x_kx_n^2+\sum_{2<k< n}\alpha_{n2kn}x_2x_kx_n\\
&&-\sum_{2<s<k<n}q^{-1}(3)_q\alpha_{22sk}x_sx_kx_n-q^{-1}\widehat{v_{2n2}}x_n^3,
\end{eqnarray*}}
where $a_1\in \mathbb{C}\setminus\{0\}$ and $\widehat{v_{2i2}}$, $\alpha_{22kl}$, $\beta_{22}$, $\widehat{v_{nn2}}$, $\alpha_{n2kn}\in \mathbb{C}$.
\end{remark}

Let us denote the module-algebra structures of Case (1), those in Case (2), Case (3) and Case (4) in Proposition \ref{sss1} by $D$, $A_j$, $B_j$ and $C_j$ respectively. For determining the module-algebra structures of $U_q(sl(3))$ on $A_q(n)$, we only need to check whether (\ref{e11})-(\ref{e12})
hold for any $u\in\{x_1,\cdots,x_n\}$. For convenience, we introduce a notation: if the actions of $k_s$, $e_s$, $f_s$ are of the type $A_i$ and the actions of $k_t$, $e_t$, $f_t$ are of the type $B_j$, they determine a module-algebra structure of
$U_q(sl(3))$ on $A_q(n)$ for $s=1$, $t=2$ or $s=2$, $t=1$, then we say $A_i$ and $B_j$ are compatible. By some computations, we can obtain that
$D$ and $D$ are compatible, $A_i$ and $B_j$ are compatible if and only if $i=1$ and $j=n$, $A_i$ and $C_j$ are compatible if and only if $i=j$,
$B_i$ and $C_j$ are compatible if and only if $j=i+1$, $C_i$ and $C_j$ are compatible if and only if $i=j+1$ or $i=j-1$, and any two other cases are not compatible.

Therefore, by the above discussion, similar to that in Section 4, we can obtain the following proposition.
\begin{proposition}
For $n\geq 4$, there are the module-algebra structures of $U_q(sl(3))$ on $A_q(n)$ as follows
\begin{eqnarray}
&&\xymatrix{{D}\ar@{-}[r]& {D}},\end{eqnarray}
\begin{eqnarray}
\label{r26}&&\xymatrix{ {A_1}\ar@{-}[ddrrrrr]\ar@{-}[d]& {A_2}\ar@{-}[d]& \cdots & {A_{n-2}}\ar@{-}[d] & {A_{n-1}}\ar@{-}[d] & \\
{C_1}\ar@{-}[r]\ar@{-}[dr]&{C_2}\ar@{-}[dr]\ar@{-}[r]&{\cdots}\ar@{-}[dr]\ar@{-}[r]&{C_{n-2}}\ar@{-}[dr]\ar@{-}[r]& {C_{n-1}}\ar@{-}[dr]&\\
& {B_2}& {\cdots}& {B_{n-2}} & {B_{n-1}} & {B_n} }.
\end{eqnarray}
Here, every two adjacent vertices determines two classes of module-algebra structures of $U_q(sl(3))$ on $A_q(n)$. For example, $\xymatrix{ {A_1}\ar@{-}[r]& {C_1}}$ corresponds to the following two kinds of module-algebra structures of $U_q(sl(3))$ on $A_q(n)$:
one is that the actions of $k_1$, $e_1$, $f_1$ are of type $A_1$ and the actions of $k_2$, $e_2$, $f_2$ are of type $C_1$; the other is that  the actions of $k_1$, $e_1$, $f_1$ are of type $C_1$ and the actions of $k_2$, $e_2$, $f_2$ are of type $A_1$.
\end{proposition}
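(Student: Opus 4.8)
The plan is to reduce the problem to a finite compatibility check among the four families of $U_q(sl(2))$-structures listed in Proposition \ref{sss1}. I would first use the fact that $U_q(sl(3))$ contains the two sub-Hopf algebras $A=\langle k_1,e_1,f_1\rangle$ and $B=\langle k_2,e_2,f_2\rangle$, each isomorphic to $U_q(sl(2))$. Any $U_q(sl(3))$-module algebra structure on $A_q(n)$ restricts to a pair of $U_q(sl(2))$-structures, one on $A$ and one on $B$; conversely, a pair of $U_q(sl(2))$-structures assembles into a $U_q(sl(3))$-structure exactly when the cross relations (\ref{e11})--(\ref{e12}) hold, because the relations internal to each copy of $sl(2)$ are already satisfied and the Leibniz rules for $e_i,f_i,k_i$ are inherited from the two given structures. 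By Lemma \ref{w8} it is enough to verify (\ref{e11})--(\ref{e12}) on the generators $x_1,\dots,x_n$. Taking the two structures from the simplest types $D$, $A_j$, $B_j$, $C_j$ of Proposition \ref{sss1}, the problem becomes: decide, for each ordered pair of types, whether these cross relations hold.

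I would carry out the compatibility check in two stages. In the first stage I would exploit the weight conditions in (\ref{e11}) and (\ref{r10}): writing $k_1(x_i)=\alpha_i x_i$ and $k_2(x_i)=\beta_i x_i$, the relation $k_1 e_2 = q^{-1}e_2 k_1$ forces every monomial occurring in $e_2(x_i)$ to have $k_1$-weight $q^{-1}\alpha_i$, and similarly for $f_2$, $e_1$, $f_1$. Since each type has an explicit diagonal $k$-action and short expressions for $e$ and $f$, these constraints can be read off directly and already confine the distinguished indices of the two types to a few relative positions (for instance, $A_i$ paired against $B_j$ survives the weight test only when $i<j$). This eliminates all but a handful of the roughly $n^2$ ordered pairs.

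In the second stage I would impose the remaining cross relations on the surviving candidates: the commutators (\ref{r11}), $e_1 f_2 = f_2 e_1$ and $e_2 f_1 = f_1 e_2$, together with the four quantum Serre relations (\ref{r12})--(\ref{e12}). This is where the exact index alignments are pinned down. For example, for $A_i$ paired with $B_j$ and $i<j$, evaluating $e_2 f_1 - f_1 e_2$ on a generator $x_\ell$ lying outside the index range delimited by $i$ and $j$ produces a nonzero higher-degree term, so compatibility collapses unless no such $x_\ell$ exists; this forces the boundary values $i=1$ and $j=n$. Analogous computations yield $A_i$ compatible with $C_j$ iff $i=j$, $B_i$ compatible with $C_j$ iff $i=j+1$, and $C_i$ compatible with $C_j$ iff $|i-j|=1$, while $D$ is compatible only with $D$ and every other pair fails either a commutator or a Serre relation. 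Each verification is a direct computation on the generators, closely parallel to those of Lemmas \ref{r20}--\ref{SS2} for $n=3$ and kept short by the fact that $e$ and $f$ send each generator to a single monomial or a two-term sum.

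Finally I would assemble the results into the stated diagrams: each compatible unordered pair of types becomes an edge, and, since the roles of $A$ and $B$ are interchangeable, every edge records two module-algebra structures. The pair $D$--$D$ gives the first diagram, and the compatibilities $A_j\sim C_j$, $A_1\sim B_n$, $C_j\sim C_{j+1}$, $C_j\sim B_{j+1}$ reproduce precisely the interlocking graph in (\ref{r26}). I expect the second stage to be the main obstacle, the delicate point being that the commutator and Serre relations must be tested on \emph{every} generator: it is exactly the generators lying beyond the two distinguished indices that obstruct compatibility and thereby enforce the boundary conditions $i=1$, $j=n$ and their analogues.
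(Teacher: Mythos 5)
Your proposal follows essentially the same route as the paper: the paper likewise restricts a $U_q(sl(3))$-structure to the two $U_q(sl(2))$ sub-Hopf algebras, reduces the cross relations (\ref{e11})--(\ref{e12}) to the generators $x_1,\dots,x_n$ via Lemma \ref{w8}, checks compatibility for pairs drawn from the types $D$, $A_i$, $B_i$, $C_i$ of Proposition \ref{sss1}, and obtains exactly your compatibility table ($D$ with $D$; $A_i$ with $B_j$ iff $i=1$, $j=n$; $A_i$ with $C_j$ iff $i=j$; $B_i$ with $C_j$ iff $j=i+1$; $C_i$ with $C_j$ iff $|i-j|=1$), which is then encoded in the diagrams. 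Your two-stage organization (weight constraints first, then the commutator and Serre relations) simply makes explicit the computations the paper compresses into ``by some computations.''
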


Then, for determining the module-algebra structures of $U_q(sl(m+1))$ on $A_q(n)$, we have to find the pairs of vertices which are not adjacent in (\ref{r26}) and satisfy the following relation: $k_ie_j(x_s)=e_jk_i(x_s)$, $k_je_i(x_s)=e_ik_j(x_s)$, $k_if_j(x_s)=f_jk_i(x_s)$, $k_jf_i(x_s)=f_ik_j(x_s)$, $e_ie_j(x_s)=e_je_i(x_i)$, $e_if_j(x_s)=f_je_i(x_s)$,
$f_if_j(x_s)=f_jf_i(x_s)$ where one vertex corresponds to the actions of $k_i$, $e_i$ and $f_i$ and the other vertex corresponds to the actions of $k_j$, $e_j$ and $f_j$, $s\in\{1,\cdots, n\}$. It is easy to check that $A_i$ and $C_j$ satisfy the above relations if and only if $i<j$ or $i>j+1$, $B_i$ and $C_j$ satisfy the above relations if and only if $i<j$ or $i>j+1$, $C_i$ and $C_j$ satisfy the above relations if and only if $i\neq j+1$ or $j\neq i+1$, and any other two vertices do not satisfy the above relations.

Therefore, we obtain the following theorem.
\begin{theorem}
For $m\geq 3$, $n\geq 4$, there are the module-algebra structures of $U_q(sl(m+1))$ on $A_q(n)$ as follows:
\begin{eqnarray}
\label{r30}&&\underbrace{\xymatrix{{D}\ar@{-}[r]& {D}\ar@{-}[r]& {\cdots}\ar@{-}[r]& {D}}}_{m},\\
&&\xymatrix{{A_i}\ar@{-}[r]& {C_i}\ar@{-}[r]& {\cdots}\ar@{-}[r]& {C_{i+m-2}}},\\
&&\xymatrix{{C_i}\ar@{-}[r]& {C_{i+1}}\ar@{-}[r]& {\cdots}\ar@{-}[r]& {C_{i+m-2}}\ar@{-}[r]&B_{i+m-1}},\\
\label{r31}&&\xymatrix{{C_i}\ar@{-}[r]& {C_{i+1}}\ar@{-}[r]& {\cdots}\ar@{-}[r]& {C_{i+m-1}}},
\end{eqnarray}
\begin{eqnarray}
\xymatrix{{A_1}\ar@{-}[r]& {B_{n}}\ar@{-}[r]& {C_{n-1}}\ar@{-}[r]&{\cdots}\ar@{-}[r]& {C_{n+2-m}}},
\end{eqnarray}
where $n+2-m>1$,
\begin{eqnarray}\label{r27}
\xymatrix{{B_n}\ar@{-}[r]& {A_{1}}\ar@{-}[r]& {C_{1}}\ar@{-}[r]&{\cdots}\ar@{-}[r]& {C_{m-2}}},
\end{eqnarray}
where $m-2< n-1$.

Here, every such diagram corresponds to two classes of the module-algebra structures of $U_q(sl(m+1))$ on $A_q(n)$. 
For instance, there are the two module-algebra structures of $U_q(sl(m+1))$ on $A_q(n)$ corresponding to (\ref{r27}): 
the one is that the actions of $k_1$, $e_1$, $f_1$ are of the type $B_n$, 
those of $k_2$, $e_2$, $f_2$ are of the type $A_1$ and those of $k_i$, $e_i$, $f_i$ are of the type $C_{i-2}$ for any $3\leq i\leq m$. 
The other is that the actions of $k_i$, $e_i$, $f_i$ are of the type $C_{m-1-i}$ for any $1\leq i\leq m-2$, those of $k_{m-1}$, $e_{m-1}$, $f_{m-1}$ are of the type $A_1$ and those of $k_m$, $e_m$, $f_m$ are of the type $B_n$.
\end{theorem}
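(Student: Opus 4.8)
The plan is to reduce the classification of $U_q(sl(m+1))$-module algebra structures on $A_q(n)$ to a combinatorial problem on the Dynkin diagram of $sl(m+1)$, exactly mirroring the strategy of Section 4 but now using the building blocks established in Proposition~\ref{sss1}. Each vertex $i$ of the Dynkin diagram carries a Hopf subalgebra isomorphic to $U_q(sl(2))$, and by Proposition~\ref{sss1} the action attached to vertex $i$ must be one of the four types $D$, $A_{j}$, $B_{j}$, $C_{j}$. Two adjacent vertices generate a Hopf subalgebra isomorphic to $U_q(sl(3))$, so the pair of types they carry must be \emph{compatible} in the sense already worked out: $D$ is compatible only with $D$; $A_i$ with $B_j$ iff $i=1,j=n$; $A_i$ with $C_j$ iff $i=j$; $B_i$ with $C_j$ iff $j=i+1$; and $C_i$ with $C_j$ iff $|i-j|=1$. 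These adjacency constraints are precisely the edges of the diagram~(\ref{r26}).

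First I would dispose of the all-$D$ case, which yields~(\ref{r30}) and is checked directly since $e_i$, $f_i$ all act as zero. For the genuinely nontrivial structures, the key observation is that by Lemma~\ref{w8} it suffices to verify the defining relations on the generators $x_1,\dots,x_n$, and that \emph{non-adjacent} vertices $i,j$ (with $|i-j|\geq 2$, so $a_{ij}=0$) must satisfy the commuting relations $k_ie_j=e_jk_i$, $e_ie_j=e_je_i$, $e_if_j=f_je_i$, $f_if_j=f_jf_i$ and their counterparts. I would compute exactly which pairs of types satisfy these commutation conditions: the stated result is that $A_i$ and $C_j$ commute iff $i<j$ or $i>j+1$, $B_i$ and $C_j$ commute iff $i<j$ or $i>j+1$, and $C_i,C_j$ commute iff $|i-j|\neq 1$. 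This is the second, weaker compatibility relation, governing non-edges.

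The heart of the argument is then purely combinatorial: I must enumerate all labelings of the path $1\!-\!2\!-\!\cdots\!-\!m$ by the types $\{A_j,B_j,C_j,D\}$ such that every edge is a compatible pair and every non-edge is a commuting pair, excluding the all-$D$ labeling already handled. Since no type other than $D$ may decorate two distinct vertices simultaneously (the relation~(\ref{w2}) on $x_1,\dots,x_n$ would fail), every nontrivial labeling uses distinct non-$D$ types and, reading along the path, traces out a walk in the graph~(\ref{r26}). The adjacency constraints force this walk to be a chain of consecutive $C_j$'s, optionally capped by an $A_1$ or a $B_n$ at the appropriate end, matching the paths that appear in the $A_i\!-\!C_i\!-\!\cdots$, $C_i\!-\!\cdots\!-\!B_{i+m-1}$, $C_i\!-\!\cdots\!-\!C_{i+m-1}$, and the two exceptional $A_1\!-\!B_n\!-\!C_{n-1}\!-\!\cdots$ families listed in the theorem. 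The side conditions $n+2-m>1$ and $m-2<n-1$ are exactly the index-range requirements that keep all the $C_j$ subscripts within $\{1,\dots,n-1\}$.

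The main obstacle I anticipate is the bookkeeping for the non-adjacency (commuting) conditions: unlike the edge conditions, which come packaged from the $U_q(sl(3))$ classification, the commuting relations between far-apart vertices must be checked afresh, and one must confirm that a walk respecting only the edge constraints does not secretly violate a commutation condition between two non-consecutive $C_j$'s or between the terminal $A_1$/$B_n$ and an interior $C_j$. I would carry this out by first pinning down the commuting pairs as above, then arguing that along any admissible path the subscripts of the $C_j$'s are strictly consecutive, so that any two non-adjacent vertices automatically satisfy $|i-j|\geq 2$ in the subscript and hence commute; the only subtlety is the interaction of the caps $A_1$, $B_n$ with the $C$-chain, which is exactly why the two exceptional families in~(\ref{r27}) and its mirror appear and why their range inequalities are needed. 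Once the admissible paths are enumerated, each is checked to genuinely define a module-algebra structure via Lemma~\ref{w8}, and the description of the two structures per diagram (reading the path forwards versus backwards) follows from the left--right symmetry of the Dynkin diagram, completing the proof.
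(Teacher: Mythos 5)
Your proposal is correct and follows essentially the same route as the paper: the paper likewise obtains this theorem directly from the four building blocks of Proposition \ref{sss1}, the $U_q(sl(3))$ edge-compatibility diagram (\ref{r26}), and the separately verified commutation conditions between non-adjacent vertices, then enumerates the admissible labelings of the Dynkin path, with the two structures per diagram arising from reading the path in either direction. The only blemish, your relation ``$B_i$ compatible with $C_j$ iff $j=i+1$'', is inherited from a typo in the paper's own text; the diagram (\ref{r26}) and the families in the theorem (e.g.\ $C_{i+m-2}$ adjacent to $B_{i+m-1}$) show the intended relation is $i=j+1$, and your enumeration would have to use that corrected form.
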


\begin{remark}
When $m=n-1$ and the indexes of the vertices of the Dynkin diagram are given $1$, $\cdots$, $n-1$ from the left to the right, the actions corresponds to (\ref{r31}), i.e., $$\xymatrix{{C_1}\ar@{-}[r]& {C_{2}}\ar@{-}[r]& {\cdots}\ar@{-}[r]& {C_{n-1}}},$$ is the case discussed in \cite{hu2000}. 
In addition, we are sure that when $m\geq n+1$, all the module-algebra structures of $U_q(sl(m+1))$ on $A_q(n)$ are of the type in
(\ref{r30}), since there are no paths whose length is larger than $n+1$ and any two vertices which are not adjacent in this path have no edge connecting them in (\ref{r26}). The detailed proof may be similar to that in Section 4. 
Moreover, the module-algebra structures of the quantum enveloping algebras corresponding to the other semisimple Lie algebras on $A_q(n)$ can be considered in the same way.
\end{remark}

{\bf Acknowledgments}.
{\em The first author (S.D.) is grateful to L.~Carbone and E.~Karolinsky for
fruitful discussions.
The second and third authors express their
gratitude to the support from the projects of the National Natural Science Foundation of 
China (No.11271318, No.11171296 and No. J1210038) and the Specialized Research Fund for the Doctoral Program of Higher Education of China (No. 20110101110010)}.

\end{document}